\documentclass[11pt]{article}
\usepackage[T1]{fontenc}
\usepackage{geometry}
\geometry{verbose,tmargin=3cm,bmargin=3cm,lmargin=2.54cm,rmargin=2.54cm}
\usepackage{color}
\usepackage{float}
\usepackage{amsmath}
\usepackage{amssymb}
\usepackage{amsthm}
\usepackage{mathrsfs}
\usepackage{setspace}
\usepackage[all,cmtip]{xy}
\usepackage{tikz}
\usepackage{tikz-cd}
\usepackage{enumerate}
\usepackage{mathtools}
\usepackage{lipsum}
\usepackage{url}

\newtagform{roman}[]()

\newcommand{\Z}{{\mathbb{Z}}}
\newcommand{\Q}{{\mathbb{Q}}}
\newcommand{\R}{{\mathbb{R}}}
\newcommand{\C}{{\mathbb{C}}}

\newcommand{\fm}{{\mathfrak m}}
\newcommand{\fn}{{\mathfrak n}}
\newcommand{\fr}{{\mathfrak r}}
\newcommand{\ft}{{\mathfrak t}}
\newcommand{\fd}{{\mathfrak d}}
\newcommand{\fp}{{\mathfrak p}}
\newcommand{\fP}{{\mathfrak P}}
\newcommand{\fq}{{\mathfrak q}}
\newcommand{\fu}{{\mathfrak u}}
\newcommand{\fv}{{\mathfrak v}}

\newcommand{\cO}{{\mathscr O}}
\newcommand{\cM}{{\mathscr M}}
\newcommand{\cF}{{\mathscr F}}
\newcommand{\cA}{{\mathscr A}}
\newcommand{\cC}{{\mathscr C}}
\newcommand{\cD}{{\mathscr D}}
\newcommand{\cL}{{\mathscr L}}
\newcommand{\cT}{{\mathcal T}}
\newcommand{\cS}{{\mathcal S}}
\newcommand{\cP}{{\mathscr P}}
\newcommand{\ep}{{\varepsilon}}
\newcommand{\GL}{{\mathrm {GL}}}

\newcommand{\Id}{{\mathrm {Id}}}

\newcommand{\Cl}{{\mathrm {Cl}}}
\newcommand{\rN}{{\mathrm {N}}}

\newtheorem{thm}{Theorem}[section]
\newtheorem{prop}[thm]{Proposition}
\newtheorem{conj}[thm]{Conjecture}

\newtheorem{lem}[thm]{Lemma}
\newtheorem{rem}[thm]{Remark}

\theoremstyle{definition}

\newtheorem{lemma}{Lemma}

\numberwithin{equation}{section}

\begin{document}

\large{\centerline{\textbf{Log-free zero density estimates for automorphic $L$-functions}}}

\vspace{5mm}

\large{\centerline{Chen An}}
\vspace{3mm}
\normalsize{}
\begin{abstract}
We prove a log-free zero density estimate for automorphic $L$-functions defined over a number field $k$. This work generalizes and sharpens the method of pseudo-characters and the large sieve used earlier by Kowalski and Michel. As applications, we demonstrate for a particular family of number fields of degree $n$ over $k$ (for any $n$) that an effective Chebotarev density theorem and a bound on $\ell$-torsion in class groups hold for almost all fields in the family.
\end{abstract}

\begin{section}{Introduction}\label{intro}

Many problems in number theory require an estimate for the number of zeros of an $L$-function inside the critical strip, or in a region near the line $\Re(s)=1$. Log-free zero density estimates for Dirichlet $L$-functions near the line $\Re(s)=1$ were established by Gallagher \cite{Gal70} using Turan's power sum method, and by Selberg \cite{Sel72} using the concept of a pseudo-character (see also Jutila \cite{Jut78}). For automorphic $L$-functions, Kowalski and Michel \cite{KM02} gave a log-free zero density estimate for families of such $L$-functions associated to a set of automorphic representations over $\Q$. Improved estimates in this context have recently been given by Lemke Oliver and Thorner \cite{LT15}, Brumley, Thorner, and Zaman \cite{TZ18}, and Thorner and Zaman \cite{TZ19}. In this paper, we generalize and sharpen the method of Kowalski and Michel, applied to automorphic $L$-functions over a number field $k$.

Let $k$ be a number field and let an integer $n$ be fixed. For each integer $q \ge 1$, let $S(q)$ be a set of irreducible cuspidal automorphic representations of $\GL_n(\mathbb{A}_k)$. Let $A>0$ be such that $\mathrm{Cond}(f) \le q^A$ for every $f \in S(q)$ and for all $q \ge 1$. (See \cite{IS00} or (\ref{condf}) for the definition of $\mathrm{Cond}(f)$, the analytic conductor associated to $f$.) There exists $d>0$ such that $|S(q)| \ll q^d$ for all $q \ge 1$; we can in fact choose $d=2nA+\ep$ for any $\ep>0$; see Remark \ref{autrepbd}.
For $\alpha <1$, $T \ge 0$, we define the region of interest $M(\alpha,T)$ as
\begin{equation}\label{mat}
M(\alpha,T)=\{z \in \C: \alpha \le \Re(z) \le 1, \ |\Im(z)| \le T\}.
\end{equation}
For any cuspidal automorphic representation $f$ of $\GL_n(\mathbb{A}_k)$ with associated automorphic $L$-function $L(f,s)$, we define the zero counting function $N(f;\alpha,T)$ as 
\begin{equation*}
N(f;\alpha,T)=| \{ \rho \in M(\alpha,T): L(f,\rho)=0 \} |.
\end{equation*}
Our main theorem is a log-free zero density estimate for automorphic $L$-functions associated to automorphic representations in $(S(q))_{q \ge 1}$ over $k$.

\begin{thm}\label{main}
Fix a number field $k$ with $n_k=[k:\Q]$ and a set $(S(q))_{q \ge 1}$ of irreducible cuspidal automorphic representations of $\GL_n(\mathbb{A}_k)$ with associated data $A,d$ as above. Assume that the Ramanujan-Petersson conjecture and a uniform bound on Langlands parameters at Archimedean places hold for any $f \in (S(q))_{q \ge 1}$. Let $\alpha \ge \frac34$ and $T \ge 2$. Then for all $q \ge 1$ and for any $\ep>0$,
\begin{equation}\label{mainbd}
\sum_{f \in S(q)} N(f;\alpha,T) \ll_{\ep} (q^{c_1+\ep}T^{c_2+\ep})^{1-\alpha},
\end{equation}
where we may take
\begin{equation*}
c_1=2d+4nA+\frac{A}{2}+1, \ \ c_2=\frac{nn_k}{2}+3.
\end{equation*}
\end{thm}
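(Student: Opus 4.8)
The plan is to carry out the mollifier/pseudo-character method of Selberg and Kowalski--Michel in the number-field setting. Fix $q$ and $f\in S(q)$, write $L(f,s)=\sum_{\mathfrak a}\lambda_f(\mathfrak a)N(\mathfrak a)^{-s}$, and let $\lambda_f^{-1}$ be the Dirichlet-convolution inverse of $\lambda_f$ on integral ideals; the Ramanujan--Petersson hypothesis gives $|\lambda_f(\mathfrak a)|+|\lambda_f^{-1}(\mathfrak a)|\ll_\ep N(\mathfrak a)^\ep$. For parameters $1\le Y\le Z$ to be chosen, form the mollifier $M_f(s)=\sum_{N(\mathfrak d)\le Y}\lambda_f^{-1}(\mathfrak d)N(\mathfrak d)^{-s}$ (or Selberg's smoothly interpolated variant, supported on $N(\mathfrak d)\le Y^2$ with the tail confined to a short dyadic range), so that $L(f,s)M_f(s)=1+\sum_{N(\mathfrak a)>Y}b_f(\mathfrak a)N(\mathfrak a)^{-s}$ with $b_f=\lambda_f*(\lambda_f^{-1}\mathbf 1_{N(\cdot)\le Y})$ and $|b_f(\mathfrak a)|\ll_\ep N(\mathfrak a)^\ep$. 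The first step is a zero-detection identity: since $L(f,s)$ is entire, a smoothed Perron formula gives, for any zero $\rho=\beta+i\gamma$ of $L(f,s)$,
\[
e^{-1/Z}+\sum_{N(\mathfrak a)>Y}b_f(\mathfrak a)N(\mathfrak a)^{-\rho}e^{-N(\mathfrak a)/Z}\;=\;\frac{1}{2\pi i}\int_{(\frac12-\beta)}L(f,s+\rho)M_f(s+\rho)\,\Gamma(s)\,Z^s\,ds,
\]
the only residue crossed being that of $\Gamma$ at $s=0$, which equals $L(f,\rho)M_f(\rho)=0$. On the new line $L(f,\cdot)$ sits on $\Re=\tfrac12$, so by the convexity bound $|L(f,\tfrac12+it)|\ll_\ep(\mathrm{Cond}(f)(|t|+2)^{nn_k})^{1/4+\ep}\ll q^{A/4+\ep}(|t|+2)^{nn_k/4+\ep}$, the bound $|M_f(\tfrac12+it)|\ll Y^{1/2+\ep}$, and the decay of $\Gamma$, the right-hand side is $\ll_\ep Z^{1/2-\beta}Y^{1/2}q^{A/4}T^{nn_k/4}(qTZ)^\ep$. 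Hence, provided $Z$ is chosen with $Z^{\beta-1/2}\gg Y^{1/2}q^{A/4}T^{nn_k/4}(qT)^\ep$ (a bounded power of $q,T,Y$, since $\beta\ge\alpha\ge\tfrac34$), every zero with $\Re\rho=\beta$, $|\Im\rho|\le T$ satisfies $1\ll|D_f(\rho)|$ with $D_f(s)=\sum_{Y<N(\mathfrak a)\le Z^{1+\ep}}b_f(\mathfrak a)N(\mathfrak a)^{-s}e^{-N(\mathfrak a)/Z}$. This is the step where the exponents $A/2$ and $nn_k/2$ in $c_1,c_2$ enter, as the square of the convexity factor.

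The next step converts this into a zero count. I would split $[\alpha,1]$ into $O(1)$ subintervals; on each one (with $\beta\asymp\beta_0$) one chooses $Y,Z$ optimally, so that summing over the $O(1)$ ranges costs no logarithm --- this, together with Gallagher's lemma used to pass from a well-spaced set of zeros (one per unit interval of $\gamma$, per $f$) to the integral $\int_{-T-1}^{T+1}|D_f(\tfrac12+it)|^2\,dt$ without the separation loss $\log(q(T+2))$, is the mechanism that keeps the estimate log-free. For fixed $\beta_0$ one is reduced to bounding $\sum_{f\in S(q)}\sum_{\gamma\ \mathrm{w.s.},\,|\gamma|\le T}|D_f(\beta_0+i\gamma)|^2$, whose coefficients are supported on $N(\mathfrak a)>Y$ with $\ell^2$-norm squared $\ll_\ep Y^{1-2\beta_0+\ep}$ (using $\beta_0>\tfrac12$); this factor, decreasing in $\beta_0$, is what produces the exponent $1-\alpha$. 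The $f$-dependence of the coefficients, through both $\lambda_f$ and $\lambda_f^{-1}$, is removed by writing $b_f=\lambda_f*(\lambda_f^{-1}\mathbf 1_{N(\cdot)\le Y})$ and separating the short $\lambda_f^{-1}$-sum from the $\lambda_f$-sum by Cauchy--Schwarz --- an operation costing only $(qT)^\ep$ by Ramanujan--Petersson (and tighter still in Selberg's construction, where the $\lambda_f^{-1}$-part lives in a short range).

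The decisive input is then a large-sieve/large-values estimate for the family $\{(f,t):f\in S(q),\ |t|\le T\}$, of the shape
\[
\sum_{f\in S(q)}\int_{-T}^{T}\Bigl|\sum_{N(\mathfrak e)\le E}c_{\mathfrak e}\,\lambda_f(\mathfrak e)\,N(\mathfrak e)^{-it}\Bigr|^2dt\ \ll_\ep\ \bigl(q^{\kappa_1+\ep}T^{\kappa_2+\ep}+E^{1+\ep}\bigr)\sum_{N(\mathfrak e)\le E}|c_{\mathfrak e}|^2,
\]
together with the Halász--Montgomery refinement in which the $E$-term acquires an extra factor of the $\ell^2$-norm --- this refinement is essential to control zeros with $\beta$ very close to $1$. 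I would prove it by duality: expanding the square reduces matters to $\sum_f\lambda_f(\mathfrak m)\overline{\lambda_f(\mathfrak n)}$, whose diagonal contributes $|S(q)|\,T\ll q^dT$ and whose off-diagonal is handled by the approximate functional equation and the analytic properties (holomorphy off $s=1$, functional equation, conductor $\mathrm{Cond}(f\times\tilde f)\le\mathrm{Cond}(f)^{2n}\le q^{2nA}$, degree $n^2n_k$ over $\Q$) of the Rankin--Selberg $L$-function $L(f\times\tilde f,s)$; this is the origin of the exponents $2d$, $4nA$ in $c_1$ and of the $+3$ in $c_2$ (the genuine $T$-dependence of the large sieve and of the integral range). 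Assembling everything --- inserting $Z\asymp(Y^{1/2}q^{A/4}T^{nn_k/4})^{1/(\beta_0-1/2)}$, optimizing over $Y$ (a power of $qT$), and bounding the $O(1)$ ranges by the worst case $\beta_0=\alpha$ --- yields \eqref{mainbd} with the stated $c_1,c_2$.

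I expect the \textbf{main obstacle} to be exactly this large-sieve/large-values inequality in the general number-field setting: admissible exponents demand uniform control of the Rankin--Selberg conductor and of the archimedean local factors (the source of the factor $n_k$ in $c_2$), and the Halász--Montgomery iteration together with the counting of integral ideals, rather than rational integers, must be organized so that no spurious logarithm survives at any stage --- which is the precise point where the argument has to sharpen that of Kowalski--Michel. (Non-vanishing of $L(f,s)$ on $\Re(s)=1$, due to Jacquet--Shalika, is consistent with the statement at $\alpha=1$ but is not needed; the Ramanujan--Petersson hypothesis is used only to bound $|\lambda_f(\mathfrak a)|$, $|\lambda_f^{-1}(\mathfrak a)|$, $|b_f(\mathfrak a)|$.)
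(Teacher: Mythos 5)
You take a genuinely different route from the paper --- a Montgomery--Huxley style argument (truncated-inverse mollifier, zero detector $LM=1+\text{tail}$, hybrid mean value plus a Hal\'asz--Montgomery refinement), whereas the paper runs the Selberg/Kowalski--Michel pseudo-character method --- and the decisive input of your route is both unproved and, as sketched, unobtainable. The family $S(q)$ carries no structure beyond $|S(q)|\ll q^d$ and $\mathrm{Cond}(f)\le q^A$, so after expanding the square there is no cancellation available in the $f$-sum $\sum_f\lambda_f(\mathfrak m)\overline{\lambda_f(\mathfrak n)}$, and the off-diagonal pairs $\mathfrak m\neq\mathfrak n$ (same $f$) are not governed by $L(f\times\widetilde f,s)$ at all; what the duality/mean-value argument actually yields in this generality is $\ll_\ep q^{d}(T+E)(qTE)^{\ep}\sum|c_{\mathfrak e}|^2$, with the length term multiplied by the family size. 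With that, the long-detector term $q^{d}Z^{1+\ep}Y^{1-2\alpha}$ (where $Z^{\alpha-1/2}\asymp Y^{1/2}q^{A/4}T^{nn_k/4}$) does not collapse to $(q^{c_1}T^{c_2})^{1-\alpha}$ as $\alpha\to1$, which is precisely why you call the refinement ``essential''; but the Hal\'asz--Montgomery step is a bilinear statement over pairs $(f,t)$, $(g,t')$ that needs the Rankin--Selberg convexity for $L(f\times\overline g,s)$ in the \emph{dual} form, with the family sum inside the square --- and that is exactly the inequality the paper proves (Theorem \ref{13}), which you never formulate. Note also that the paper's large sieve (Theorem \ref{14}) is not of the shape ``family term $+$ length term'': it has no length term, holds only for polynomials supported above the threshold $M$ of (\ref{3.10}), and carries the extra average over pseudo-characters $r\le R$ with weights $1/s(f)$, $1/|\psi_f(r)|$, whose almost-orthogonality (Lemma \ref{12}) is what makes the diagonal dominate; the zero detector (Proposition \ref{18}) is engineered so that its support begins at $w=M$, so the sieve applies.

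The second genuine gap is the log-removal. Your assertion that Gallagher's lemma can be applied ``without the separation loss $\log(q(T+2))$'' is unsupported: with spacing $\eta\asymp1/\log(qT)$ the Sobolev-type lemma inevitably produces a factor $\log(qT)$ (see (\ref{sss1})), and such a factor cannot be hidden in $(q^{\ep}T^{\ep})^{1-\alpha}$ uniformly in $\alpha$, because $1-\alpha$ may be as small as $1/\log(qT)$ --- uniformity near $\alpha=1$ is the whole point of a log-free estimate, and your ``$O(1)$ ranges of $\beta$'' device does not address it. The paper removes logs by three concrete mechanisms you have no analogue of: the box-counting Lemma \ref{rj}, which turns the per-box zero multiplicity into the factor $(qT)^{1-\alpha}$ of (\ref{redubd}); Lemma \ref{11}, which shows each zero is detected $\gg s(f)\log R$ times over $r\in R(f)$, so that dividing by $\log R$ cancels the $\log(qTN)$ coming from Gallagher's lemma; and Graham's bound (Lemma \ref{15}) for the Selberg weights $\Delta(n)$, which prevents a divisor-type loss in $\sum|a_n|^2n^{1-2\alpha}$. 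Finally, the specific exponents $c_1=2d+4nA+\frac{A}{2}+1$, $c_2=\frac{nn_k}{2}+3$ are asserted rather than derived; in the paper they come out of the explicit parameter choices (\ref{par1})--(\ref{par2}) tied to the threshold $M$ and the convexity inputs. As it stands, the proposal's central analytic estimate and its log-free bookkeeping are both missing.
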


Let us compare Theorem \ref{main} to several recent results. First, we mention the preprint \cite[Theorem 1.2]{TZ19}.
Theorem \ref{main} is weaker than \cite[Theorem 1.2]{TZ19} because we assume the Ramanujan-Petersson conjecture and a uniform bound on Langlands parameters at Archimedean places. However, note that the exponents in our bound (\ref{mainbd}) are smaller than the exponents in the bound $(q^AT^{n_k})^{10^7n^4(1-\alpha)}$ obtained in \cite{TZ19}. Our method is also an application of a large sieve, but of a different flavor from \cite{TZ19}.

Let us also compare Theorem \ref{main} to Theorem 2 of Kowalski and Michel \cite{KM02}, which motivates our method. Our result improves the analogous result of \cite[Theorem 2]{KM02} in terms of the $T$-dependence on $\alpha$: we obtain a bound $(q^{c_1+\ep}T^{c_2+\ep})^{1-\alpha}
$, as compared to their bound $T^Bq^{c_0\frac{1-\alpha}{2\alpha-1}}$ for some $B>0$. This improvement occurs in our version of the large sieve (Theorem \ref{14}), in which we refine the estimation of certain dyadic sums. Furthermore, our bound strengthens the work of \cite{KM02} by generalizing the result to any base field $k$. We hope the presentation of this paper will clarify the use of pseudo-characters in the setting of automorphic $L$-functions over $k$. Finally, we also mention a thesis of Lai \cite{Lai19}, which worked on adapting the method of Kowalski and Michel.

Theorem \ref{main} may be used to prove an effective Chebotarev density theorem for families of number fields, as shown first in \cite{PTW17}. We demonstrate a particular case of such a deduction.

For a number field $K/k$, we denote by $\widetilde{K}$ the Galois closure of $K$ over $k$ within a fixed choice of $\overline{\Q}$. For any field $K$, we denote $D_K=|\mathrm{Disc}(K)|$. 
For a number field $k$ and a Galois extension $L$ of $k$, we define the prime ideal counting functions
$\pi(x):=\left| \{ \fp \subseteq \cO_k: \mathrm{Nm}_{k/\Q}(\fp) \le x\} \right|,$ and
\begin{equation}\label{picount}
\pi_\cC(x,L/k):=\left| \{ \fp \subseteq \cO_k: \fp \text{ is unramified in }L,\left[ \frac{L/k}{\fp} \right]=\cC, \mathrm{Nm}_{k/\Q}(\fp) \le x\} \right|.
\end{equation}
Here $\displaystyle{\left[ \frac{L/k}{\fp} \right]}$ is the Artin symbol and $\cC$ is any fixed conjugacy class in $\mathrm{Gal}(L/k)$.

We prove the following effective Chebotarev density theorem for a particular family of degree $n$ extensions of $k$, for any $n \ge 2$.
\begin{thm}\label{cdt}
Fix $k$ and $n \ge 2$. Let $Z_n^\ast(k;X)$ denote the set of totally ramified cyclic Galois extensions $K/k$ with $\mathrm{Gal}(K/k) \cong C_n$ (the cyclic group of order $n$), and $\mathrm{Nm}_{k/\Q}\mathrm{Disc}(K/k) \le X$.
For every $\ep>0$, aside from at most $\ll_{\ep} X^{\ep}$ possible exceptions, each field $K \in Z_n^\ast(k;X)$ has the property that for every conjugacy class $\cC \subseteq C_n$,
\begin{equation*}
\left| \pi_\cC(x,\widetilde{K}/k)-\frac{|\cC|}{n}\pi(x) \right| \ll
\begin{cases}
\frac{|\cC|}{n}x^{1-\kappa} & \text{if } (\log D_{\widetilde{K}})^{2/\kappa} \le x<D_{\widetilde{K}}^{1/(24\kappa)}, \\
\frac{|\cC|}{n} \frac{x}{\exp(c_3(\log x)^{1/2}n^{-1/2})} & \text{if } x \ge D_{\widetilde{K}}^{1/(24\kappa)},
\end{cases}
\end{equation*}
where $c_3>0$ is an absolute constant, $\kappa>0$ is a constant depending only on $n,n_k$ and on $\ep$. 
Moreover, for some $c_n>0$, $|Z_n^\ast(k;X)| \sim c_n X^{\frac{1}{n-1}}$ as $X \to \infty$.
\end{thm}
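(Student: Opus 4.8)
\emph{Proof strategy.} The point is that every $K \in Z_n^\ast(k;X)$ is abelian over $k$, so $\widetilde K = K$ and $\zeta_K(s) = \zeta_k(s)\prod_{\chi \ne 1} L(s,\chi)$, where $\chi$ runs over the $n-1$ nontrivial characters of $\mathrm{Gal}(K/k) \cong C_n$, identified by class field theory with Hecke characters of $k$. By the conductor--discriminant formula the conductor $\mathfrak{f}(\chi)$ of each such $\chi$ divides $\mathrm{Disc}(K/k)$, so $\mathrm{Cond}(\chi) = \mathrm{Nm}_{k/\Q}\mathfrak{f}(\chi) \le X$. I would therefore set $q = \lceil X\rceil$ and take for $S(q)$ the family, counted with multiplicity, of all nontrivial characters of the groups $\mathrm{Gal}(K/k)$ as $K$ ranges over $Z_n^\ast(k;X)$. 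Then $\mathrm{Cond}(f)\le q$ for all $f$, so $A=1$ is admissible, and, granting the counting assertion below, $|S(q)| \le (n-1)|Z_n^\ast(k;X)| \ll_\ep X^{1/(n-1)+\ep} \ll q^{1/(n-1)+\ep}$, so $d=\tfrac1{n-1}+\ep$ is admissible; the Ramanujan--Petersson hypothesis is automatic for $\GL_1$ Hecke characters. Applying Theorem~\ref{main} with the $\GL$-rank equal to $1$ gives, for every $\delta \in (0,\tfrac14]$ and $T\ge 2$,
\[
\sum_{K \in Z_n^\ast(k;X)}\ \sum_{\chi \ne 1} N(\chi;1-\delta,T)\ \ll_\ep\ (q^{c_1+\ep}T^{c_2+\ep})^{\delta},
\]
with $c_1 = \tfrac{2}{n-1}+\tfrac{11}{2}+O(\ep)$ and $c_2 = \tfrac{n_k}{2}+3$, i.e. with $c_1,c_2$ bounded in terms of $n,n_k,\ep$ alone.

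Next, since $D_{\widetilde K} = D_k^{\,n}\,\mathrm{Nm}_{k/\Q}\mathrm{Disc}(K/k) \le D_k^{\,n}X$, one has $\log D_{\widetilde K} \ll_k \log X$ throughout the family. Fix a small $\kappa = \kappa(n,n_k,\ep)>0$ and set $T = (C\log X)^{2/\kappa}$ with $C$ large enough that $T \ge (\log D_{\widetilde K})^{2/\kappa}$ for all $K$ in the family. Call $K$ \emph{exceptional} if $\zeta_K$ has a zero in $M(1-\kappa,T)$. For $\kappa$ small the fixed factor $\zeta_k$ is zero-free in $\Re(s)\ge 1-\kappa$ apart from its at most one exceptional real zero, so up to that single point an exceptional $K$ has a zero of some $L(s,\chi)$ with $\chi\ne 1$ in $M(1-\kappa,T)$; hence the number of exceptional $K$ is bounded by the left-hand side of the displayed inequality with $\delta=\kappa$, which is $\ll_\ep X^{(c_1+\ep)\kappa}(\log X)^{O(1)}$. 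Choosing $\kappa$ so small that $(c_1+1)\kappa<\ep/2$ makes this $\ll_\ep X^\ep$; in particular all but $\ll_\ep X^\ep$ fields $K$ satisfy $\zeta_K(s)\ne 0$ for $\Re(s)\ge 1-\kappa$, $|\Im(s)|\le (\log D_{\widetilde K})^{2/\kappa}$. For each such non-exceptional $K$ this zero-free box, together with Landau's classical zero-free region for $\zeta_K$ beyond it (every Artin $L$-function here being a Hecke $L$-function, hence entire), is exactly the input needed by the conditional effective Chebotarev density theorem in the form used in \cite{PTW17}; applying it to $K/k$ yields the bound $\ll \tfrac{|\cC|}{n}x^{1-\kappa}$ in the range $(\log D_{\widetilde K})^{2/\kappa}\le x<D_{\widetilde K}^{1/(24\kappa)}$ and the bound $\ll \tfrac{|\cC|}{n}\,x\exp(-c_3(\log x)^{1/2}n^{-1/2})$ for $x\ge D_{\widetilde K}^{1/(24\kappa)}$, the dependence on $n_k$ being absorbed into $c_3$.

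It remains to establish $|Z_n^\ast(k;X)| \sim c_n X^{1/(n-1)}$, a purely arithmetic statement proved independently of the analytic machinery (and used above to bound $|S(q)|$). By class field theory a tamely and totally ramified $C_n$-extension of $k$ is cut out by a homomorphism from a suitable ray class group onto $C_n$ that is surjective on the inertia subgroup at the totally ramified prime, and the conductor--discriminant formula shows that $\mathrm{Nm}_{k/\Q}\mathrm{Disc}(K/k)$ equals the $(n-1)$-st power of the norm of the squarefree product of the ramified primes. Counting such radicals of norm up to $X^{1/(n-1)}$ subject to the congruence conditions required to support a $C_n$-quotient, weighted by the number of corresponding extensions — a multiplicative quantity whose Dirichlet series has a simple pole at $s=1$ — and applying a standard Tauberian argument in the spirit of Wright's work on the distribution of abelian extensions gives the claimed asymptotic; wildly ramified primes and primes that ramify without being totally ramified contribute only to lower-order terms.

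I expect the main obstacle to be the bookkeeping in the third step: propagating the conditional effective Chebotarev estimate with enough uniformity to land precisely on the two stated ranges (the exponents $2/\kappa$ and $1/(24\kappa)$), and in particular handling the possible exceptional real zero of the fixed Dedekind zeta function $\zeta_k$ uniformly over the whole family. A secondary subtlety is verifying that the exponent in $|Z_n^\ast(k;X)|\sim c_n X^{1/(n-1)}$ is genuinely $1/(n-1)$: this is exactly where the restriction to \emph{totally} ramified extensions is used, since weaker ramification would lower the discriminant exponent of a ramified prime and enlarge the count.
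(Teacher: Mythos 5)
Your overall route is the paper's route (decompose $\zeta_K/\zeta_k$ into Hecke $L$-functions, apply Theorem \ref{main} to that family, feed the resulting zero-free boxes into a PTW/TZ-style conditional Chebotarev theorem, and count the family by Wright's method), but there is a genuine gap at the step where you pass from the zero density estimate to the count of exceptional fields. Theorem \ref{main} applies to a \emph{set} of distinct cuspidal representations, and your family is taken ``with multiplicity'': a non-faithful character $\chi$ of $\mathrm{Gal}(K/k)\cong C_n$ (order $d\mid n$, $1<d<n$) is attached to every $K$ in the family containing the fixed field of $\ker\chi$. Your bound ``number of exceptional $K$ $\le$ the displayed multiset sum'' therefore needs, in addition to Theorem \ref{main} for the underlying set, a bound of the shape: for each fixed nontrivial character (equivalently, each fixed proper cyclic subextension $F/k$), at most $\ll_\ep X^{\ep}$ fields $K\in Z_n^\ast(k;X)$ contain $F$. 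This is exactly the hypothesis (\ref{piljpi})/the intermediate-field assumption of Theorem \ref{gencdt}, and it is precisely where the paper has to work (Sections \ref{genub} and \ref{ublb}): it is trivial only when $n$ is prime (all nontrivial characters faithful); for composite $n$ the paper proves it with $\tau=0$ by using total ramification (Lemma \ref{69}) to show that $\mathrm{Disc}(K/k)$ is supported on primes dividing $\mathrm{Disc}(F/k)$ with prescribed exponents, together with the bound $a_m\ll_\ep m^\ep$ on the number of fields of given relative discriminant coming from the Euler-product analysis. Without this input your argument does not control how often a single bad character is recounted, and the claimed $\ll_\ep X^\ep$ exceptional set does not follow. (Re-running the large sieve directly on the multiset does not rescue this either: repeated representations create off-diagonal Rankin--Selberg poles and degrade Theorem \ref{13} by the maximal multiplicity.)

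Two further points. First, your choice $T=(C\log X)^{2/\kappa}$ is too small for the conclusion you state: in the effective Chebotarev theorem the truncation of the explicit formula at height $T$ forces $T$ to exceed roughly $x^{\kappa}$, so covering the full range $x<D_{\widetilde K}^{1/(24\kappa)}$ (and producing the $D_{\widetilde K}^{-1/24}$-type terms behind the second range) requires $T$ to be a power of $D_{\widetilde K}$, hence of $X$; the paper takes $T=Q(\log Q)^{24}$ with $Q=X^{n/2}$ in Proposition \ref{ch4} and absorbs the resulting $X^{O(\kappa)}$ loss by shrinking $\kappa$ as in (\ref{alphaj}), which is how the thresholds $(\log D_{\widetilde K})^{2/\kappa}$ and $D_{\widetilde K}^{1/(24\kappa)}$ of Theorem \ref{cdtzf} are actually reached. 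This is fixable, but as written the deduction fails in most of the claimed $x$-range. Second, the assertion that the fixed $\zeta_k$ is zero-free in $\Re(s)\ge 1-\kappa$ up to height $T$ (apart from one real zero) is not known for a fixed strip and growing $T$; it is also unnecessary, since the Chebotarev input is a zero-free region for $\zeta_{\widetilde K}/\zeta_k=\prod_{\chi\neq 1}L(s,\chi)$, so you should define the exceptional fields directly through zeros of the nontrivial $L(s,\chi)$, as the paper does. Your sketch of the asymptotic $|Z_n^\ast(k;X)|\sim c_nX^{1/(n-1)}$ follows the same Wright-style conductor count as Section \ref{ublb} and is fine in outline.
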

Note that if $n$ is a prime, then every extension $K/k$ with $\mathrm{Gal}(K/k) \cong C_n$ is totally ramified.

Brumley, Thorner, and Zaman \cite[Theorem 2.4]{TZ18}, and Thorner-Zaman \cite[Theorem 2.1]{TZ19b} have proved significant new results of this flavor. The theorem \cite[Theorem 2.4]{TZ18} strengthens \cite{PTW17} and \cite[Theorem 2.1]{TZ19b} generalizes \cite{PTW17} in settings over $\Q$. We only remark that in the special case of cyclic extensions we consider here, Theorem \ref{cdt} is more general as it allows an arbitrary base field $k$.

Theorems analogous to Theorem \ref{cdt} may be obtained from Theorem \ref{main} for other families of fields, but these are conditional on certain other results; see Section \ref{sectcdt}. Recent work such as \cite{TZ19b} removes some significant restrictions by other means and thus we do not elaborate extensively on the general setting.

While Theorem \ref{cdt} follows the philosophy of \cite{PTW17} (incorporating a refinement from \cite{TZ18}), a key lemma used in that work over $\Q$ fails to be true over $k$ in general, and thus we must introduce new ideas to accommodate this; see Section \ref{genub}.

As a second application of Theorem \ref{main}, we bound $\ell$-torsion in class groups of fields in the family $Z_n^\ast(k;X)$. Given a number field $K$, the ideal class group $\Cl_K$ is the quotient group of the fractional ideals modulo principal ideals. For an integer $\ell \ge 1$, we define the $\ell$-torsion subgroup (written multiplicatively)
\begin{equation*}
\Cl_K[\ell]=\{ [\mathfrak{a} ] \in \Cl_K: [\mathfrak{a} ]^\ell=\Id \}.
\end{equation*}
We prove the following bound on $\ell$-torsion.
\begin{thm}\label{ltorsion}
Fix a number field $k/\Q$ and an integer $n \ge 2$.
Let $Z_n^\ast(k;X)$ be as in Theorem \ref{cdt}. For every sufficiently small $\ep>0$, for every $X \ge 1$,
aside from at most $\ll_{n,\ep} X^{\ep}$ possible exceptions, each field $K \in Z_n^\ast(k;X)$ has the property that for every integer $\ell \ge 1$,
\begin{equation}\label{ltorbd}
|\Cl_K[\ell]| \ll_{n,n_k,D_k,\ell,\ep} D_K^{\frac12-\frac{1}{2\ell(n-1)}+\ep}.
\end{equation}
\end{thm}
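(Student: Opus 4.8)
The plan is to combine three ingredients: the trivial class number bound $|\Cl_K[\ell]| \le h_K \ll_{n,\ep} D_K^{1/2+\ep}$; the criterion of Ellenberg and Venkatesh that a number field possessing many prime ideals of small norm has small $\ell$-torsion; and the effective Chebotarev density theorem of Theorem \ref{cdt}, which supplies the requisite small primes for all but $\ll_\ep X^\ep$ of the fields $K \in Z_n^\ast(k;X)$. The starting observation is that every $K \in Z_n^\ast(k;X)$ is already Galois over $k$, so $\widetilde K = K$ and $D_{\widetilde K}=D_K$; hence Theorem \ref{cdt} applies verbatim to $K$ itself.

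Recall the relevant form of the Ellenberg--Venkatesh bound: if $K/\Q$ has degree $n$ and, for some $0<\delta<\frac{1}{2\ell(n-1)}$, the number of prime ideals $\mathfrak P \subseteq \cO_K$ of residue degree one with $\mathrm{Nm}_{K/\Q}(\mathfrak P)\le D_K^{\delta}$ is $\gg D_K^{\delta}/\log D_K$, then $|\Cl_K[\ell]| \ll_{n,\ell,\ep} D_K^{1/2-\delta+\ep}$. To produce such primes for a given $K$, apply Theorem \ref{cdt} with $\cC=\{1\}$, the trivial conjugacy class of $C_n$: away from $\ll_\ep X^\ep$ exceptional fields, for $x$ in the range $(\log D_K)^{2/\kappa} \le x < D_K^{1/(24\kappa)}$ one has $\pi_{\{1\}}(x,K/k) \ge \frac1n\pi(x) - \frac1n x^{1-\kappa} \gg x/(n\log x)$ by the prime ideal theorem for $k$. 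Discarding the $O_{n_k}(\sqrt x)$ primes of $k$ that are not of degree one over $\Q$, and passing to the $n$ primes of $K$ lying above each remaining completely split prime of $k$, we obtain $\gg x/\log x$ prime ideals of $K$ of residue degree one with norm at most $x$.

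It remains to line up the two ranges, uniformly in $\ell$. Choose $x=D_K^{\delta}$ with $\delta$ slightly below $\frac{1}{2\ell(n-1)}$. The upper constraint $D_K^{\delta}<D_K^{1/(24\kappa)}$ holds for every $\ell \ge 2$ provided $\kappa \le \frac{n-1}{6}$; since Theorem \ref{cdt} remains valid with $\kappa$ replaced by any smaller positive constant, we may assume this. The lower constraint $D_K^{\delta} \ge (\log D_K)^{2/\kappa}$ holds once $D_K$ exceeds a bound depending on $n,n_k,\ell,\ep$. To avoid an $\ell$-dependent exceptional set, fix $L_0=L_0(n,\ep)$ so that $\frac{1}{2\ell(n-1)} \le \frac{\ep}{2}$ for all $\ell \ge L_0$: for such $\ell$ the exponent $\frac12-\frac{1}{2\ell(n-1)}+\ep$ is at least $\frac12+\frac{\ep}{2}$, so (\ref{ltorbd}) already follows from the unconditional bound $|\Cl_K[\ell]| \le h_K \ll_{n,\ep} D_K^{1/2+\ep/2}$. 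For the finitely many $\ell$ with $2\le \ell<L_0$ --- and for $\ell=1$, trivially --- the Ellenberg--Venkatesh input applies as soon as $D_K$ exceeds a single constant $C_0=C_0(n,n_k,\ep)$. Enlarging the Chebotarev exceptional set by the $O(1)$ fields of $Z_n^\ast(k;X)$ with $D_K \le C_0$ (finitely many, by Hermite--Minkowski), we obtain (\ref{ltorbd}) for all $\ell \ge 1$ outside an exceptional set of size $\ll_{n,\ep} X^\ep$, upon letting $\delta$ tend to $\frac{1}{2\ell(n-1)}$ from below and relabeling $\ep$.

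The substantive work is entirely contained in Theorem \ref{cdt} (hence in Theorem \ref{main}); given that input, the only points demanding care here are the numerical matching of the Ellenberg--Venkatesh exponent $\frac{1}{2\ell(n-1)}$ against the Chebotarev range $D_K^{1/(24\kappa)}$, which forces $\kappa$ to be taken small, and the bookkeeping that renders the exceptional set independent of $\ell$. I expect the latter to be the only real (though mild) obstacle: one must peel off the large values of $\ell$, where the target bound is essentially the trivial $h_K \ll D_K^{1/2+\ep}$, from the finitely many small values, where the Ellenberg--Venkatesh estimate genuinely contributes and a single lower bound on $D_K$ then suffices across the whole range.
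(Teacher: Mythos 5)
Your overall route is the same as the paper's: Theorem \ref{cdt} applied with the trivial class $\cC=\{1\}$ supplies $\gg x/\log x$ completely split primes of $k$ with $x$ a small power of $D_K$ for all but $\ll_\ep X^\ep$ fields, these primes are fed into the Ellenberg--Venkatesh criterion, and large $\ell$ is disposed of by the trivial bound $|\Cl_K[\ell]|\le h_K\ll D_K^{1/2+\ep}$ (your bookkeeping making the exceptional set independent of $\ell$ is in fact spelled out more carefully than in the paper). The one substantive problem is your statement of the Ellenberg--Venkatesh input. You state it for ``$K/\Q$ of degree $n$'' with residue-degree-one primes of $K$ over $\Q$ and threshold $\delta<\frac{1}{2\ell(n-1)}$; but here $[K:\Q]=nn_k$, so the absolute form of the lemma over $\Q$ only permits $\delta<\frac{1}{2\ell(nn_k-1)}$ and would yield the weaker exponent $\frac12-\frac{1}{2\ell(nn_k-1)}+\ep$, missing the claimed bound exactly when $k\neq\Q$, which is the point of the theorem. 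What is needed, and what the paper invokes as Lemma \ref{ev07}, is the \emph{relative} form of \cite[Lemma 2.3]{EV07} applied to the degree-$n$ extension $K/k$: $M$ prime ideals of $k$ of norm at most $\cD(K/k)^{\delta}$, unramified and not extensions of primes from any proper subfield of $K$ containing $k$ (completely split primes of $k$ suffice), with $\delta<\frac{1}{2\ell(n-1)}$, give $|\Cl_K[\ell]|\ll_{[K:\Q],\ell,\ep} D_K^{1/2+\ep}M^{-1}$; since $\cD(K/k)\asymp_{n,D_k}D_K$ by $D_K=\cD(K/k)D_k^{n}$, this produces the stated exponent. Your Chebotarev step already produces exactly the primes this relative lemma wants (the detour through primes of $K$ of residue degree one over $\Q$ is unnecessary), so the repair is only to cite the correct form of the lemma; with that replacement your argument coincides with the paper's proof, which runs through Theorem \ref{genltor} and Proposition \ref{manyideals} specialized to $G=C_n$, $\tau=0$.

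A smaller quibble: the assertion that Theorem \ref{cdt} ``remains valid with $\kappa$ replaced by any smaller positive constant'' is not a formal consequence of its statement (for $D_{\widetilde{K}}^{1/(24\kappa)}\le x<D_{\widetilde{K}}^{1/(24\kappa')}$ the exponential-saving bound does not imply the bound $x^{1-\kappa'}$). It is also unnecessary: if your choice $x=D_K^{\delta}$ happens to exceed $D_{\widetilde{K}}^{1/(24\kappa)}$, the second branch of Theorem \ref{cdt} already gives an error that is $o(x/\log x)$, so $\pi_{\{1\}}(x,K/k)\gg x/(n\log x)$ in either regime; alternatively, one may run the argument with a smaller value of $\ep$ in Theorem \ref{cdt}, which only shrinks both $\kappa$ and the exceptional set.
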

Note that Theorem \ref{ltorsion} improves a result of Frei and Widmer \cite{FW17} since the possible exceptional set allowed by Theorem \ref{ltorsion} is smaller; in \cite{FW17} the bound (\ref{ltorbd}) is obtained but with possibly $ \ll X^{\frac{1}{n-1}-\min\{ \frac{1}{2\ell(n-1)},\tilde{\delta}\}+\ep}$ exceptional cases in $Z_n^\ast(k;X)$ (for some $\tilde{\delta}=\tilde{\delta}(n,n_k)>0$). Theorem \ref{ltorsion} also strengthens Theorem 2.1(iv) in \cite{TZ19b} since we can take any base field. 
Theorem \ref{ltorsion} coincides with \cite[Theorem 2.4(1)]{LOTZ} in the case when the order of the cyclic group is prime. In \cite[Theorem 2.4(2)]{LOTZ}, the authors provide the $\ell$-torsion bounds when $K/k$ is of degree $n$ and the Galois group is $S_n$. The machinery of \cite{LOTZ} implies our Theorems \ref{cdt} and \ref{ltorsion}.
For other recent progress on $\ell$-torsion bounds, see e.g., \cite{PTW19} and \cite{Wan21}.

The outline of the paper is as follows. In Section 2, we describe the key components of the proof of Theorem \ref{main}, including a large sieve inequality (Theorem \ref{14}). In Section 3, we show how to deduce Theorem \ref{14} from Theorem \ref{13}, a dyadic large sieve inequality. Note that this is the step in which we make our key improvement to the method of Kowalski-Michel. In Section 4, we prove the technical details of Theorem \ref{13}. In Section 5, we deduce Theorem \ref{main} from the large sieve inequality (Theorem \ref{14}). In Section 6, we prove Theorem \ref{cdt} and in Section 7 we briefly deduce Theorem \ref{ltorsion} from Theorem \ref{cdt}.

We use Vinogradov's notation $f \ll_{\nu} g$, which means $|f| \le c(\nu)|g|$, where $c(\nu)>0$ is a constant that may depend on parameter $\nu$. The notation $f \asymp_{\nu} g$ means that $f \gg_\nu g$ and $f \ll_\nu g$.

\begin{rem}\label{autrepbd}
The expected value for $d$ in Theorem \ref{main} is $A(n+1)$; this is known for $n<3$ by \cite[Theorems 1.1 and 1.2]{BM18}. The best known value of $d$ for general $n$ is $2nA+\ep$ for any $\ep>0$ by \cite[Theorem A.1]{TZ18}. With this known value, the constant $c_1$ in the bound (\ref{mainbd}) is $8nA+\frac{A}{2}+1$.
\end{rem}

\end{section}

\begin{section}{Method of proof of the main theorem}

\begin{subsection}{The general setting of zero density estimates}
To situate the method of proof of our main theorem (Theorem \ref{main}) in the world of zero density estimates for families of $L$-functions, and tools to prove them, we very briefly recall a few highlights.

In a classical setting, that of Dirichlet $L$-functions associated to Dirichlet characters $\chi$ mod $q$, one can prove zero density estimates using an argument analogous to \cite[Sections 9 and 10.2]{IK04}. Formally, one can consider zero-counting functions such as
\begin{equation*}
N_q(\alpha,T)=\sum_{\chi \text{ mod } q} N(\alpha,T,\chi), \ \ N(\alpha,T,\chi)=| \{ \rho \in \C: L(\rho,\chi)=0, \Re(\rho) \ge \alpha, |\Im(\rho)| \le T \} |.
\end{equation*}
In this case, the Huxley density estimate is
\begin{equation*}
N_q(\alpha,T) \ll (qT)^{\frac{12}{5}(1-\alpha)}(\log qT)^A
\end{equation*}
where $A$ is an absolute constant; see \cite[Section 18.2]{IK04}.
A ``hybrid'' density estimate of 
Montgomery \cite{Mon69} is
\begin{equation*}
\sum_{q \le Q} \ \sideset{}{_{}^{\ast}}\sum_{\chi (\mathrm{mod} \ q)} N(\alpha,T,\chi) \ll (Q^2T)^{\frac52 (1-\alpha)}(\log QT)^c,
\end{equation*}
for some $c>0$, where $^\ast$ restricts the sum to primitive characters.

In some applications of the zero density estimates, one needs the estimate to be log-free. For example, in the proof of the Linnik's theorem (see e.g., Chapter 18 of \cite{IK04}), one needs a log-free zero density estimate on average for Dirichlet $L$-functions of the form
\begin{equation*}
N_q(\alpha,T) \ll (qT)^{c(\alpha)(1-\alpha)}.
\end{equation*}
As another example, in \cite{Mor73}, a log-free zero density estimate is a vital part in the so-called Hoheisel property; see \cite{HT20}.

There can also be log-free zero density estimates on average, which are hybrid in conductor, such as
\begin{equation}\label{jut}
\sum_{q \le Q} \ \sideset{}{_{}^{\ast}}\sum_{\chi (\mathrm{mod} \ q)} N(\alpha,T,\chi) \ll (Q^{c_1}T^{c_2})^{1-\alpha}
\end{equation}
for some $c_1,c_2>0$.
Gallagher \cite{Gal70} proved this hybrid version (\ref{jut}) for some constants $c_1,c_2$ and Selberg \cite{Sel72} proved $c_1=5+\ep$, $c_2=3+\ep$ suffice.
Jutila \cite{Jut78} 
refined the estimate and obtained $c_1=4+\ep$, $c_2=2+\ep$; these works are closely related to the method of this paper.

Our focus is on automorphic $L$-functions. A log-free zero density estimate on average for automorphic $L$-functions can take, for example, the form
\begin{equation*}
\sum_{f \in S(q)} N(f;\alpha,T) \ll T^Bq^{c(\alpha)(1-\alpha)}
\end{equation*}
for some constants $B,c(\alpha)$. As we mentioned, such an inequality was first proved in \cite{KM02}. This form was then improved in works such as \cite{LT15} \cite{TZ18} \cite{TZ19}, in various settings, to have right-hand side of the form $(q^{c_1}T^{c_2})^{(1-\alpha)}$ for some constants $c_1,c_2$. This stronger form is also the outcome of our method.

Two main approaches to prove a log-free zero density estimate for $L$-functions are as follows. One approach uses Turan's power sum inequality; see e.g. \cite{Gal70}, \cite{LT15}, and \cite{TZ19}. Another approach uses pseudo-characters; see e.g. \cite{Sel72}, \cite{Jut78}, and \cite{KM02}. 

All the pseudo-characters in the above papers have an almost orthogonality property (as a generalization of the orthogonality property for Dirichlet characters) and lead to large sieve inequalities. In fact, a key idea of proving zero density estimates is to give an upper bound for the absolute value of a certain Dirichlet polynomial, and large sieve inequalities are well-suited to this purpose.
Our Theorem \ref{14} below is the large sieve inequality we obtain from the almost orthogonality property of pseudo-characters, which in our work are denoted $\psi_{f,\fr}(\fn)$, defined in (\ref{psidef}). This definition of pseudo-characters is analogous to that in \cite{KM02}, and the almost orthogonality can be seen in Lemma \ref{12}. 

\end{subsection}

\begin{subsection}{Preliminaries}\label{notations1}
To describe the key components of the proof of Theorem \ref{main}, we need to define certain $L$-functions related to automorphic representations.
In this section we briefly summarize necessary details on the irreducible cuspidal automorphic representations $f \in S(q)$ and the associated $L$-functions. For reference, see, e.g, \cite{Bru06}, \cite{TZ19}.

For each $f \in S(q)$, the associated $L$-function is 
\begin{equation}\label{raun}
L(f,s)=\prod_{\fp} \prod_{j=1}^{n}(1-\alpha_j(\fp)\rN\fp^{-s})^{-1}=L^{\mathrm{ur}}(f,s)L^{\mathrm{ra}}(f,s).
\end{equation}
Here $\alpha_j(\fp)$ are the Satake parameters of $f$; the unramified $L$-function attached to $f$ is
\begin{equation*}
L^{\mathrm{ur}}(f,s)=\prod_{\fp \nmid \fq_f} \prod_{j=1}^{n}(1-\alpha_j(\fp)\rN\fp^{-s})^{-1}=\sum_\fm \frac{\lambda_f(\fm)}{\rN\fm^s},\end{equation*}
(see (\ref{arcondf}) for the arithmetic conductor $\fq_f$) where 
the coefficients in the sum satisfy $\lambda_f(\fm)=0$ if $(\fm,\fq_f) \subsetneq (1)$; the ramified $L$-function attached to $f$ is
\begin{equation*}
L^{\mathrm{ra}}(f,s)=\prod_{\fp|\fq_f} \prod_{j=1}^{n}(1-\alpha_j(\fp)\rN\fp^{-s})^{-1}.
\end{equation*}
Since $f$ is assumed to satisfy the Ramanujan-Petersson conjecture in the setting of Theorem \ref{main}, $|\alpha_j(\fp)| \le 1$ for all $j,\fp$, and $|\lambda_f(\fp)| \le n$ for any unramified $\fp$. Specifically, as a finite product, $L^{\mathrm{ra}}(f,s)$ has no zeros in the region $\Re(s)>0$. Thus, the zeros of $L(f,s)$ in the region $M(\alpha,T)$ defined in (\ref{mat}) are exactly those of $L^{\mathrm{ur}}(f,s)$ and when we construct $Z(f)$ as a collection of $\eta$-well-spaced zeros, it is a collection of $\eta$-well-spaced zeros of $L^{\mathrm{ur}}(f,s)$.

For $f$ as above and any $g \in S(q)$ with
\begin{equation*}
L(g,s)=\prod_{\fp} \prod_{j=1}^{n}(1-\beta_j(\fp) \rN\fp^{-s})^{-1}=L^{\mathrm{ur}}(g,s)L^{\mathrm{ra}}(g,s),
\end{equation*}
we have the Rankin-Selberg L-function
\begin{equation*}
L(f \times g,s)=\prod_{\fp} \prod_{i=1}^{n} \prod_{j=1}^{n}(1-\alpha_i(\fp)\beta_j(\fp)\rN\fp^{-s})^{-1}=L^{\mathrm{ur}}(f \times g,s)L^{\mathrm{ra}}(f \times g,s)
\end{equation*}
where the unramified $L$-function attached to $f \times g$ is
\begin{equation*}
L^{\mathrm{ur}}(f \times g,s)=\prod_{\fp \nmid \fq_{f \times g}} \prod_{i=1}^{n} \prod_{j=1}^{n}(1-\alpha_i(\fp)\beta_j(\fp)\rN\fp^{-s})^{-1}=\sum_\fm \frac{\lambda_{f \times g}(\fm)}{\rN\fm^s},
\end{equation*}
(see (\ref{arcondfg}) for the arithmetic conductor $\fq_{f \times g}$) and 
the coefficients in the sum satisfy $\lambda_{f \times g}(\fm)=0$ if $(\fm,\fq_{f \times g}) \subsetneq (1)$. Correspondingly, the ramified $L$-function associated to $f \times g$ is
\begin{equation*}
L^{\mathrm{ra}}(f \times g,s)=\prod_{\fp|\fq_{f \times g}} \prod_{i=1}^{n} \prod_{j=1}^{n}(1-\alpha_i(\fp)\beta_j(\fp) \rN\fp^{-s})^{-1}.
\end{equation*}
Since $L(f,s)$ and $L(g,s)$ are assumed to satisfy the Ramanujan-Petersson conjecture, $|\alpha_i(\fp)| \le 1$, $|\beta_j(\fp)| \le 1$ for all $i,j,p$. Therefore, $L^{\mathrm{ra}}(f \times g,s)$ is analytic and has no zeros in $\Re(s)>0$.

For $f \in S(q)$, at each Archimedean place $v$ of $k$, the local $L$-factor of $f$ is defined to be
\begin{equation*}
L(f_v,s) = \prod_{i=1}^n \Gamma_{k_v}  (s+\mu_f(v, i))
\end{equation*}
where $\{ \mu_f(v, i) \}_{i=1}^n$ are Langlands parameters associated to $k,v$ and $\Gamma_\R(s)=\pi^{-s/2}\Gamma(s/2)$, $\Gamma_\C(s)=2(2\pi)^{-s}\Gamma(s).$ 
Similarly, for $g \in S(q)$, we can define
\begin{equation*}
L(g_v,s) = \prod_{j=1}^n \Gamma_{k_v}  (s+\mu_g(v, j)).
\end{equation*}
Then at each Archimedean place $v$ of $k$, the local $L$-factor of $f \times g$ is defined to be
\begin{equation*}
L(f_v \times g_v,s) = \prod_{i=1}^n \prod_{j=1}^n \Gamma_{k_v}  (s+\mu_{f \times g}(v, i, j)).
\end{equation*}
The parameters $\{ \mu_{f \times g}(v, i, j) \}$ is equal to the set of parameters $\mu_f(v,i)\mu_g(v,j)$ and are uniformly bounded by our assumption.

 We also denote the completed $L$-functions
 \begin{equation}\label{arcondf}
  \Lambda(f,s)=L(f, s) (D_k^{n}\rN\fq_{f})^{s/2}\prod_{v \in S_\infty} L(f_v,s), 
  \end{equation}
\begin{equation}\label{arcondfg}
 \Lambda(f \times g,s)=L(f \times g, s) (D_k^{n^2}\rN\fq_{f \times g})^{s/2}\prod_{v \in S_\infty} L(f_v \times g_v,s). 
 \end{equation}
where $\fq_f,\fq_{f \times g}$ are the arithmetic conductors of $f$, $f \times g$, respectively. Note that the analytic conductor is defined to be
\begin{equation}\label{condf}
 \mathrm{Cond}(f, t)=D_k^{n} \rN\fq_{f} \prod_{v \in S_\infty } \prod_{i=1}^n  (1+|it+\mu_{f}(v,i)|^{d(v)}), \ \ \mathrm{Cond}(f)=\mathrm{Cond}(f, 0),
 \end{equation}
\begin{equation}\label{condfg}
 \mathrm{Cond}(f \times g, t)=D_k^{n^2} \rN\fq_{f \times g} \prod_{v \in S_\infty } \prod_{i=1}^n \prod_{j=1}^n  (1+|it+\mu_{f \times g}(v,i,j)|^{d(v)}), \ \ \mathrm{Cond}(f \times g)=\mathrm{Cond}(f \times g, 0) 
 \end{equation}
where $d(v)=1$ if $k_v=\R$ and $d(v)=2$ if $k_v=\C$.
 Then there are the functional equations (see e.g., \cite{Bru06})
\begin{equation}\label{fedifform}
\Lambda(f,s)=\ep(f)\Lambda(\overline{f},1-s), \ \ \Lambda(f \times g,s)=\ep(f \times g)\Lambda(\overline{f \times g},1-s)
\end{equation}
where $\ep(f), \ep(f \times g)$ are the root numbers, complex numbers of modulus 1.

\end{subsection}

\begin{subsection}{A large sieve inequality: Theorem \ref{14} and its application}\label{secofls}
The key step to prove Theorem \ref{main} is a large sieve inequality (Theorem \ref{14}). 
We briefly define objects that appear in our large sieve inequality, and then we state it and give an overview of how it implies Theorem \ref{main}.
%; see Section \ref{notations1} for more details on the objects.

For two ideals $\fu,\fv$ of $\cO_k$, $(\fu, \fv)$ means the smallest ideal in $\cO_k$ containing $\fu$ and $\fv$, and $\fu | \fv$ means $\fv \subset \fu$. 

Let $(S(q))_{q \ge 1}$ be as in Theorem \ref{main} with associated data $A,d$, and fix $q \ge 1$.
To each $f \in S(q)$, we associate an unramified $L$-function 
\[ L^{\mathrm{ur}}(f,s)=\sum_{(\fn,\fq_f)=(1)} \frac{\lambda_f(\fn)}{\rN\fn^s} \]
 where $\fq_f$ is the arithmetic conductor of $f$ (see e.g., \cite{TZ19} or (\ref{arcondf})). There is also an associated unramified Rankin-Selberg $L$-function $L^{\mathrm{ur}}(f \times \overline{f},s)$, and we define
\begin{equation}\label{sf}
s(f):=\mathrm{Res}_{s=1} L^{\mathrm{ur}}(f \times \overline{f},s).
\end{equation}

\begin{rem}\label{rm1}
From \cite{MW89},  we know that $L(f \times g,s)$ extends to a meromorphic function on $\C$. It has no poles unless $g=\overline{f}$, in which case its only pole is simple and is at $s=1$. Hence, $s=1$ is a simple pole of $L^{\mathrm{ur}}(f \times \overline{f},s)$. Therefore, the residue $s(f)$ defined in (\ref{sf}) exists and is nonzero.
\end{rem}

We fix $z \ge 1$ (a parameter depending only on $n,n_k$, to be chosen later in (\ref{zvalue})) and let $\fP=\displaystyle{\prod_{\rN\fp<z} \fp}$, where the product runs over all prime ideals $\fp$ of $\cO_k$ such that $\rN\fp < z$.
We fix $0<\delta<\frac14$ (a parameter to be chosen later in (\ref{par1})) and define
\begin{equation}\label{rfdef}
R(f)=\{\fr  \ | \ \fr \text{ is squarefree}, (\fr, \fq_f \fP)=1, \text{and } |\lambda_f(\fp)|>\rN\fp^{-\delta} \text{ for each } \fp|\fr\}.
\end{equation} 
For any ideal $\fn$ in $\cO_k$, define the M\"{o}bius function for the field $k$ as
\begin{equation*}
\mu_k(\fn)=\begin{cases} (-1)^m & \text{ if } \fn=\fp_1\dots \fp_m \text{ with } \fp_j \text{ distinct,} \\ 0 & \text{ if } \fn \text{ is not squarefree.}  \end{cases}
\end{equation*}
For $\fr$ such that $\lambda_f(\fr) \neq 0$, we define
\begin{equation}\label{psidef}
\psi_f(\fr):=\mu_k(\fr)\rN\fr|\lambda_f(\fr)|^{-2},\ \ \psi_{f,\fr}(\fn):=\mu_k(\fn)^2\psi_f((\fn,\fr)).
\end{equation}
Note that the arithmetic functions $\lambda_f$, $\psi_f$, and $\psi_{f,\fr}$ are multiplicative. The function $\psi_{f,\fr}$ plays the role of pseudo-characters; see Lemma \ref{12}.

Let $\alpha, T$ be fixed as in Theorem \ref{main}, and consider the region $M(\alpha,T)$ in the critical strip.
We say that elements in a fixed set $Z$ of complex numbers are $\eta$-well-spaced if for any two numbers $\rho \neq \rho^{'} \in Z$,
\[ |\Im(\rho-\rho^{'})| \ge \eta. \]
For each $f \in S(q)$, suppose $Z(f)$ is a set of zeros of $f$ in $M(\alpha,T)$ that are $\eta$-well-spaced with 
\begin{equation}\label{realeta}
\eta=\frac{C}{\log qT},
\end{equation}
where $C$ is an absolute constant defined in (\ref{cvalue}). In Section \ref{redu}, we will show that to prove Theorem \ref{main}, it suffices to restrict our attention to such well-spaced sets of zeros. In particular, when $Z(f)$ is appropriately chosen for each $f \in S(q)$,
\begin{equation}\label{redubd}
\sum_{f \in S(q)} N(f;\alpha,T) \ll (qT)^{1-\alpha} \sum_{f \in S(q)} |Z(f)|.
\end{equation}
With (\ref{redubd}), the proof of Theorem \ref{main} reduces to proving
\begin{equation}\label{main2}
\sum_{f \in S(q)} |Z(f)| \ll_{\ep} (q^{c_1-1+\ep}T^{c_2-1+\ep})^{1-\alpha}.
\end{equation}

We now (slightly informally) state the three key components required to prove Theorem \ref{main}, reserving the definitions of some parameters to later. We will use a Dirichlet polynomial 
\begin{equation}\label{zr}
z_\fr(f,s):=\sum_{\substack{\fn \\ w \le \rN\fn \le x \\ (\fn,\fP)=(1) \\ \fn \text{ squarefree}}} a_\fn \psi_{f,\fr}(\fn)\lambda_f(\fn) \rN\fn^{-s}
\end{equation}
as our zero detector, for carefully chosen coefficients $a_\fn$ (see (\ref{par0})) and parameters $w,x$ depending on $q,T$ (see (\ref{par1}) and (\ref{par2})). In particular, when $\rho$ is a zero of $L(f,s)$ in the region $M(\alpha,T)$, we will show $|z_\fr(f,\rho)|$ is bounded away from zero (see Proposition \ref{18}). This allows us to prove (see Remark \ref{rema}):
\begin{lemma}\label{181} 
Fix $q \ge 1$ and let $S(q)$ be as in Theorem \ref{main}. Suppose for each $f \in S(q)$, that $Z(f)$ is an $\eta$-well-spaced set of zeros with $\eta$ as in (\ref{realeta}). For our choices of $a_\fn,R,w,x$ (see (\ref{par0}), (\ref{par1}), and (\ref{par2})),
\begin{equation*}
\sum_{f \in S(q)} |Z(f)| \ll \frac{1}{\log R} \sum_{f \in S(q)} \frac{1}{s(f)} \sum_{\rho \in Z(f)} \sum_{\substack{\fr \in R(f) \\ \rN\fr \le R}} \frac{1}{|\psi_f(\fr)|} \left| z_\fr(f,\rho) \right|^2.
\end{equation*}
\end{lemma}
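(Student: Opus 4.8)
The plan is to derive Lemma~\ref{181} from the pointwise zero-detection estimate of Proposition~\ref{18}: for the chosen coefficients $a_n$ and parameters $w,x,R$ (see (\ref{par0}), (\ref{par1}), (\ref{par2})), one has $|z_r(f,\rho)|\gg 1$ with an absolute implied constant whenever $f\in S(q)$, $\rho\in Z(f)$ is a zero of $L(f,s)$ in $M(\alpha,T)$, and $r\le R$ with $r\in R(f)$. Granting this, for each such $f$ and $\rho$ we use $|\psi_f(r)|^{-1}=|\lambda_f(r)|^2/r$ (immediate from (\ref{psidef})) to write
\[
\sum_{\substack{r\le R\\ r\in R(f)}}\frac{|z_r(f,\rho)|^2}{|\psi_f(r)|}\;\gg\;\sum_{\substack{r\le R\\ r\in R(f)}}\frac{1}{|\psi_f(r)|}\;=\;\sum_{\substack{r\le R\\ r\in R(f)}}\frac{|\lambda_f(r)|^2}{r}\;\gg\;s(f)\log R ,
\]
where the last step is the Rankin--Selberg estimate discussed below. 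Dividing by $s(f)$, summing over $\rho\in Z(f)$ (which produces the factor $|Z(f)|$) and then over $f\in S(q)$, and rearranging, we obtain exactly the stated inequality. So it remains to supply two ingredients: the Rankin--Selberg lower bound for the weighted sum over $r$, and, as the substantive input, Proposition~\ref{18}.

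For the weighted sum over $r$, which is where $s(f)$ enters: the unramified Rankin--Selberg $L$-function $L^{\mathrm{ur}}(f\times\overline{f},s)$ has a simple pole at $s=1$ with residue $s(f)$ and Dirichlet coefficients at squarefree arguments coprime to $\mathrm{Cond}(f)$ equal to $|\lambda_f(r)|^2$, so a Tauberian/Mertens-type argument (partial summation against the Rankin--Selberg prime sum) gives $\sum_{r\le R,\ (r,\mathrm{Cond}(f)P)=1,\ r\text{ sqfree}}|\lambda_f(r)|^2/r\asymp s(f)\log R$. Passing to the sum over $r\in R(f)$ means discarding the primes $p\ge z$ with $|\lambda_f(p)|\le p^{-\delta}$; under the Ramanujan--Petersson hypothesis $|\lambda_f(p)|\le n$, so these primes perturb the relevant Euler products only by the convergent amount $\sum_{p\ge z}p^{-1-2\delta}$, which is absorbed by the choices of $z$ and $\delta$ (this is one place the Ramanujan hypothesis and the range $0<\delta<\tfrac14$ are used). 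Hence the lower bound $\gg s(f)\log R$ persists under the restriction $r\in R(f)$.

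The main obstacle is Proposition~\ref{18}, which I would prove in the spirit of Selberg and Kowalski--Michel: apply a smoothed Perron formula that isolates the contribution of the zero $\rho$ (working with $1/L^{\mathrm{ur}}(f,s)$ as a mollifier of $L(f,s)$, or with $-L'/L$), and show that after twisting by the pseudo-character $\psi_{f,r}$ and restricting to $r\in R(f)$ the resulting main term is exactly $z_r(f,\rho)$ up to an error that is $o(1)$ \emph{uniformly} in $f$, $q$, $T$; the almost-orthogonality of $\psi_{f,r}$ in Lemma~\ref{12} together with the shape (\ref{psidef}) is what is needed to extract that main term. The two difficulties are (i) keeping this error uniformly $o(1)$ while remaining log-free, with no stray powers of $\log qT$, which is what forces the precise calibration of $a_n$, of the ranges $w,x$, of $R$, and of the spacing $\eta=C/\log qT$ defining $Z(f)$; and (ii) controlling the contribution of zeros other than $\rho$, handled via the well-spacedness hypothesis on $Z(f)$ and standard crude zero-counting bounds for a single $L(f,s)$. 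Once Proposition~\ref{18} is in place, Lemma~\ref{181} follows by the short computation above, as in Remark~\ref{rema}.
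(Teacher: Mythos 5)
Your derivation is correct and is essentially the paper's own argument (Remark \ref{rema}): you combine the pointwise zero-detection bound of Proposition \ref{18} with the lower bound $\sum_{r \le R,\, r \in R(f)} |\psi_f(r)|^{-1} \gg s(f)\log R$ of Lemma \ref{11} (valid uniformly in $f$ because the choice $R=q^{nA}(qT)^{\ep_2}$ gives $R>q^{C}$ with $C>nA$), then rearrange. Your sketches of those two ingredients also follow the same Rankin--Selberg/Mellin-inversion route the paper uses to prove them.
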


On the other hand, we will use a large sieve inequality to show that as $\rho$ ranges over zeros of $L(f,s)$ and $f$ runs over our family $S(q)$, $|z_\fr(f,\rho)|$ cannot be away from zero too often, leading to our log-free zero density estimate. 
The large sieve inequality is as follows.

\begin{thm}\label{14}
Fix $S(q)$ and $Z(f)$ as in Lemma \ref{181}. Let $\delta$ be as used in the definition in (\ref{rfdef}). Assume $T \ge 2, R \ge 2$ and $N$ has the property that
there exists $0<\ep_0<\frac14$ such that
\begin{equation}\label{3.10}
N>M:=2\left( q^{d+nA/2}TR^{1+3\delta}(\log R) \right)^{\frac{1}{\frac{1}{2}-\ep_0}}.
\end{equation}
Then for any complex numbers $a_\fn$ such that $a_\fn=0$ if $\rN\fn<M$,
\begin{eqnarray}\label{14eqn}
\nonumber & & \sum_{f \in S(q)} \frac{1}{s(f)} \sum_{\rho \in Z(f)} \sum_{\substack{\fr \in R(f) \\ \rN\fr \le R}} \frac{1}{|\psi_f(\fr)|} \left| \sum_{\rN\fn \le N} a_\fn \psi_{f,\fr}(\fn)\lambda_f(\fn)\rN\fn^{-\rho} \right|^2 \\
& \ll & \log(qTN)\left( 1+\log \frac{\log N}{\log qTR} \right) \sum_{\rN\fn \le N} |a_\fn|^2\rN\fn^{1-2\alpha}.
 \end{eqnarray}
\end{thm}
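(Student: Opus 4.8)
The strategy is to decompose the inner Dirichlet polynomial $\sum_{n\le N}a_n\psi_{f,r}(n)\lambda_f(n)n^{-\rho}$ into $\ll\log(N/M)$ dyadic pieces, invoke the dyadic large sieve (Theorem \ref{13}) on each, and — this is the step where the method of \cite{KM02} is improved — combine the pieces without the loss that a naive Cauchy--Schwarz over them would incur. Since $a_n=0$ for $n<M$, write this polynomial as $\sum_{1\le j\le J}D_j(f,r,\rho)$, with $D_j:=\sum_{2^{j-1}M<n\le\min(2^jM,N)}a_n\psi_{f,r}(n)\lambda_f(n)n^{-\rho}$ and $J=\lceil\log_2(N/M)\rceil$. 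Expanding the square, $|\sum_jD_j|^2=\sum_j|D_j|^2+\sum_{j\ne l}D_j\overline{D_l}$; I would \emph{not} bound this by $J\sum_j|D_j|^2$, but keep the cross terms and their cancellation. For the diagonal-in-$j$ part, apply Theorem \ref{13} to each block $(2^{j-1}M,2^jM]$: hypothesis \eqref{3.10} is precisely what forces each $2^{j-1}M\ge M$ to exceed every conductor-type quantity entering the dyadic large sieve, so its ``length'' term there dominates, and Theorem \ref{13} bounds the $j$-th contribution by $\ll\log(qTN)\sum_{2^{j-1}M<n\le 2^jM}|a_n|^2n^{1-2\alpha}$; summing over $j$ and using $\sum_j\sum_{2^{j-1}M<n\le 2^jM}=\sum_{M\le n\le N}$ bounds $\sum_{f\in S(q)}\frac1{s(f)}\sum_{\rho\in Z(f)}\sum_{r\le R,\,r\in R(f)}\frac1{|\psi_f(r)|}\sum_j|D_j|^2$ by $\ll\log(qTN)\sum_{n\le N}|a_n|^2n^{1-2\alpha}$.

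The heart of the matter is the cross terms. A crude $|\sum_jD_j|^2\le J\sum_j|D_j|^2$ would cost a factor $J\ll\log(N/M)\asymp\frac{\log N}{\log(qTR)}$ (using that \eqref{3.10} gives $\log M\asymp\log(qTR)$, with implied constants depending only on the data $d,n,A,\delta,\ep_0$), and since the $a_n$ are arbitrary no weighted rearrangement of the blocks can do better; this is the lossy estimate to avoid. Instead, for $j\ne l$ the bilinear form $D_j\overline{D_l}$ is genuinely off-diagonal: the $n$- and $m$-supports are disjoint, so expanding it and carrying out the sum over $f$ (via the near-orthogonality of the $\lambda_f$ encoded by the normalisation $1/s(f)$), the sum over $\rho\in Z(f)$ (via $\eta$-well-spacedness with $\eta=C/\log(qT)$), and the sum over $r\le R$ (via the almost-orthogonality of the pseudo-characters $\psi_{f,r}$, Lemma \ref{12}) produces a gain that decays in the scale separation $|j-l|$: for $|j-l|\ge 2$ one has $|\log(n/m)|\gg|j-l|$, forcing cancellation in the $\rho$-sum, while \eqref{3.10} renders the residual conductor-type contribution negligible against the polynomial length. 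Re-summing the cross terms against this decay — by a Hilbert-type inequality, or equivalently by grouping the $\ll\log(N/M)$ blocks into $\ll 1+\log\frac{\log N}{\log(qTR)}$ bunches of geometrically increasing length — contributes only a factor $1+\log\frac{\log N}{\log(qTR)}$ beyond the diagonal estimate, which together yields \eqref{14eqn}.

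The main obstacle is making this off-diagonal estimate quantitative with the correct dependence on $|j-l|$ while absorbing every residual $q$-, $T$-, and $R$-dependence into \eqref{3.10}; concretely one must control how $\eta$, the Rankin--Selberg normalisation $s(f)$, and the pseudo-character orthogonality of Lemma \ref{12} interact across two \emph{distinct} dyadic ranges, and this is essentially Theorem \ref{13} applied off the diagonal — which is exactly why it is natural to factor the argument through Theorem \ref{13}. A subsidiary but necessary point is the bookkeeping behind $\log M\asymp\log(qTR)$, which guarantees that the number of dyadic blocks is $\asymp\frac{\log N}{\log(qTR)}$ up to admissible constants, so that the resummation delivers precisely the factor $1+\log\frac{\log N}{\log(qTR)}$ appearing in \eqref{14eqn} and not a weaker $\log\log N$.
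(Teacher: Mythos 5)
Your proposal has a genuine gap at its very first step: you invoke Theorem \ref{13} as if it bounded the per-block quantity including the sum over zeros, but the left-hand side of \eqref{13eqn} contains neither the sum over $\rho \in Z(f)$ nor the twist $n^{-\rho}$ (nor any $t$-aspect at all) — it is a pure coefficient inequality for the untwisted polynomial $\sum_n a_n\psi_{f,r}(n)\lambda_f(n)$. So the claimed diagonal bound ``Theorem \ref{13} bounds the $j$-th contribution by $\ll\log(qTN)\sum_{2^{j-1}M<n\le 2^jM}|a_n|^2n^{1-2\alpha}$'' is not something Theorem \ref{13} can deliver: the factor $\log(qTN)$ and the weight $n^{1-2\alpha}$ have to be manufactured by separate devices. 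In the paper these are exactly the three missing steps: Gallagher's Plancherel-type lemma (Lemma \ref{C}) to convert the twisted polynomial into a $t$-integral of short-interval sums, Gallagher's Sobolev-type lemma (Lemma \ref{A}) together with the $\eta$-well-spacing $\eta=C/\log qT$ to discretize that integral at the ordinates $\gamma=\Im\rho$ (this is where $\log(qTN)$ arises), and partial summation in the real parts, where $\int_M^N \frac{du}{u\log u}=\log\frac{\log N}{\log M}\ll\log\frac{\log N}{\log qTR}$ produces the second factor in \eqref{14eqn}. None of these appear in your argument, and the factor $1+\log\frac{\log N}{\log qTR}$ does not come from resumming dyadic blocks.

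The choice of dyadic scale is also fatal to the $T$-aspect, which is precisely the improvement over Kowalski--Michel that Theorems \ref{13}--\ref{14} are designed to capture. The correct deduction applies Theorem \ref{13} with $\tau=e^{1/T}$, i.e.\ to intervals of relative length $\asymp 1/T$, so that the factor $(\tau-1)\asymp 1/T$ on the right of \eqref{13eqn} offsets the $T^2$ coming from Lemma \ref{C} and, ultimately, the up to $\asymp T\log qT$ ordinates that $Z(f)$ may contain; this is also why $M$ in \eqref{3.10} carries the factor $T$, via $(\tau-1)^{-1}$ in \eqref{3.100}. With your blocks of relative length $2$ one has $(\tau-1)=1$, so even if the sum over $\rho\in Z(f)$ could be inserted blockwise you would have nothing to pay for it and would lose a factor of order $T$. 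Finally, the cross-term analysis is an assertion rather than a proof: the claimed decay in $|j-l|$ from ``cancellation in the $\rho$-sum'' and the Hilbert-type resummation would amount to establishing a hybrid mean value theorem for these twisted sums from scratch, which is exactly the difficulty the Plancherel route avoids — in the paper's argument the full polynomial is treated at once and no dyadic cross terms ever arise.
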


We remark that the motivation for the choice of weights $\frac{1}{s(f)}$ and $\frac{1}{|\psi_f(\fr)|}$ is visible in (\ref{wtmot}). We also remark that for the application of Theorem \ref{14}, we will set the support of coefficients $a_\fn$ only on squarefree ideals $\fn$.

The final piece required to obtain Theorem \ref{main} is then (see Remark \ref{remb}):
\begin{lemma}\label{201}
For our choices of $a_\fn,R,w,x$ (see (\ref{par0}), (\ref{par1}), and (\ref{par2})), we have
\begin{equation}\label{201eqn}
\log(qTx)\left( 1+\log \frac{\log x}{\log qTR} \right) \sum_{w \le \rN\fn \le x} |a_\fn|^2 \rN\fn^{1-2\alpha} \ll (\log qT)x^{2(1-\alpha)}.
\end{equation}
\end{lemma}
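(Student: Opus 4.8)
The plan is to substitute the chosen values of the parameters $w, x, R$ and the coefficients $a_n$ and verify the inequality by direct estimation, treating the two logarithmic factors on the left separately from the Dirichlet-type sum $\sum_{w \le n \le x} |a_n|^2 n^{1-2\alpha}$. First I would record the choices from (\ref{par0}), (\ref{par1}), (\ref{par2}): the coefficients $a_n$ are (up to bounded multiplicative factors) supported on squarefree integers coprime to $P$ in the dyadic-type range $[w,x]$ and are essentially flat, say $|a_n| \asymp 1$ there (or $|a_n| \ll n^{\ep}$ after the usual divisor-bound losses); the key scales are $w$ a small power of $qTR$ and $x$ a somewhat larger power of $qT$, with $R$ itself a fixed power of $qT$ determined in (\ref{par1}). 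The point of the lemma is that with these choices, everything collapses to a clean bound $\ll (\log qT)\, x^{2(1-\alpha)}$.

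The main sum I would estimate as follows. Since $\alpha \ge \frac34$ we have $1-2\alpha \le -\frac12 < 0$, so $n^{1-2\alpha}$ is decreasing, and the dominant contribution to $\sum_{w \le n \le x} |a_n|^2 n^{1-2\alpha}$ comes from the small end $n \asymp w$; by partial summation (or just by splitting into $O(\log x)$ dyadic blocks $n \asymp 2^j$) one gets
\[
\sum_{w \le n \le x} |a_n|^2 n^{1-2\alpha} \ll x^{\ep} \sum_{w \le n \le x} n^{1-2\alpha} \ll x^{\ep}\, w^{2-2\alpha},
\]
using $\sum_{n \le Y} n^{1-2\alpha} \ll Y^{2-2\alpha}$ for $\alpha \ge \frac34$ (the exponent $2-2\alpha \le \frac12$ is positive, so the sum over a dyadic block at scale $Y$ is $\asymp Y^{2-2\alpha}$ and the blocks form a geometric series). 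Now I would bound $w^{2-2\alpha} = (w^2)^{1-\alpha}$ against $x^{2(1-\alpha)} = (x^2)^{1-\alpha}$; since $w \le x$ and $1-\alpha \ge 0$, this is immediate, and in fact the slack $w \le (qTR)^{o(1)} x^{o(1)}$ versus $x$ a genuine power of $qT$ is what absorbs all the stray $x^{\ep}$ and logarithmic factors. For the two log factors: $\log(qTx) \ll \log qT$ because $x$ is a fixed power of $qT$ (the implied exponent depending only on $n, n_k$), and $1 + \log\frac{\log x}{\log qTR}$ is $O(1)$ for the same reason — $\log x$ and $\log qTR$ are comparable up to constants since both are bounded multiples of $\log qT$ (here one uses $R \ge 2$ and $qT \ge 2$ so that $\log qTR \gg 1$). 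Multiplying these $O(\log qT)$ and $O(1)$ factors against $x^{\ep} w^{2-2\alpha} \ll x^{\ep} x^{2(1-\alpha)}$ and re-absorbing the $x^{\ep}$ into the (eventually chosen) constants, or noting that the $\ep$ here can be taken as small as we like relative to the power of $qT$ defining $x$, yields exactly $\ll (\log qT)\, x^{2(1-\alpha)}$.

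The only genuinely delicate point — and the step I expect to be the main obstacle — is bookkeeping the interdependence of $w, x, R$ and confirming that the constants implied by "$x$ is a power of $qT$" and "$\log x \asymp \log qTR$" are genuinely independent of $\alpha$ and of $q, T$, since otherwise the factor $x^{2(1-\alpha)}$ on the right would not be the honest main term. Concretely, one must check that in (\ref{par1})–(\ref{par2}) the exponents relating $w$ to $qTR$ and $x$ to $qT$ depend only on $n$ and $n_k$, so that the inequality $w^{2-2\alpha} \ll_{n,n_k} x^{2-2\alpha}$ holds with a constant uniform in $\alpha \in [\tfrac34, 1]$; once that uniformity is in hand, the rest is the routine geometric-series and divisor-bound estimation sketched above. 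I would therefore organize the write-up as: (1) recall the parameter choices; (2) prove $\log(qTx) \ll \log qT$ and $1 + \log\frac{\log x}{\log qTR} \ll 1$ from those choices; (3) prove $\sum_{w \le n \le x}|a_n|^2 n^{1-2\alpha} \ll_{\ep} x^{\ep} x^{2(1-\alpha)}$ via dyadic decomposition using $\alpha \ge \frac34$; (4) combine, absorbing $x^{\ep}$ by choosing $\ep$ small against the fixed power of $qT$.
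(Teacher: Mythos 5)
There is a genuine gap, and it is exactly at the step the lemma exists to handle: the estimation of $\sum_{w \le n \le x}|a_n|^2 n^{1-2\alpha}$. You treat the coefficients as essentially flat and use the pointwise divisor bound ($|a_n| \le |\Delta(n)| \le \tau(n) \ll_\ep n^{\ep}$; the assertion $|a_n| \asymp 1$ is not justified for Selberg weights), arriving at $\ll_\ep x^{\ep}x^{2(1-\alpha)}$, and then propose to absorb $x^{\ep}$. That absorption is not possible: by (\ref{par1})--(\ref{par2}), $x$ is a fixed positive power of $qT$, so $x^{\ep}$ is an unbounded function of $qT$, not a constant; and it cannot be hidden in the exponent either, because in the deduction of Theorem \ref{main} the bound $x^{2(1-\alpha)}$ must eventually be matched against $(q^{c_1-1+\ep}T^{c_2-1+\ep})^{1-\alpha}$, whose $\ep$ is damped by the factor $1-\alpha$: when $1-\alpha$ is of size $1/\log qT$, the right-hand side of (\ref{201eqn}) is $O(\log qT)$ while $x^{\ep}$ is a genuine power of $qT$. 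The defect persists even in the best version of your argument: with $|a_n|\le 1$ the flat sum $\sum_{w\le n\le x}n^{1-2\alpha}$ is of size $\log(x/w)\asymp \log qT$ when $\alpha$ is near $1$ (against $x^{2-2\alpha}\asymp 1$), and an average divisor estimate such as $\sum_{n\le x}\tau(n)^2/n\ll(\log x)^4$ still costs extra logarithms. Any such loss is fatal here, since Lemma \ref{201} feeds a log-free estimate and the single allowed factor $\log qT$ is exactly cancelled by the $1/\log R$ in Lemma \ref{181}. (Your intermediate claim that the sum is dominated by $n\asymp w$ is also backwards — since $2-2\alpha\ge 0$ the dyadic blocks increase, so the mass sits at $n\asymp x$ — but that is minor compared with the loss of $x^{\ep}$.)

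The missing idea is to use the structure of the Selberg weights rather than just their size. The paper drops the factor $e^{-n(\log qT)^2/x}\le 1$ from (\ref{par0}) and invokes Graham's bound (Lemma \ref{15}), $\sum_{n\le x}\Delta(n)^2 n^{1-2\alpha}\ll \frac{\log(x/w)}{\log(y/w)}x^{2-2\alpha}$, which exploits the sieve shape of $\Delta$ and loses neither $x^{\ep}$ nor logarithms; the parameter choices give $\log(y/w)=\ep_3\log qT$ and $\log x\ll \log qT$, hence $\log(x/w)\ll\log(y/w)$ as recorded in (\ref{2.38}), so this ratio is $O(1)$. Your handling of the two logarithmic prefactors — $\log(qTx)\ll\log qT$ and $1+\log\frac{\log x}{\log qTR}\ll 1$ because $x$ is a fixed power of $qT$ and $x>w\gg qTR$ — agrees with the paper and is fine; it is the coefficient sum that requires Lemma \ref{15} or an equivalent device, and without it the proposal does not prove (\ref{201eqn}).
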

We will finally choose $w=M, x=N$. Combining (\ref{redubd}), Lemma \ref{181}, Theorem \ref{14}, and Lemma \ref{201}, we essentially obtain Theorem \ref{main}; see Section \ref{pfofmainthm} for details, and precise definitions of parameters. 

In Theorem \ref{main}, an important observation is that we obtain a much better bound in the $T$-aspect in the log-free zero density estimate, compared with Theorem 2 of \cite{KM02}. This is done by carefully treating the large sieve inequality in Theorem \ref{14}. In particular, in the proof of Theorem \ref{14}, we still use a dyadic sum but shorten the sum so that our estimate becomes finer; see Theorem \ref{13} for details.

\end{subsection}

\end{section}

\begin{section}{Deduction of Theorem \ref{14} from a dyadic large sieve inequality}\label{lgsv}
The key to proving the large sieve (Theorem \ref{14}) is the following dyadic version. We use the same notation established in Section \ref{secofls}.

\begin{thm}\label{13}
Fix $q \ge 1$ and let $S(q)$ be as in Theorem \ref{main}. Suppose for each $f \in S(q)$, that $Z(f)$ is an $\eta$-well-spaced set of zeros with $\eta$ as in (\ref{realeta}). Assume that $1<\tau \le 2$, $R \ge 2$, $0<\ep_0<\frac14$, $0<\delta<\frac14$ (as in (\ref{rfdef})), and that $N'$ is such that
\begin{equation}\label{3.100}
N'>M':=\left( q^{d+nA/2}(\tau-1)^{-1}R^{1+3\delta}(\log R) \right)^{\frac{1}{\frac{1}{2}-\ep_0}}.
\end{equation}
Then for any sequence of $a_\fn \in \C$,
\begin{equation}\label{13eqn}
 \sum_{f \in S(q)} \frac{1}{s(f)} \sum_{\substack{\fr \in R(f) \\ \rN\fr \le R}} \frac{1}{|\psi_f(\fr)|} \left| \sum_{N' \le \rN\fn \le \tau N'} a_\fn \psi_{f,\fr}(\fn)\lambda_f(\fn) \right|^2 \ll (\tau-1)N'\sum_{N' \le \rN\fn \le \tau N'}|a_\fn|^2.
\end{equation}
\end{thm}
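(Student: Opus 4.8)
The plan is to prove Theorem~\ref{13} by expanding the square and exploiting the almost-orthogonality of the pseudo-characters $\psi_{f,r}$ encoded in Lemma~\ref{12}, in the spirit of a classical duality/large-sieve argument. First I would open the square in the left-hand side: writing $b_n = a_n \psi_{f,r}(n)\lambda_f(n)$, the inner sum is $\sum_{N'\le m,n\le \tau N'} a_m \overline{a_n}\, \psi_{f,r}(m)\overline{\psi_{f,r}(n)}\,\lambda_f(m)\overline{\lambda_f(n)}$. The diagonal terms $m=n$ contribute, after summing over $r$ and $f$ with the weights $\frac{1}{s(f)|\psi_f(r)|}$, a quantity that should be $\ll N' \sum_{N'\le n\le \tau N'}|a_n|^2$ — here one uses that $|\psi_{f,r}(n)\lambda_f(n)|$ is controlled (via the definition of $R(f)$, where $|\lambda_f(p)|>p^{-\delta}$ forces $|\psi_f((n,r))|$ to not be too large, and the Ramanujan--Petersson bound controls $|\lambda_f(n)|$) and that $\sum_{f}\frac{1}{s(f)}\sum_{r\le R, r\in R(f)}\frac{1}{|\psi_f(r)|}\cdot(\text{diagonal weight})$ is $O(1)$ or at worst absorbable; the factor $N'$ rather than $\tau N'$ arises since the range has length $(\tau-1)N'$... wait, more precisely the diagonal gives $\ll (\tau-1)N' \sum |a_n|^2$ only if we are careful, or $\ll N'\sum|a_n|^2$ trivially, and the stated bound has the $(\tau-1)$ so the diagonal must be handled to extract it — I would track this via the normalization built into the weights, as hinted near~(\ref{wtmot}).

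The heart of the matter is the off-diagonal $m\ne n$. Here I would invoke the almost-orthogonality relation for the $\psi_{f,r}$ (Lemma~\ref{12}): summing $\psi_{f,r}(m)\overline{\psi_{f,r}(n)}\lambda_f(m)\overline{\lambda_f(n)}$ against $\frac{1}{s(f)|\psi_f(r)|}$ over $f\in S(q)$ and over squarefree $r\le R$ coprime to $\mathrm{Cond}(f)P$ should produce, for $m\ne n$, a main term that is governed by the Rankin--Selberg residue $s(f)$ (which is why $1/s(f)$ is the right weight) and an error that is small once $mn$ is large enough — and this is exactly where the hypothesis $N' > M'$ with $M' = (q^{d+nA/2}(\tau-1)^{-1}R^{1+3\delta}\log R)^{1/(\frac12-\ep_0)}$ enters: it guarantees that the total conductor/ramification contribution $q^{d+nA/2}$ (the $q^d$ from $|S(q)|\ll q^d$ and $q^{nA/2}$ from conductor-dependent factors in Rankin--Selberg), amplified by the $R^{1+3\delta}$ coming from the $r$-sum over $R(f)$ and the $\log R$, is beaten by a power of $N'$. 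One applies the bound $m,n\ge N'$ and $m,n\le \tau N'=(1+(\tau-1))N'$, so that $(\tau-1)^{-1}$ measures how close $m/n$ can be to $1$; controlling the off-diagonal requires that distinct $m,n$ in the dyadic window be genuinely separated, which forces the $(\tau-1)^{-1}$ factor in $M'$.

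The step I expect to be the main obstacle is making the off-diagonal estimate via Lemma~\ref{12} precise over a general number field $k$: one must handle the interaction between the local factors at primes dividing $r$ (where $\psi_f$ is defined through $|\lambda_f(p)|^{-2}$, and the restriction to $R(f)$ with $|\lambda_f(p)|>p^{-\delta}$ keeps these under control at the cost of the exponent $1+3\delta$ on $R$), the Rankin--Selberg orthogonality over $\GL_n(\mathbb{A}_k)$, and the summation over the family $S(q)$. Concretely, I would: (i) fix $r$, interchange the $r$-sum to the outside, and for each $r$ reduce to estimating $\sum_{f\in S(q)}\frac{1}{s(f)}\lambda_f(m')\overline{\lambda_f(n')}$-type sums where $m',n'$ are the parts of $m,n$ coprime to $r$ together with the $\psi_f((m,r)),\psi_f((n,r))$ factors; (ii) bound these using the explicit form of the almost-orthogonality (Lemma~\ref{12}), which should give a main term $\propto \delta_{m=n}$ plus an error $\ll q^{d+nA/2}(mn)^{-1/2+\ep_0}\cdot(\text{small }r\text{-factors})$; (iii) sum the error over $r\le R$ and over the $\ll (\tau N')^2$ pairs $(m,n)$, using $mn\ge N'^2$, and check that the hypothesis $N'>M'$ makes this $\ll (\tau-1)N'\sum|a_n|^2$ after a Cauchy--Schwarz step $|a_m\overline{a_n}|\le \frac12(|a_m|^2+|a_n|^2)$. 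Assembling the diagonal and off-diagonal contributions then yields~(\ref{13eqn}).
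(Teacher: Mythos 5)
There is a genuine gap, and it is structural: you expand the square with the $n$-sum inside and then hope to get cancellation, for fixed off-diagonal pairs $m\neq n$, from the outer sum over the family $(f,r)$ — i.e.\ from sums of the shape $\sum_{f\in S(q)}\frac{1}{s(f)}\lambda_f(m')\overline{\lambda_f(n')}\cdot(\cdots)$. No such ``vertical'' orthogonality over the family exists in this setting and none is claimed anywhere in the paper: $S(q)$ is an essentially arbitrary set of cuspidal representations with conductor bounded by $q^A$ (it could consist of a single $f$), so there is no trace-formula or Petersson/Kuznetsov-type input giving cancellation over $f$, and Lemma~\ref{12} does not say what you attribute to it. Lemma~\ref{12} is a purely multiplicative identity in the $n$-aspect: (\ref{3.7}) expresses $\psi_{f,r}(n)\psi_{g,t}(n)$ for a \emph{single} $n$ via the divisor function $h(d)$, and (\ref{3.8}) is an identity in $d$ (for $g=\overline f$) producing the Kronecker factor $\delta(r,t)|\psi_f(r)|$. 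It gives a main-plus-error decomposition only when you sum over a \emph{long range of $n$} against a fixed pair $(f,r),(g,t)$ — the error then being controlled by Rankin--Selberg theory (the pole of $L(f\times\overline g,s)$ at $s=1$ occurring exactly when $g=\overline f$, plus the convexity bound of Lemma~\ref{10}), not by anything in Lemma~\ref{12} itself. Your step (ii), which asserts a main term $\propto\delta_{m=n}$ with error $\ll q^{d+nA/2}(mn)^{-1/2+\ep_0}$ coming from Lemma~\ref{12}, therefore has no justification and would fail already for $|S(q)|=1$. Relatedly, your treatment of the diagonal shows the same structural problem: in the direct expansion the diagonal is $\sum_n|a_n|^2\cdot(\text{weights})$ with no length factor, so the crucial $(\tau-1)N'$ cannot be ``extracted by being careful'' there.

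The missing idea is duality, which is the paper's Step 1: since $\|A\|=\|A^\ast\|$, the inequality (\ref{13eqn}) is equivalent to the dual statement (\ref{smooth}), in which arbitrary coefficients $b(f,r)$ sit inside the square and the \emph{outer} sum is over $n\in[N',\tau N']$. Expanding that square produces pairs $((f,r),(g,t))$, and the inner long $n$-sum $S_1(f,g,r,t)$ is then evaluated by attaching a smooth cutoff $\varphi$, applying (\ref{3.7}) to reduce to sums of Rankin--Selberg coefficients, and using Mellin inversion plus a contour shift: the residue at $s=1$ exists only when $f=g$, and summing over $d$ with (\ref{3.8}) collapses it to $\delta(r,t)|\psi_f(r)|$, giving the main term $\asymp s(f)(\tau-1)N'\delta(r,t)|\psi_f(r)|$ — this is where $(\tau-1)N'$ genuinely comes from, and why the weights $1/s(f)$, $1/|\psi_f(r)|$ cancel as in (\ref{wtmot}). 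The error terms are bounded by the convexity bound (\ref{102}), the conductor bound (\ref{bh}), the estimate (\ref{112}) of Lemma~\ref{11}, and $|S(q)|\ll q^d$, and are dominated by the main term precisely under the hypothesis (\ref{3.100}); this is also where the exponent $1+3\delta$ on $R$ and the factor $(\tau-1)^{-1}$ in $M'$ actually arise (the latter not from spacing of $m,n$ as you suggest, but from comparing $N'(\tau-1)$ with $N'^{1/2+\ep}$ times the error factors). Without the dualization your decomposition has no mechanism to produce the required cancellation, so the proposal as written does not prove Theorem~\ref{13}.
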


\begin{proof}[Deduction of Theorem \ref{14} from Theorem \ref{13}]  

We highlight this deduction because here is where our result improves on Kowalski and Michel \cite{KM02}. We follow ideas of \cite[proof of Theorem 7.5]{Mon71}.
Our deduction contains three steps.

Step 1: We fix $M$ according to $q,T,R,\ep_0$ as in Theorem \ref{14} and fix $N>M$. In this step, we apply Theorem \ref{13} to obtain an inequality 
\begin{equation}\label{step1}
\int_{-T}^T \sum_{f \in S(q)} \frac{1}{s(f)} \sum_{\substack{\fr \in R(f) \\ \rN\fr \le R}} \frac{1}{|\psi_f(\fr)|} \left| \sum_{\fn} a_\fn \psi_{f,\fr}(\fn)\lambda_f(\fn)\rN\fn^{-it} \right|^2 dt \ll \sum_{\fn} |a_\fn|^2\rN\fn
\end{equation}
for any $T \ge 2$ and any complex numbers $a_\fn$ such that $a_\fn=0$ if $\rN\fn<M$ or $\rN\fn>N$ (so that effectively the inner sum is over $\fn$ with $M \le \rN\fn \le N$). We remark that this type of observation already can be seen in, for example, Theorem 7.1 in \cite{Mon71} and Th\'{e}or\`{e}me 10 in \cite{Bom87}.

Step 2: We replace the integral in (\ref{step1}) by a sum over the imaginary parts $\gamma=\Im(\rho)$ of the well-spaced zeros in $Z(f)$ for each $f 
\in S(q)$. In particular, we show that
\begin{equation}\label{step2}
\sum_{f \in S(q)} \frac{1}{s(f)} \sum_{\rho \in Z(f)} \sum_{\substack{\fr \in R(f) \\ \rN\fr \le R}} \frac{1}{|\psi_f(\fr)|} \left| \sum_{M \le \rN\fn \le N} a_\fn \psi_{f,\fr}(\fn)\lambda_f(\fn)\rN\fn^{-i\gamma} \right|^2  \ll (\log qTN)\sum_{M \le \rN\fn \le N}|a_\fn|^2\rN\fn,
\end{equation}
for any complex numbers $a_\fn$.
 
Step 3: We incorporate the real parts of the well-spaced zeros in (\ref{step2}) via partial summation, ultimately proving (\ref{14eqn}).

\subsection*{Proof of Step 1.}

We recall without proof a lemma of Gallagher; see, e.g., Lemma 1.10 in \cite{Mon71} or Th\'{e}or\`{e}me 9 in \cite{Bom87}. The lemma is essentially an application of Plancherel's theorem.

\begin{lem}\label{C}
If $\displaystyle{S(s)=\sum_{n=1}^\infty b_n n^{-s}}$ is absolutely convergent for $\Re(s) \ge 0$, then for each $T>0$,
\begin{equation*}
\int_{-T}^T |S(it)|^2 dt \ll T^2 \int_0^\infty \left| \sum_{y<n<\tau y} b_n \right|^2 \frac{dy}{y},
\end{equation*} 
where $\tau=e^{\frac{1}{T}}$.
\end{lem}

Given $T \ge 2$, we set $\tau=e^{\frac{1}{T}}$. Observe that $1+\frac{1}{T}<\tau<1+\frac{2}{T}$, which is equivalent to $\frac{T}{2}<(\tau-1)^{-1}<T$. Therefore $2M'<M$, in the notation of Theorems \ref{14} and \ref{13}.
 
Let $a_\fn$ be complex numbers such that $a_\fn$ is nonzero only for $M \le \rN\fn \le N$. In particular, our Dirichlet series below is convergent since it is finite. Apply Lemma \ref{C} to $b_n=\sum_{\rN\fn=n} a_\fn \psi_{f,\fr}(\fn)\lambda_f(\fn)$ and obtain
\begin{eqnarray}\label{mm'}
\nonumber & &  \sum_{f \in S(q)} \frac{1}{s(f)} \sum_{\substack{\fr \in R(f) \\ \rN\fr \le R}} \frac{1}{|\psi_f(\fr)|} \int_{-T}^T \left| \sum_{\fn} a_\fn \psi_{f,\fr}(\fn)\lambda_f(\fn)\rN\fn^{-it} \right|^2 dt \\
& \ll &  \sum_{f \in S(q)} \frac{1}{s(f)} \sum_{\substack{\\ \fr \in R(f) \\ \rN\fr \le R}} \frac{1}{|\psi_f(\fr)|} T^2\int_0^\infty \left| \sum_{y<\rN\fn<\tau y} a_\fn \psi_{f,\fr}(\fn)\lambda_f(\fn) \right|^2 \frac{dy}{y}. 
\end{eqnarray}
Since $a_\fn=0$ for $\rN\fn<M$, the sum inside the integral in (\ref{mm'}) is zero if $y \le \frac{M}{\tau}$. For each $y>\frac{M}{\tau}$, we notice that $y>\frac{M}{\tau}>\frac{M}{2}>M'$ so that for each such fixed $y$ we are able to apply Theorem \ref{13} with $N'=y$, and obtain that the right-hand side in (\ref{mm'}) is
\begin{equation*}
 \ll T^2\int_0^\infty (\tau-1)y \left( \sum_{y<\rN\fn<\tau y} |a_\fn|^2 \right)  \frac{dy}{y}=T^2(\tau-1)\int_0^\infty \left( \sum_{y<\rN\fn<\tau y} |a_\fn|^2 \right) dy. 
\end{equation*}
Interchanging the order of the sum and the integral, we have
\begin{eqnarray*}
\int_0^\infty \left( \sum_{y<\rN\fn<\tau y} |a_\fn|^2 \right) dy &=& \int_0^\infty \sum_{y<n<\tau y}\left(\sum_{\rN\fn=n} |a_\fn|^2\right)dy = \sum_{n=1}^\infty \left( \sum_{\rN\fn=n}|a_\fn|^2 \right) \int_{n/\tau}^n dy \\
&=& \frac{\tau-1}{\tau} \sum_{n=1}^\infty \left( \sum_{\rN\fn=n}|a_\fn|^2 \right) n=\frac{\tau-1}{\tau} \sum_{\fn} |a_\fn|^2\rN\fn,
\end{eqnarray*}
where we recall the last sum is finite.
Therefore, (\ref{mm'}) and hence the left-hand side of (\ref{step1}) is bounded by
\begin{equation}\label{ss1}
 \ll T^2(\tau-1)^2 \tau^{-1} \sum_{\fn} |a_\fn|^2\rN\fn \ll \sum_{\fn} |a_\fn|^2\rN\fn,
\end{equation}
as claimed.

 \subsection*{Proof of Step 2.}
 This step is analogous to the proof of Theorem 7.3 in \cite{Mon71}.
 We recall without proof another lemma of Gallagher; see, e.g., Lemma 1.4 in \cite{Mon71}. 
 %The inequality in the lemma is of Sobolev type.

\begin{lem}\label{A}
Let $T_0,T \ge \delta>0$ be real numbers, and let $\cT$ be a finite set in the interval $[T_0+\frac{\delta}{2},T_0+T-\frac{\delta}{2}]$. Let $S(t)$ be a continuous complex-valued function on the interval $[T_0,T_0+T]$ with continuous derivative in $(T_0,T_0+T)$.
For $x \in \R$, denote
\[ N_\delta(x)=\sum_{\substack{t \in \cT \\ |t-x|<\delta}} 1. \]
Then
\begin{equation}\label{1tool}
\sum_{t \in \cT} N_\delta(t)^{-1} |S(t)|^2 \le \delta^{-1}\int_{T_0}^{T_0+T} |S(t)|^2 dt+\left( \int_{T_0}^{T_0+T} |S(t)|^2 dt \right)^{\frac12} \left( \int_{T_0}^{T_0+T} |S'(t)|^2 dt \right)^{\frac12}.
\end{equation}
\end{lem}

We remark that in Lemma \ref{A}, for any $t \in \cT$, $N_\delta(t)=1$ if $\cT$ is $\delta$-well-spaced, that is, $|t-t'| \ge \delta$ for all $t,t' \in \cT$ such that $t \neq t'$. In this case, Lemma \ref{A} gives a bound for $\sum_{t \in \cT} |S(t)|^2$. Therefore, it is natural to reduce to the case of well-spaced zeros, as we mentioned in (\ref{redubd}) and will derive in Section \ref{redu}. 

Now given $M,N$ as in Theorem \ref{14} and a fixed $f \in S(q)$, $r \in \Z_{>0}$, and 
$t \in \R$, we denote 
\begin{equation*}
\cS(t)=\cS(f,\fr,t)=\sum_{M \le \rN\fn \le N} a_\fn \psi_{f,\fr}(\fn)\lambda_f(\fn)\rN\fn^{-it}.
\end{equation*}
We apply Lemma \ref{A} with $\delta=\eta$, $\cT=Z(f)$, $S(t)=\cS(t)$,
and range of integration $[-T,T]$, to obtain
\begin{equation}\label{sss1}
\sum_{\substack{\rho \in Z(f) \\ \Im(\rho)=\gamma}} |\cS(\gamma)|^2 \le (\log qT)\int_{-T}^T |\cS(t)|^2 dt+\left( \int_{-T}^T |\cS(t)|^2 dt \right)^{1/2} \left( \int_{-T}^T |\cS^{'}(t)|^2 dt \right)^{1/2}.
\end{equation}
The right-hand side of (\ref{sss1}) is
\begin{eqnarray}\label{sst1}
\nonumber &=& (\log qT)\int_{-T}^T |\cS(t)|^2 dt+\left( \log N \int_{-T}^T |\cS(t)|^2 dt \right)^{1/2} \left( \frac{1}{\log N} \int_{-T}^T |\cS'(t)|^2 dt \right)^{1/2} \\
& \le & (\log qT+\log N)\int_{-T}^T |\cS(t)|^2 dt+ \frac{1}{\log N}\int_{-T}^T |\cS^{'}(t)|^2 dt.
\end{eqnarray}
Summing the first integral over $f,r$ and using (\ref{step1}) from Step 1, we have
\begin{equation}\label{stt1}
\sum_{f \in S(q)} \frac{1}{s(f)} \sum_{\substack{\fr \in R(f) \\ \rN\fr \le R}} \frac{1}{|\psi_f(\fr)|} \int_{-T}^T |\cS(t)|^2 dt \ll \sum_{M \le \rN\fn \le N} |a_\fn|^2\rN\fn.
\end{equation}
Then we treat the second integral. Since each summand of $|\cS^{'}(t)|$ only changes that of $|\cS(t)|$ by a multiple of $\log \rN\fn$,
\begin{equation}\label{ttt1}
\sum_{f \in S(q)} \frac{1}{s(f)} \sum_{\substack{\fr \in R(f) \\ \rN\fr \le R}} \frac{1}{|\psi_f(\fr)|} \int_{-T}^T |\cS^{'}(t)|^2 dt \ll \sum_{M \le \rN\fn \le N} |a_\fn\log \rN\fn|^2\rN\fn \le (\log N)^2 \sum_{M \le \rN\fn \le N} |a_\fn|^2\rN\fn.
\end{equation}
Combining (\ref{sst1}), (\ref{stt1}), and (\ref{ttt1}), we have proved (\ref{step2}) of Step 2.

\subsection*{Proof of Step 3.}
First, for each $f \in S(q)$, we replace all real parts of the well-spaced zeros in $Z(f) \subseteq M(\alpha,T)$ by $\alpha$. For fixed $M$ and $\rho \in \C$, we define the sum
\begin{equation*}
S(\rho,u)=\sum_{M \le \rN\fn \le u} a_\fn\psi_{f,\fr}(\fn)\lambda_f(\fn)\rN\fn^{-\rho}.
\end{equation*}
For any fixed $N>M$ and $\rho=\beta+i\gamma \in M(\alpha,T)$ (so that $\beta \ge \alpha$), we have by partial summation
\begin{eqnarray*}
S(\beta+i\gamma,N) &=& 
S(\alpha+i\gamma,N)N^{\alpha-\beta}+(\beta-\alpha) \int_{M}^{N}S(\alpha+i\gamma,u)u^{\alpha-\beta-1} du.
\end{eqnarray*}
Then regardless of $\beta>\alpha$ or $\beta=\alpha$, we have
\begin{eqnarray*}
\left| S(\beta+i\gamma,N) \right|^2 & \ll & |S(\alpha+i\gamma,N)|^2 + (\beta-\alpha)^2 \left( \int_{M}^{N} \frac{\log u}{u^{2\beta-2\alpha+1}} du \right) \left(\int_{M}^{N} \frac{|S(\alpha+i\gamma,u)|^2}{u\log u} du \right) \\
& \ll & |S(\alpha+i\gamma,N)|^2+\int_{M}^{N} \frac{|S(\alpha+i\gamma,u)|^2}{u\log u} du.
\end{eqnarray*}
Second, we apply (\ref{step2}) from Step 2, where $a_n$ is replaced by $a_nn^{-\alpha}$. Then from the above inequality,
\begin{eqnarray*}
& & \sum_{f \in S(q)} \frac{1}{s(f)} \sum_{\rho \in Z(f)} \sum_{\substack{\fr \in R(f) \\ \rN\fr \le R}} \frac{1}{|\psi_f(\fr)|} \left| S(\beta+i\gamma,N) \right|^2 \\
& \ll & \sum_{f \in S(q)} \frac{1}{s(f)} \sum_{\rho \in Z(f)} \sum_{\substack{\fr \in R(f) \\ \rN\fr \le R}} \frac{1}{|\psi_f(\fr)|} \left[ |S(\alpha+i\gamma,N)|^2+\int_{M}^{N} \frac{|S(\alpha+i\gamma,u)|^2}{u\log u} du \right] \\
& \ll & (\log qTN)\sum_{M \le \rN\fn \le N}|a_\fn|^2\rN\fn^{1-2\alpha}+(\log qTN)\int_{M}^{N} \sum_{M \le \rN\fn \le u} |a_\fn|^2 \rN\fn^{1-2\alpha} \frac{1}{u\log u} du.
\end{eqnarray*}
Note that 
\begin{eqnarray}\label{mqtr}
\nonumber \int_{M}^{N} \sum_{M \le \rN\fn \le u} |a_\fn|^2 \rN\fn^{1-2\alpha} \frac{1}{u\log u} du &=& \sum_{M \le \rN\fn \le N} |a_\fn|^2\rN\fn^{1-2\alpha} \int_n^{N} \frac{1}{u\log u} du \\
&=& \sum_{M \le \rN\fn \le N} |a_\fn|^2\rN\fn^{1-2\alpha} \log \frac{\log N}{\log n},
 \end{eqnarray}
 and in (\ref{mqtr}), 
 \[ \log \frac{\log N}{\log n} \ll \log \frac{\log N}{\log qTR} \]
 since $\log M \gg \log qTR$ by (\ref{3.10}).
In conclusion, Theorem \ref{14} holds, once Theorem \ref{13} is proved.
\end{proof}

\end{section}

\begin{section}{Proof of Theorem \ref{13}}\label{notations}

\begin{subsection}{Key lemmas to prove Theorem \ref{13}}\label{lemsfor13}

Now we state the necessary lemmas to prove Theorem \ref{13}, deferring their proofs to Section \ref{pf13lem}. First, we show the convexity bound for $L(f \times g,s)$ and a bound for $\mathrm{Cond}(f \times g)$, for any $f,g \in S(q)$.

\begin{lem}\label{10}
Fix $q \ge 1$, let $f,g \in S(q)$ be two cuspidal automorphic representations of $\mathrm{GL}_n(\mathbb{A}_k)$, and let $s=\sigma+it$. Assume that the Ramanujan-Petersson conjecture and a uniform bound on Langlands parameters at Archimedean places. Then we have
\begin{equation}\label{convf}
L(f,s) \ll_\ep (\mathrm{Cond}(f)(|t|+2)^{nn_k})^{\frac{1-\sigma}{2}+\ep}
\end{equation}
for $0 \le \sigma \le 1$ and any $\ep>0$, and
\begin{equation}\label{102}
L(f \times g,s) \ll_\ep (\mathrm{Cond}(f \times g)(|t|+2)^{n^2n_k})^{\frac{1-\sigma}{2}+\ep}
\end{equation}
for $0 \le \sigma \le 1$ and any $\ep>0$.
Moreover, from \cite[Theorem 1]{BH97}, we have
\begin{equation}\label{bh}
\mathrm{Cond}(f \times g) \le (\mathrm{Cond}(f)\mathrm{Cond}(g))^n.
\end{equation}
\end{lem}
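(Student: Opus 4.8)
The plan is to establish Lemma \ref{10} in three essentially independent pieces: the convexity bound \eqref{convf} for a single automorphic $L$-function, the convexity bound \eqref{102} for the Rankin--Selberg $L$-function, and the conductor bound \eqref{bh}, the last of which is simply a citation of \cite[Theorem 1]{BH97} once one checks the hypotheses apply over the base field $k$.

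For \eqref{convf}, the approach is the standard Phragm\'en--Lindel\"of convexity argument. First I would recall the completed $L$-function $\Lambda(f,s) = \mathrm{Cond}(f)^{s/2} L_\infty(f,s) L(f,s)$, which by Godement--Jacquet extends to an entire function (since $f$ is cuspidal on $\GL_n$, $n\ge 1$; if $n=1$ and $f$ is trivial one has a pole, but that case is excluded or handled trivially) and satisfies the functional equation $\Lambda(f,s) = \epsilon(f) \Lambda(\tilde f, 1-s)$ with $|\epsilon(f)|=1$. On the line $\Re(s) = 1+\ep$ the Euler product converges absolutely and $L(f,s) \ll_\ep 1$; the analytic conductor at height $t$ is $\asymp \mathrm{Cond}(f)(|t|+2)^{nn_k}$, the exponent $nn_k$ coming from the $nn_k$ archimedean Gamma factors (since $[k:\Q]=n_k$ and $f$ lives on $\GL_n$). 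Using Stirling on the Gamma factors to move the convexity bound from the completed function back to $L(f,s)$ itself, and applying Phragm\'en--Lindel\"of in the strip $-\ep \le \Re(s) \le 1+\ep$, gives the claimed bound $L(f,s) \ll_\ep (\mathrm{Cond}(f)(|t|+2)^{nn_k})^{(1-\sigma)/2+\ep}$ for $0 \le \sigma \le 1$. The bound \eqref{102} is proved identically, now with $f \times g$ in place of $f$: here one uses that $L(f\times g,s)$ has a functional equation (Jacquet--Piatetski-Shapiro--Shalika, Shahidi) and, by Remark \ref{rm1} citing \cite{MW89}, is entire unless $g = \overline{f}$, in which case one multiplies by $(s-1)(s)$ or more simply works with $L^{\mathrm{ur}}$ plus the harmless ramified factor; the analytic conductor now has archimedean part of size $(|t|+2)^{n^2 n_k^2}$ from the $n^2 n_k^2 = (nn_k)^2$ Gamma factors of the degree-$(nn_k)^2$ Rankin--Selberg $L$-function.

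The conductor bound \eqref{bh} requires a word of care: \cite[Theorem 1]{BH97} is stated for automorphic representations, and one should note that $f, g$ being representations of $\GL_n(\mathbb{A}_k)$ the Rankin--Selberg conductor $\mathrm{Cond}(f\times g)$ (a positive rational integer, the norm of the finite conductor ideal) is bounded by $(\mathrm{Cond}(f)\mathrm{Cond}(g))^n$ exactly as in the cited theorem, the degrees over $\Q$ being absorbed into the arithmetic conductors. I would simply invoke the theorem directly.

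The main obstacle is bookkeeping rather than genuine difficulty: one must be careful that the archimedean factors of the completed $L$-function over the number field $k$ carry the correct count ($nn_k$ Gamma factors for $L(f,s)$ and $(nn_k)^2$ for $L(f\times g,s)$) so that the height-dependence comes out with exponents $nn_k$ and $n^2n_k^2$, and that applying Stirling to pass between $\Lambda$ and $L$ does not cost more than $\ep$ in the exponent. The pole at $s=1$ in the case $g=\overline f$ is a minor nuisance handled by multiplying through by $s-1$, which does not affect the convexity exponent. None of this needs the Ramanujan--Petersson conjecture, though invoking it (as the paper does globally) makes the absolute convergence on $\Re(s) = 1+\ep$ and the bound $L(f,s)\ll_\ep 1$ there immediate without appeal to Rankin--Selberg bounds toward Ramanujan.
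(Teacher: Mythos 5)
Your proposal is correct, and both you and the paper ultimately run a Phragm\'en--Lindel\"of convexity argument through the functional equation with analytic conductor $\mathrm{Cond}(f\times g)(|t|+2)^{n^2n_k^2}$; the difference is in how the two edges of the strip are anchored. The paper proves only (\ref{102}) in detail (treating (\ref{convf}) as the same argument and quoting \cite[Theorem 1]{BH97} for (\ref{bh}) exactly as you do), and instead of bounding $L(f\times g,s)$ on $\Re(s)=1+\ep$ by absolute convergence, it bounds the quantity $R(f,g)$ --- the value $L(f\times g,1)$, or the residue there when $g=\overline f$ --- by $\ll_\ep \mathrm{Cond}(f\times g)^{\ep}$ using a result of Li, then feeds this through the functional equation and a Stirling bound on the ratio of Gamma factors to get $L(f\times g,0)\ll_\ep \mathrm{Cond}(f\times g)^{1/2+\ep}$, and interpolates. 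That route handles the possible pole at $s=1$ and the edge bound in one stroke and does not lean on the Ramanujan--Petersson hypothesis for the right-hand anchor, whereas your route uses the standing Ramanujan assumption to get $L(f\times g,1+\ep)\ll_\ep 1$ uniformly in the conductor and disposes of the pole by multiplying by $s-1$; under the hypotheses of this paper both are legitimate and give the same exponent $\frac{1-\sigma}{2}+\ep$. Your bookkeeping of the Gamma factors ($nn_k$ for $L(f,s)$ and $(nn_k)^2$ for $L(f\times g,s)$) matches the paper's convention in (\ref{kappa}), so no discrepancy arises there.
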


We recall the definition of $\psi_f(\fr)$ in (\ref{psidef}) and recall also the parameter $A$ such that $\mathrm{Cond}(f) \le q^A$ for all $f \in S(q)$. 
The next lemma controls the average size of $\frac{1}{|\psi_f(\fr)|}$.

\begin{lem}\label{11}
Let $f \in S(q)$. Then for $R>q^C$, where $C$ is any constant with $C>nA$,
\begin{equation}\label{111}
\sum_{\substack{\fr \in R(f) \\ \rN\fr \le R }} \frac{1}{|\psi_f(\fr)|} \gg s(f)\log R.
\end{equation}
 
Moreover, for any $R \ge 2$,
\begin{equation}\label{112}
\sum_{\substack{\fr \in R(f) \\ \rN\fr \le R }} \frac{1}{|\psi_f(\fr)|} \ll s(f)\log R.
\end{equation}
\end{lem}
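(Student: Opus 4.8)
The plan is to recognize the sum as a smoothed partial sum of the Dirichlet series attached to $|\lambda_f|^2$ and to compare that series with the Rankin--Selberg $L$-function $L^{\mathrm{ur}}(f\times\overline f,s)$, whose only pole in $\Re(s)>0$ is the simple pole at $s=1$ with residue $s(f)$ (Remark \ref{rm1}). First I would use the elementary observation that for squarefree $r$ one has $|\mu(r)|=1$, so by \eqref{psidef}
\[
\frac{1}{|\psi_f(r)|}=\frac{|\lambda_f(r)|^2}{r},\qquad\text{hence}\qquad \sum_{\substack{r\le R\\ r\in R(f)}}\frac{1}{|\psi_f(r)|}=\sum_{\substack{r\le R\\ r\in R(f)}}\frac{|\lambda_f(r)|^2}{r}=:\Sigma_R(f).
\]
Since $\lambda_f$ is multiplicative and $r$ runs over squarefree integers coprime to $\mathrm{Cond}(f)$, $|\lambda_f(r)|^2=\prod_{p\mid r}|\lambda_f(p)|^2$, and for unramified $p$ the number $|\lambda_f(p)|^2$ is precisely the $p$-th coefficient $\lambda_{f\times\overline f}(p)$ of $L^{\mathrm{ur}}(f\times\overline f,s)$; in particular these coefficients are non-negative. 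Thus the problem is to show that the partial sums of $\sum_{r\in R(f)}|\lambda_f(r)|^2 r^{-s}$ grow like those governed by a simple pole of residue comparable to $s(f)$.

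Accordingly, I would introduce $D_f(s)=\sum_{r\in R(f)}|\lambda_f(r)|^2 r^{-s}$ and establish a factorization $D_f(s)=L^{\mathrm{ur}}(f\times\overline f,s)\,G_f(s)$, where $G_f$ is holomorphic, non-vanishing, and bounded above and below by constants depending only on $n,n_k,\delta$, uniformly for $\Re(s)\ge\sigma_0$, for a fixed $\sigma_0\in(\max(\tfrac12,1-2\delta),1)$. This is a prime-by-prime Euler-product comparison: (i) the higher prime-power terms in the Rankin--Selberg local factors are $O_{n,n_k}(p^{-2\Re(s)})$ by the Ramanujan bound $|\alpha_j(p)|\le1$, so they contribute a product converging absolutely for $\Re(s)>\tfrac12$; (ii) the finitely many primes $p<z$ excluded by the condition $p\nmid P$ contribute a bounded factor, and the choice of $z$ (large in terms of $n,n_k$) guarantees that for $p\ge z$ the factor $1+|\lambda_f(p)|^2p^{-s}$ is non-vanishing and of size $\asymp1$ on $\Re(s)\ge\sigma_0$; (iii) the ramified primes and the ``bad'' primes with $|\lambda_f(p)|\le p^{-\delta}$, whose omission from $R(f)$ contributes a convergent product because $\sum_p p^{-2\delta-\sigma_0}<\infty$ as $\delta<\tfrac14$. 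It follows that $D_f(s)$ is holomorphic on $\Re(s)\ge\sigma_0$ apart from a simple pole at $s=1$ of residue $\asymp s(f)$.

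For the upper bound \eqref{112} I would apply Rankin's trick: with $\sigma=1+1/\log R$,
\[
\Sigma_R(f)\le R^{\sigma-1}\sum_{r\in R(f)}\frac{|\lambda_f(r)|^2}{r^{\sigma}}=e\,D_f(\sigma)\ll s(f)\log R,
\]
where the last step uses the factorization together with $D_f(\sigma)\ll s(f)/(\sigma-1)$ for $1<\sigma\le2$; the latter follows from the Laurent expansion of $L^{\mathrm{ur}}(f\times\overline f,s)$ at $s=1$ and the convexity bound \eqref{102} used to control the holomorphic part near $s=1$. For the lower bound \eqref{111} I would pass to the partial sums $A_f(t)=\sum_{r\le t,\,r\in R(f)}|\lambda_f(r)|^2$ and prove, via a Perron-type contour integral for $\tfrac{1}{2\pi i}\int D_f(s)\,t^{s}s^{-1}\,ds$, that $A_f(t)\asymp s(f)\,t$ once $t$ exceeds a fixed power of $\mathrm{Cond}(f\times\overline f)$: one shifts the contour past $s=1$, collecting the residue $\asymp s(f)\,t$, and bounds the shifted integral with \eqref{102} and the conductor bound $\mathrm{Cond}(f\times\overline f)\le\mathrm{Cond}(f)^{2n}\le q^{2nA}$ from \eqref{bh}. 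The resulting error is a power of $q$ that is dominated by $s(f)\,t$ precisely when $t>q^{C}$ for a suitable $C$, which is where the hypothesis $C>nA$ enters; partial summation then gives $\Sigma_R(f)\gg\int_{q^{C}}^{R}A_f(t)\,t^{-2}\,dt\gg s(f)\log(R/q^{C})\gg s(f)\log R$ for $R>q^{C}$.

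The main obstacle is the uniformity in $f$ (equivalently in $q$): every estimate must be polynomial in the conductor, which is exactly why the convexity bound \eqref{102} and the conductor bound \eqref{bh} are indispensable in the Perron step, and it is also why the comparison factor $G_f$ must be handled with care — in particular the finitely many primes dividing $\mathrm{Cond}(f)$ but not $\mathrm{Cond}(f\times\overline f)$ — so that the residue of $D_f$ is genuinely comparable to $s(f)$. The regime of small $R$ in \eqref{112} is then disposed of by the same factorization, the bound being essentially trivial there.
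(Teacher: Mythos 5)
Your proposal follows the same circle of ideas as the paper (rewrite $1/|\psi_f(r)|$ as $\lambda_{f\times\overline f}(r)/r$, compare the series restricted to $R(f)$ with $L^{\mathrm{ur}}(f\times\overline f,s)$ on $\Re(s)>1-2\delta$, extract the pole at $s=1$ with residue $\asymp s(f)$, and beat the error with (\ref{102}) and (\ref{bh}), the hypothesis $C>nA$ entering where you place it), but two steps do not hold as written. For the upper bound, the inequality $D_f(\sigma)\ll s(f)/(\sigma-1)$ for $1<\sigma\le 2$, uniformly in $f$, is not a consequence of ``the Laurent expansion together with (\ref{102})'': convexity only bounds the regular part of $L(f\times\overline f,s)$ near $s=1$ by a power of the conductor (roughly $q^{nA/2+\ep}$ via (\ref{bh})), while no lower bound for $s(f)$ is invoked anywhere (by (\ref{plprin}) and (\ref{bh}) one only knows $s(f)\ll_\ep q^{\ep}$). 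In fact, taking $\sigma=2$ and keeping only the term $r=1$, your claimed inequality would force $s(f)\gg 1$, a uniform lower bound for Rankin--Selberg residues that none of the cited inputs provides. What Rankin's trick gives from the cited estimates is $\Sigma_R(f)\ll s(f)\log R+q^{nA/2+\ep}$, which is weaker than (\ref{112}) in the relevant range $\log R\asymp\log q$. The paper's proof of (\ref{112}) is organized differently: it drops the restriction $r\in R(f)$ by positivity of $\lambda_{f\times\overline f}$, splits into smoothed dyadic blocks $\sum_{r}\lambda_{f\times\overline f}(r)\varphi(r/S)$, and extracts the residue $\cM\varphi(1)\,S\,s(f)$ from each block by a contour shift, so the factor $\log R$ comes from the number of blocks rather than from a pointwise comparison of $L(f\times\overline f,\sigma)$ with its polar part at $\sigma=1+1/\log R$.

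For the lower bound, the final chain $\int_{q^{C}}^{R}A_f(t)t^{-2}dt\gg s(f)\log(R/q^{C})\gg s(f)\log R$ fails when $R$ is only slightly larger than $q^{C}$ (say $R=2q^{C}$): then $\log(R/q^{C})\asymp 1$ while $\log R\asymp\log q$. The strictness of $C>nA$ is exactly the room you need below $q^{C}$: run the Perron estimate for $A_f(t)$ down to $t\ge q^{C'}$ for a fixed $C'$ with $nA<C'<C$ (the same error analysis applies there), and then $\log(R/q^{C'})\ge(1-C'/C)\log R\gg\log R$. This mirrors how the paper exploits the hypothesis: its dyadic blocks for (\ref{111}) run over $S\in[R^{1-\ep'},R]$, and $R^{1-\ep'}>q^{nA}$ because $C(1-\ep')>nA$ for small $\ep'$. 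Two smaller points: your claim that $G_f$ is bounded above and below by constants depending only on $n,n_k,\delta$ is slightly too strong, because the primes dividing $\mathrm{Cond}(f)$ but possibly not $\mathrm{Cond}(f\times\overline f)$ (which you rightly flag) contribute factors that are only $q^{o(1)}$ in aggregate; and your assertion that the Perron error, ``a power of $q$,'' is dominated by $s(f)t$ once $t>q^{C}$ tacitly compares a conductor power with $s(f)t$ without a lower bound on $s(f)$ --- the paper's corresponding step (its $E_4=o(S)$ against a main term $\asymp S\,s(f)$) shares this feature, so I note it only for awareness rather than as a defect peculiar to your argument.
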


We factorize our $L$-functions so that the $L$-functions behave as those of degree 1.

\begin{lem}\label{9}
We fix $z \ge 1$, set $\displaystyle{\fP=\prod_{\rN\fp<z} \fp}$ and let $f,g \in S(q)$. Then we have
\begin{equation}\label{factfg}
L^{\mathrm{ur}}(f,s)=L^\flat(f,s)L^\sharp(f,s),
 \ \ \ \ L^{\mathrm{ur}}(f \times g,s)=L^\flat(f \times g,s)L^\sharp(f \times g,s)
\end{equation}
with
\begin{equation}\label{2.5}
L^\flat (f,s)=\sideset{}{_{}^{\flat}}\sum_{(\fn,\fP)=1} \lambda_{f}(\fn)\rN\fn^{-s}=\prod_{\substack{\rN\fp \ge z}}(1+\lambda_{f}(\fp)\rN\fp^{-s})
\end{equation}
and
\begin{equation*}
L^\flat (f \times g,s)=\sideset{}{_{}^{\flat}}\sum_{(\fn,\fP)=1} \lambda_{f \times g}(\fn)\rN\fn^{-s}=\prod_{\substack{\rN\fp \ge z}}(1+\lambda_f(\fp)\lambda_g(\fp)\rN\fp^{-s}).
\end{equation*}
Here the notation $\sum^\flat$ denotes a sum over squarefree ideals, and the notation $L^\flat$ denotes that the Dirichlet series of the $L$-function only involves squarefree ideals. 
The function $L^\sharp(f,s)$ (resp. $L^\sharp(f\times g,s)$) is holomorphic and has neither zero nor pole in $\Re(s)>\frac12$. Moreover, $L^\sharp(f,s)$ (resp. $L^\sharp(f\times g,s)$) is uniformly bounded in the region $\Re(s)>\frac12+\ep$ for any fixed $\ep>0$. 
\end{lem}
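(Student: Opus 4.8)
The plan is to construct the factorization $L^{\mathrm{ur}}(f,s)=L^\flat(f,s)L^\sharp(f,s)$ directly at the level of Euler products, prime-by-prime, and then verify the claimed analytic properties of $L^\sharp$. First I would work with a single unramified prime $p \ge z$ and compare the local factor of $L^{\mathrm{ur}}(f,s)$, namely $\prod_{j=1}^{nn_k}(1-\alpha_j(p)p^{-s})^{-1}$, against the proposed ``squarefree'' local factor $1+\lambda_f(p)p^{-s}$. Since $\lambda_f(p)=\sum_j \alpha_j(p)$ (the coefficient of $p^{-s}$ in the local $L$-factor), the ratio
\[
L_p^\sharp(f,s):=\frac{\prod_{j=1}^{nn_k}(1-\alpha_j(p)p^{-s})^{-1}}{1+\lambda_f(p)p^{-s}}
\]
is a power series in $p^{-s}$ whose constant term is $1$ and whose coefficient of $p^{-s}$ vanishes; thus $L_p^\sharp(f,s)=1+O(p^{-2\sigma})$ uniformly, using the Ramanujan bound $|\alpha_j(p)|\le 1$ (so the numerator, as a power series in $p^{-s}$, has coefficients bounded polynomially in $nn_k$, and the geometric-type tail starts at the $p^{-2s}$ term). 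For primes $p<z$ (the ramified-or-small primes) one simply puts the entire local factor of $L^{\mathrm{ur}}(f,s)$ into $L^\sharp$; there are only finitely many such primes, each contributing a factor holomorphic and nonvanishing for $\Re(s)>0$ except possibly at finitely many points, but since $|\alpha_j(p)|\le 1$ the local factor $\prod_j(1-\alpha_j(p)p^{-s})^{-1}$ has no zero and its only possible poles are where $\alpha_j(p)p^{-s}=1$, which forces $\Re(s)\le 0$; hence no pole in $\Re(s)>0$ either. Defining $L^\sharp(f,s)=\prod_{p<z}(\text{local factor of }L^{\mathrm{ur}}(f,s))\cdot\prod_{p\ge z}L_p^\sharp(f,s)$ and $L^\flat(f,s)=\prod_{p\ge z}(1+\lambda_f(p)p^{-s})=\sum^\flat_{(n,P)=1}\lambda_f(n)n^{-s}$ (the last equality because expanding the product over distinct primes yields exactly the squarefree $n$ coprime to $P$), we get $L^{\mathrm{ur}}(f,s)=L^\flat(f,s)L^\sharp(f,s)$ as formal Euler products, hence as identities of Dirichlet series where everything converges.

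The analytic claims then reduce to the infinite product $\prod_{p\ge z}L_p^\sharp(f,s)$. Since $L_p^\sharp(f,s)=1+O(p^{-2\sigma})$ with an absolute implied constant (depending only on $n,n_k$), the product converges absolutely and locally uniformly for $\Re(s)>\tfrac12$, giving a holomorphic function there; moreover each factor $L_p^\sharp$ is itself holomorphic and nonzero for $\Re(s)>\tfrac12$ (its reciprocal is $(1+\lambda_f(p)p^{-s})\prod_j(1-\alpha_j(p)p^{-s})$, and for $\Re(s)>\tfrac12$ one has $|\lambda_f(p)p^{-s}|\le nn_k\,p^{-1/2}$, which is $<1$ for $p\ge z$ once $z$ is chosen large enough in terms of $n,n_k$ — this is precisely why the parameter $z$ is allowed to depend on $n,n_k$). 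So for $z$ large enough depending only on $n,n_k$, no factor vanishes and the product is zero-free in $\Re(s)>\tfrac12$; combined with the finitely many ramified/small-prime factors (zero-free and pole-free in $\Re(s)>0$, a fortiori in $\Re(s)>\tfrac12$) this gives that $L^\sharp(f,s)$ is holomorphic, zero-free, and pole-free for $\Re(s)>\tfrac12$. The uniform boundedness in $\Re(s)\ge\tfrac12+\ep$ follows by bounding $\log\prod_{p\ge z}L_p^\sharp(f,s)=\sum_{p\ge z}\log L_p^\sharp(f,s)=\sum_{p\ge z}O(p^{-1-2\ep})=O_\ep(1)$ with an absolute constant, together with the trivial bound on the finite product over $p<z$ (using $|\alpha_j(p)|\le 1$ to bound each factor by a constant depending only on $n,n_k,z$); crucially these bounds do not depend on $f$, only on $n,n_k,\ep$.

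The Rankin--Selberg case is entirely parallel: the local factor of $L^{\mathrm{ur}}(f\times g,s)$ at an unramified $p$ is $\prod_{i,j}(1-\alpha_i(p)\beta_j(p)p^{-s})^{-1}$, whose $p^{-s}$-coefficient is $\sum_{i,j}\alpha_i(p)\beta_j(p)=(\sum_i\alpha_i(p))(\sum_j\beta_j(p))=\lambda_f(p)\lambda_g(p)$, which is exactly $\lambda_{f\times g}(p)$; and by Ramanujan $|\alpha_i(p)\beta_j(p)|\le 1$, so $L_p^\sharp(f\times g,s)=1+O(p^{-2\sigma})$ just as before, with the implied constant depending only on $n,n_k$. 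The same argument with the same (or a slightly larger) choice of $z$ — still depending only on $n,n_k$ — gives holomorphy, zero-freeness, pole-freeness in $\Re(s)>\tfrac12$ and uniform boundedness in $\Re(s)\ge\tfrac12+\ep$. I do not anticipate a serious obstacle here; the only point requiring care is that the choice of $z$ must be made once and for all depending only on $n,n_k$ so as to simultaneously (i) kill the vanishing of all the local factors $L_p^\sharp$ for $p\ge z$ and (ii) make the tail sums over $p\ge z$ converge with an absolute constant — this is what forces $z$ to be a parameter depending on $n,n_k$, to be pinned down later in \eqref{zvalue}. One should also double-check that expanding $\prod_{p\ge z}(1+\lambda_f(p)p^{-s})$ genuinely produces $\sum^\flat_{(n,P)=1}\lambda_f(n)n^{-s}$, which is immediate from unique factorization and the multiplicativity of $\lambda_f$ on squarefree arguments coprime to $P$ (noting $\lambda_f$ need not be fully multiplicative, but agrees with the product of its prime values on squarefree integers).
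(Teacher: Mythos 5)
Your proposal is correct and follows essentially the same route as the paper: factor the Euler product prime by prime, absorb the primes $p<z$ and the local ratios $\prod_j(1-\alpha_j(p)p^{-s})^{-1}/(1+\lambda_f(p)p^{-s})=1+O(p^{-2\sigma})$ into $L^\sharp$, and use Ramanujan--Petersson to get holomorphy, non-vanishing, and uniform boundedness for $\Re(s)>\tfrac12+\ep$. Your explicit remark that $z$ must be taken large in terms of $n,n_k$ so that $1+\lambda_f(p)p^{-s}$ cannot vanish for $p\ge z$, $\Re(s)>\tfrac12$ is a point the paper leaves implicit (it is guaranteed by the later choice $z=4n^4n_k^4$), so this is a welcome clarification rather than a deviation.
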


Let $f,g \in S(q)$. We recall the definition of $R(f)$ (a set of squarefree ideals of $\cO_k$) in (\ref{rfdef}) and let $\fr \in R(f)$, $\ft \in R(g)$. Let a set of coefficients $\{h(\fd)\}_{\fd}$ depending on $f,g$ and $\fr,\ft$ be defined by
\begin{equation}\label{3.6}
\sum_{\fd} h(\fd)\rN\fd^{-s}=\prod_{\substack{\fp|\fr \\ \fp \nmid \ft}}(1+(\psi_f(\fp)-1)\rN\fp^{-s}) \prod_{\substack{\fp|\ft \\ \fp \nmid \fr}}(1+(\psi_g(\fp)-1)\rN\fp^{-s}) \prod_{\fp|(\fr,\ft)}(1+(\psi_f(\fp)\psi_g(\fp)-1)\rN\fp^{-s}).
\end{equation}
By construction, the function $h(\fd)$ is supported on the squarefree ideals $\fd|\fr\ft$. In closed form, we can compute
\begin{equation}\label{hd}
h(\fd)=\prod_{\substack{\fp|\fd \\ \fp|\fr \\ \fp \nmid \ft}} (\psi_f(\fp)-1) \prod_{\substack{\fp|\fd \\ \fp|\ft \\ \fp \nmid \fr}} (\psi_g(\fp)-1) \prod_{\substack{\fp|\fd \\ \fp|(\fr,
\ft)}} (\psi_f(\fp)\psi_g(\fp)-1).
\end{equation}
Since $\frac{\rN\fp}{|\lambda_f(\fp)|^2} \ge \rN\fp^{1-2\delta}>1$ for $\fp \in R(f)$, note that
\begin{equation}\label{hdbd}
|h(\fd)| \le \left( \prod_{\fp|\fr} 2|\psi_f(\fp)| \right) \left( \prod_{\fp|\ft} 2|\psi_g(\fp)| \right).
\end{equation}

Recall the pseudo-characters $\psi_{f,\fr}(\fn)$ defined in (\ref{psidef}).
The next lemma shows orthogonality (cancellation) among pseudo-characters.

\begin{lem}\label{12}
Let $f,g,\fr,\ft,\{h(\fd)\}_{\fd}$ be as above.
We have
\begin{equation}\label{3.7}
\psi_{f,\fr}(\fn)\psi_{g,\ft}(\fn)=\mu_k(\fn)^2\sum_{\fd|\fn}h(\fd)
\end{equation}
for all $\fn$. If $g=\overline{f}$, we have
\begin{equation}\label{3.8}
\sum_{\fd} h(\fd)|\lambda_f(\fd)|^2\rho_f(\fd)\rN\fd^{-1}=\delta(\fr,\ft)|\psi_f(\fr)|,
\end{equation}
where
\begin{equation}\label{defofrho}
\rho_f(\fd):=\prod_{\fp|\fd} (1+|\lambda_f(\fp)|^2\rN\fp^{-1})^{-1}
\end{equation}
and the function $\delta(\fr,\ft)$ is $1$ if $\fr=\ft$ and is $0$ otherwise.
\end{lem}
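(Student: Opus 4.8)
The plan is to verify both identities by a purely multiplicative computation, working prime-by-prime since every function involved ($\mu^2$, $\psi_{f,r}$, $\psi_{g,t}$, $h$, $\lambda_f$, $\rho_f$) is multiplicative and supported on squarefree integers. For the first identity \eqref{3.7}, I would fix a squarefree $n$ (both sides vanish otherwise, thanks to the $\mu(n)^2$ factors and the fact that $h$ is supported on squarefree divisors of $rt$) and write $n=\prod p$. Since $\psi_{f,r}(n)=\prod_{p|n}\psi_f((p,r))$ and likewise for $\psi_{g,t}$, I would compute the local factor at each prime $p|n$: it is $1$ if $p\nmid rt$, it is $\psi_f(p)$ if $p|r$, $p\nmid t$, it is $\psi_g(p)$ if $p|t$, $p\nmid r$, and it is $\psi_f(p)\psi_g(p)$ if $p|(r,t)$. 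On the other side, $\sum_{d|n}h(d)=\prod_{p|n}(1+h(p))$ because $h$ is multiplicative and supported on squarefree numbers dividing $rt$; reading off $h(p)$ from \eqref{hd} shows $1+h(p)$ equals exactly the same four cases. Hence the local factors agree and \eqref{3.7} follows.

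For the second identity \eqref{3.8}, set $g=\overline f$, so $\psi_g=\psi_{\overline f}$ and $\lambda_g(p)=\overline{\lambda_f(p)}$, whence $\lambda_f(p)\lambda_g(p)=|\lambda_f(p)|^2$ and $\psi_g(p)=\psi_{\overline f}(p)=\mu(p)\,p\,|\lambda_f(p)|^{-2}=-p|\lambda_f(p)|^{-2}=\psi_f(p)$ (noting $\mu(p)=-1$; the key point is that $\psi_f(p)$ depends only on $|\lambda_f(p)|^2$, so $\psi_f=\psi_g$ on primes). The left side of \eqref{3.8} is a sum of a multiplicative function over $d\ge 1$, supported on squarefree $d|rt$, so it factors as $\prod_{p|rt}\bigl(1+h(p)|\lambda_f(p)|^2\rho_f(p)p^{-1}\bigr)$, where $\rho_f(p)=(1+|\lambda_f(p)|^2p^{-1})^{-1}$. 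The plan is to evaluate the local factor $1+h(p)|\lambda_f(p)|^2\rho_f(p)p^{-1}$ at each prime $p|rt$ and show it equals $|\psi_f(p)|$ when $p|(r,t)$ and equals $0$ when $p$ divides exactly one of $r,t$; this immediately gives $\delta(r,t)|\psi_f(r)|$ since the product over $p|(r,t)$ of $|\psi_f(p)|$ is $|\psi_f(r)|$ when $r=t$ (as $r,t$ squarefree) and the product is empty/zero otherwise. So the crux is three short local computations:
\begin{itemize}
\item If $p|r$, $p\nmid t$: $h(p)=\psi_f(p)-1$, and I expect $1+(\psi_f(p)-1)|\lambda_f(p)|^2\rho_f(p)p^{-1}=0$. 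Indeed $(\psi_f(p)-1)|\lambda_f(p)|^2 p^{-1}=(-p|\lambda_f(p)|^{-2}-1)|\lambda_f(p)|^2p^{-1}=-1-|\lambda_f(p)|^2p^{-1}=-\rho_f(p)^{-1}$, so the product with $\rho_f(p)p^{-1}$... (careful bookkeeping of the $p^{-1}$) gives $-1$, and $1+(-1)=0$.
\item Symmetrically if $p|t$, $p\nmid r$, using $\psi_g(p)=\psi_f(p)$.
\item If $p|(r,t)$: $h(p)=\psi_f(p)\psi_g(p)-1=\psi_f(p)^2-1$, and the local factor should come out to $|\psi_f(p)|$.
\end{itemize}

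The main obstacle is purely bookkeeping: keeping the factors of $p$, the signs from $\mu$, and the definition of $\rho_f$ straight so that the vanishing in the first two cases and the clean value $|\psi_f(r)|$ in the third emerge exactly. There is one genuine subtlety worth flagging — one must be sure the local factor at $p|(r,t)$ is a nonnegative real multiple (in fact exactly $|\psi_f(p)|$, not $\psi_f(p)^2$ or some complex quantity), which uses that $\psi_f(p)=\mu(p)p|\lambda_f(p)|^{-2}$ is real and negative, so $\psi_f(p)^2=\psi_f(p)\cdot\psi_f(p)=|\psi_f(p)|\cdot(-|\psi_f(p)|)$ and the signs must be tracked carefully; I expect after simplification the local factor is $1+(\psi_f(p)^2-1)\rho_f(p)|\lambda_f(p)|^2p^{-1}$, and substituting $\rho_f(p)|\lambda_f(p)|^2p^{-1}=1-\rho_f(p)$ reduces it to $\rho_f(p)+\psi_f(p)^2(1-\rho_f(p))$, which one checks equals $-\psi_f(p)=|\psi_f(p)|$. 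Once the three local identities are in hand, reassembling the Euler product over $p|rt$ and separating the $p|(r,t)$, $p|r\setminus t$, $p|t\setminus r$ contributions finishes the proof; the appearance of $\delta(r,t)$ is automatic because any prime dividing exactly one of $r,t$ kills the whole product.
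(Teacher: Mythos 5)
Your proposal is correct and follows essentially the same route as the paper: both identities are verified by the multiplicativity of $h$, $\psi_f$, $\psi_g$, $\lambda_f$, $\rho_f$ on squarefree supports, with the same local computations at primes $p\mid r\setminus t$, $p\mid t\setminus r$, $p\mid(r,t)$ (the paper simplifies the term $h(d)\rho_f(d)|\lambda_f(d)|^2d^{-1}$ first and then sums, while you factor the sum into an Euler product first — the same calculation in a different order). Your local values ($-1$ giving a vanishing factor, and $-\psi_f(p)=|\psi_f(p)|$ at primes dividing $(r,t)$) match the paper's, so the argument is complete.
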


\end{subsection}

We derive Theorem \ref{13} from the above lemmas in Section \ref{pf13} and prove the lemmas in Section \ref{pf13lem}.

\begin{subsection}{Proof of Theorem \ref{13}}\label{pf13}
We essentially follow the strategy of Proposition 13 in \cite{KM02}, making necessary changes. We fix $q,R,\ep_0,\tau$ and fix $N'$ satisfying (\ref{3.100}) in the hypothesis of Theorem \ref{13}. We proceed in 6 steps.

\subsubsection*{Step 1. Translating to the dual problem.}

In fact, we will prove a dual statement to Theorem \ref{13}; to see why this suffices, we first make a general observation.

Let $\Theta, \Gamma$ be two countable index sets. For any $\{a_\fn\}_{\fn \in \Theta} \in \ell^2$ 
and for fixed constants $c_{\fn,\gamma}$ with $\fn \in \Theta, \gamma \in \Gamma$ such that
\begin{equation}\label{cng}
\sum_{\fn,\gamma}|c_{\fn,\gamma}|^2<\infty,
\end{equation} we define a linear operator 
\begin{eqnarray*}
\nonumber A: \ell^2 & \to & (\ell^2)^\ast \cong \ell^2 \\
\{a_\fn\}_{\fn \in \Theta} & \mapsto & \{ \sum_{\fn} c_{\fn,\gamma}a_\fn\}_{\gamma \in \Gamma}.
\end{eqnarray*}
The inequality
\begin{equation*}
\sum_{\gamma \in \Gamma} \left| \sum_\fn c_{\fn,\gamma}a_\fn \right|^2 \le \sum_{\gamma \in \Gamma} \left( \sum_\fn |c_{\fn,\gamma}|^2 \right) \left(\sum_\fn |a_\fn|^2\right)=\left(\sum_\fn |a_\fn|^2\right) \left( \sum_{\fn,\gamma}|c_{\fn,\gamma}|^2 \right) 
\end{equation*}
and the condition (\ref{cng}) ensure that $A$ is bounded as an operator, with norm
\[ \|A\| \le \left( \sum_{\fn,\gamma}|c_{\fn,\gamma}|^2 \right)^{\frac12}.\]
The dual operator $A^\ast$ is
\begin{eqnarray*}
\nonumber A^\ast: (\ell^2)^\ast \cong \ell^2 & \to &  \ell^2 \\
\{ b_{\gamma}\}_{\gamma \in \Gamma} & \mapsto & \{ \sum_{\gamma \in \Gamma} c_{\fn,\gamma}b_{\gamma}\}_{\fn},
\end{eqnarray*}
and $\| A^\ast \|=\| A \|$. In some settings, like ours, it is easier to bound the dual operator $A^\ast$.

We apply this now with the index set $\Gamma$ being the set
$\{ \{f,\fr\}: f \in S(q), \fr \in R(f), \rN\fr \le R\}$, and choose 
\begin{equation*}
c_{\fn,\gamma}=c_{\fn,\{f,\fr\}}=\frac{1}{\sqrt{s(f)|\psi_f(\fr)|}}\psi_{f,\fr}(\fn)\lambda_f(\fn).
\end{equation*}
Defining the operator $A$ as above, since $\| A^\ast \|=\| A \|$, we conclude that proving Theorem \ref{13} is equivalent to proving the dual statement (i.e., bounding $\|A^\ast\|$), namely that 
\begin{equation}\label{smooth}
 \sum_{N'<\rN\fn \le \tau N'} \left| \sum_{f \in S(q)} \frac{1}{\sqrt{s(f)}} \sum_{\substack{\fr \in R(f) \\ \rN\fr \le R}} \frac{b(f,\fr)}{\sqrt{|\psi_f(\fr)|}} \lambda_f(\fn) \psi_{f,\fr}(\fn) \right|^2 \ll (\tau-1)N'\sum_{\substack{f \in S(q) \\ \fr \in R(f) \\ \rN\fr \le R}}  |b(f,\fr)|^2 
\end{equation}
for any complex numbers $b(f,\fr)$. It is more advantageous to prove (\ref{smooth}) than to prove (\ref{13eqn}) directly because we are able to take advantage of summing over $f,\fr$ inside the absolute value in order to exploit orthogonality (cancellation) among pseudo-characters in (\ref{smooth}), via an application of Lemma \ref{12}.

\subsubsection*{Step 2. Shifting the contour.}

Recall that $1<\tau \le 2$ is fixed in Theorem \ref{13} and $z,\fP$ are as in Section \ref{secofls}.
We choose a smooth test function $\varphi: [0,\infty) \to [0,1]$ with compact support in $[1/2,3]$ such that $\varphi(x)=1$ for $1 \le x \le \tau$. Then the left-hand side of (\ref{smooth}) is bounded above by
\begin{equation}\label{giant}
\sum_{\fn} \varphi(\frac{\rN\fn}{N'}) \left| \sum_{f \in S(q)} \frac{1}{\sqrt{s(f)}} \sum_{\substack{\fr \in R(f) \\ \rN\fr \le R}} \frac{b(f,\fr)}{\sqrt{|\psi_f(\fr)|}} \lambda_f(\fn) \psi_{f,\fr}(\fn) \right|^2 = \sum_{f,g \in S(q)} \sum_{\substack{\fr \in R(f) \\ \ft \in R(g) \\ \rN\fr,\rN\ft \le R}} \frac{b(f,\fr)\overline{b(g,\ft)}}{\sqrt{s(f)s(g)|\psi_f(\fr)\psi_g(\ft)|}} S_1
\end{equation}
in which we define
\begin{equation}\label{3.14}
S_1=S_1(f,g,\fr,\ft)=\sum_{\fn} \psi_{f,\fr}(\fn)\overline{\psi_{g,\ft}(\fn)}\lambda_f(\fn)\overline{\lambda_g(\fn)}\varphi(\frac{\rN\fn}{N'}).
\end{equation}
For each such $f,g,\fr,\ft$ define $\{h(\fd)\}_{\fd}$ as in (\ref{3.6}). We will suppress the notational dependence of $S_1$ and $h$ on $f,g,\fr,\ft$.
By a key identity (\ref{3.7}) in Lemma \ref{12} and after rewriting the double sum, we have
\begin{equation}\label{3.14sec}
S_1 = \sideset{}{_{}^{\flat}}\sum_{\substack{(\fd,\fP)=1}} h(\fd)\lambda_f(\fd) \overline{\lambda_g(\fd)} T_\fd(N')
\end{equation}
in which we define
\begin{equation*}
T_\fd(N')=\sideset{}{_{}^{\flat}}\sum_{\substack{(\fn,\fd)=1 \\ (\fn,\fP)=1}} \lambda_f(\fn) \overline{\lambda_g(\fn)}\varphi(\frac{\rN(\fn\fd)}{N'}).
\end{equation*}
In both sums, recall that $\sideset{}{_{}^{\flat}}\sum$ denotes a summation over squarefree ideals.
For each $\fd$, we define
\begin{equation}\label{ldandl}
 L_\fd^\flat (f \times \overline{g},s) = \sideset{}{_{}^{\flat}}\sum_{\substack{(\fn,\fd)=1 \\ (\fn,\fP)=1}} \lambda_f(\fn) \overline{\lambda_g(\fn)} \rN\fn^{-s} = L^\flat(f \times \overline{g},s) \prod_{\substack{\fp|\fd \\ \rN\fp \ge z}} (1+\lambda_f(\fp)\overline{\lambda_g(\fp)}\rN\fp^{-s})^{-1},
\end{equation}
where $L^\flat(f \times \overline{g},s)$ is defined in Lemma \ref{9}. 
By Mellin inversion we can write
\begin{equation*}
T_\fd(N')=\frac{1}{2\pi i} \int_{(3)} L_\fd^\flat (f \times \overline{g},s) (\cM{\varphi})(s)(N'/\rN\fd)^s ds.
\end{equation*}
For convenience, define
\begin{equation}\label{hds}
H_\fd(s):=L_\fd^\flat (f \times \overline{g},s) (\cM{\varphi})(s)(N'/\rN\fd)^s.
\end{equation}
We truncate the integral to height $|t|=T'$, where $T'$ is a sufficiently large number (relative to $f,g,q,\tau,R,\varphi,\ep_0,\fd$). Then we move the line of integration to $\Re(s)=\frac12+\ep, |\Im(s)| \le T'$, for any fixed $\ep>0$. 
By Lemma \ref{9} and Remark \ref{rm1}, $H_\fd(s)$ has a possible pole at $s=1$, in precisely the case that $f=g$. Then 
\begin{equation}\label{tdn}
 T_\fd(N')=\frac{1}{2\pi i} \int_{(3)} H_\fd(s) ds = \mathrm{Res}_{s=1}H_\fd(s)+E_{\fd,1}+E_{\fd,2}
 \end{equation}
 where
 \begin{equation*}
 E_{\fd,1}=\frac{1}{2\pi i} \int_{\frac12+\ep-iT'}^{\frac12+\ep+iT'} H_\fd(s) ds
 \end{equation*}
 and
 \begin{equation}\label{ed2exp}
 E_{\fd,2}=\frac{1}{2\pi i} \left( \int_{\frac12+\ep+iT'}^{3+iT'} - \int_{\frac12+\ep-iT'}^{3-iT'} + \int_{3-i\infty}^{3-iT'} + \int_{3+iT'}^{3+i\infty} \right) H_\fd(s) ds.
 \end{equation}
 We will show that the contribution of $\displaystyle{\mathrm{Res}_{s=1}H_\fd(s)}$ to $S_1$ is precisely 
 \begin{equation}\label{rescontr}
 s(f)L^\sharp(f \times \overline{f},1)(\tau-1)N'\delta(\fr,\ft)|\psi_f(\fr)|,
 \end{equation}
  while 
  \begin{equation}\label{errore1e2}
  E_{\fd,1}=O(N'^{1/2+\ep}q^{nA/2}R^{1+2\delta+3\ep}), \ \ E_{\fd,2}=o(1).
  \end{equation}
 Substituting these contributions for each $S_1$ in (\ref{giant}), we will conclude that the main term in (\ref{giant}) is 
 \[ N'(\tau-1)\sum_{f \in S(q)} \sum_{\substack{\fr \in R(f) \\ \rN\fr \le R}} |b(\fr,f)|^2 \]
  and it dominates the error terms under the hypothesis (\ref{3.100}) of the theorem. Theorem \ref{13} then holds, once we have proved (\ref{rescontr}) and (\ref{errore1e2}). We compute the main term in Step 3, the error terms in Steps 4,5, and substitute the contributions in Step 6.
 
 \subsubsection*{Step 3. Main term in $S_1$.}
 
For $f,g \in S(q)$, we define the function $\delta(f,g)$ such that it is $1$ if $f=g$ and is $0$ otherwise.
 
By Remark \ref{rm1}, $L(f \times \overline{g},s)$ only has at most one pole, which is at $s=1$ and occurs if and only if $f=g$. Recall the definition of $\rho_f(\fd)$ in (\ref{defofrho}). Therefore,
\begin{equation*}
\mathrm{Res}_{s=1}H_\fd(s)=\delta(f,g)s(f)\rho_f(\fd)L^\sharp(f \times \overline{f},1)(\tau-1)N'\rN\fd^{-1}.
\end{equation*}
Summing over $\fd$,
\begin{equation}\label{drt}
\sideset{}{_{}^{\flat}}\sum_{\substack{(\fd,\fP)=1}} h(\fd)\lambda_f(\fd) \overline{\lambda_g(\fd)} \mathrm{Res}_{s=1}H_\fd(s) =s(f)L^\sharp(f \times \overline{f},1)(\tau-1)N'\sideset{}{_{}^{\flat}}\sum_{\substack{(\fd,\fP)=1}} h(\fd)\rho_f(\fd)|\lambda_f(\fd)|^2 \rN\fd^{-1}. 
\end{equation}
We recall from (\ref{hd}) that $h(\fd)$ is defined according to the fixed $\fr,\ft$ chosen in (\ref{giant}) and is supported on $\fd|\fr\ft$, where $\fd$ is squarefree. With the identity (\ref{3.8}) in Lemma \ref{12} and the relation $N'>R^2 \ge \rN(\fr\ft)$ by the hypothesis (\ref{3.100}), the right-hand side of (\ref{drt}) is equal to
\begin{equation*}
s(f)L^\sharp(f \times \overline{f},1)(\tau-1)N'\delta(\fr,\ft)|\psi_f(\fr)|;
\end{equation*}
recall the definition of $\psi_f(\fr)$ from (\ref{psidef}).
We also note that by Lemma \ref{9},
\begin{equation}\label{lsharp}
L^\sharp(f \times \overline{f},s) \asymp_\ep 1
\end{equation}
for any $\Re(s)>\frac12+\ep$ with any $\ep>0$. Thus, in $S_1$, the main term is 
\begin{equation*}
\asymp s(f)(\tau-1)N'\delta(\fr,\ft)|\psi_f(\fr)|.
\end{equation*}

\subsubsection*{Step 4. Contribution from $E_{\fd,2}$ to $S_1(f,g,\fr,\ft)$.}

For the error terms, we estimate $H_\fd(s)$ by showing that $\cM{\varphi}(s)$ decays faster than the the growth of $L_\fd^\flat(f \times \overline{g},s)$. On the one hand, for $s$ such that $\Re(s)$ is not a non-positive integer, integration by parts $m$ times shows
\begin{equation}\label{ms}
|\cM{\varphi}(s)| = \left| \frac{1}{s(s+1) \dots (s+m)} \int_{\frac12}^3 \varphi^{(m+1)}(t) t^{s+m} dt \right| 
\end{equation}
for any positive integer $m$. Thus, $\cM{\varphi}(s)$ decays faster than $|s|^{-m}$, for any positive integer $m$. This fast decay allows us to find a large real number $T'$ (relative to $f,g,q,\tau,R,\varphi,\ep_0,\fd$) such that for the horizontal integrals of $E_{\fd,2}$, we can find an $m$ large enough such that
\begin{equation}\label{mpbd}
|\cM\varphi(s)| =o\left(R^{-3}(\mathrm{Cond}(f \times \overline{g}){T'}^{\frac{n^2n_k}{4}}{N'}^3)^{-1} \right)
\end{equation}
for $|\Im(s)|=T', \frac12+\ep \le \Re(s) \le 3$, and that for the vertical integrals of $E_{\fd,2}$, the last two terms in (\ref{ed2exp}) are $=o(R^{-3})$. 

On the other hand, we estimate $L_\fd^\flat (f \times \overline{g},s)$ using the factorizations (\ref{ldandl}), (\ref{factfg}), and (\ref{raun}). 
Here we will learn that we must require $z$ to satisfy
\begin{equation}\label{zfirst}
z \ge n^4n_k^2.
\end{equation}
Recalling that $|\lambda_f(\fp)| \le n$ for any $\fp$ when $f \in S(q)$, with $z$ as above, we have 
\begin{equation*}
\prod_{\substack{\fp|\fd \\ \rN\fp \ge z}} (1+\lambda_f(\fp)\overline{\lambda_g(\fp)}\rN\fp^{-s})^{-1} \ll 2^{\omega(\fd)} \ll_\ep \rN\fd^\ep,
\end{equation*}
 where $\omega(\fd)$ is the number of prime ideal divisors of $\fd$. Thus,
\begin{equation}\label{ldl}
L_\fd^\flat (f \times \overline{g},s) \ll_\ep  \rN\fd^\ep L^\flat(f \times \overline{g},s).
\end{equation}
By (\ref{lsharp}), we have
\begin{equation}\label{ldl2}
L^\flat (f \times \overline{g},s) \asymp_\ep L^{\mathrm{ur}}(f \times \overline{g},s) \asymp_\ep L(f \times \overline{g},s)
\end{equation}
for $\Re(s) \ge \frac12+\ep$.
We apply the convexity bound of Lemma \ref{10} to $L(f \times \overline{g},s)$ and combine (\ref{ldl}), (\ref{ldl2}) so that
\begin{equation}\label{ldbd}
L_\fd^\flat (f \times \overline{g},s) \ll_\ep \rN\fd^\ep \mathrm{Cond}(f \times \overline{g})|\Im(s)|^{\frac{n^2n_k}{4}}.
\end{equation}
Applying (\ref{mpbd}) and (\ref{ldbd}) in the definition of $H_\fd(s)$, we have $|H_\fd(s)|=o(\rN\fd^\ep R^{-3})$ for $|\Im(s)|=T', \frac12+\ep \le \Re(s) \le 3$. Thus,
\begin{equation}\label{e21}
|E_{\fd,2}| =o(\rN\fd^\ep R^{-3}).
\end{equation}

We now compute the contribution of $E_{\fd,2}$ to $S_1$.
By (\ref{hdbd}), and the fact that $h(\fd)$ is supported on squarefree divisors of $\fr\ft$, we have 
\begin{eqnarray}\label{e22}
& & \sideset{}{_{}^{\flat}}\sum_{\substack{(\fd,\fP)=1}} h(\fd)\lambda_f(\fd) \overline{\lambda_g(\fd)}\rN\fd^\ep \ll \prod_{\fp|\fr}(1+2|\psi_f(\fp)\lambda_f(\fp)|\rN\fp^\ep)\prod_{\fp|\ft}(1+2|\psi_g(\fp)\lambda_g(\fp)|\rN\fp^\ep) \\
\nonumber & \ll &  R^{2+2\delta+3\ep}.
\end{eqnarray}
The last inequality holds because we have $|\lambda_f(\fp)|>\rN\fp^{-\delta}$ for any $\fp|\fr$ with $\fr \in R(f)$, and thus by definition (\ref{psidef}), $|\psi_f(\fp)\lambda_f(\fp)| \le \rN\fp^{1+\delta}$ (and similarly for the other factor). Taking (\ref{e21}) and (\ref{e22}) into (\ref{3.14sec}), we know that the contribution to $S_1$ from $E_{\fd,2}$ is $o(R^{-1+2\delta+3\ep})$, which is at most $o(1)$.

\subsubsection*{Step 5. Contribution from $E_{\fd,1}$ to $S_1(f,g,\fr,\ft)$.}

On $\Re(s)=1/2+\ep$, we use (\ref{ldbd}) and (\ref{bh}) (also recall the parameter $A$ from the family $S(q)$) to obtain
\begin{equation*}
L_\fd^\flat(f \times \overline{g},s) \ll_\ep q^{nA/2}\rN\fd^\ep |\Im(s)|^{\frac{n^2n_k}{4}}.
\end{equation*} 

Note that $\cM\varphi(s)$ has fast decay as shown in (\ref{ms}).  It follows that
 \begin{equation*}
 E_{\fd,1}=O(N'^{1/2+\ep}\rN\fd^{-1/2+\ep}q^{nA/2}\int_{\frac12+\ep-iT'}^{\frac12+\ep+iT'}|\cM\varphi(s)|) =O(N'^{1/2+\ep}\rN\fd^{-1/2+\ep}q^{nA/2}).
\end{equation*}
 %Note that when $\Re(s)=\frac12+\ep$ and $\Im(s)$ is large, $\cM\varphi(s)$ has fast decay as shown in (\ref{ms}); when $\Re(s)=\frac12+\ep$ and $\Im(s)$ is small, $\cM\varphi(s) \ll 1$. It follows that
 %\begin{equation*}
 %E_{\fd,1}=O(N'^{1/2+\ep}\rN\fd^{-1/2+\ep}q^{nA/2}).
%\end{equation*}

We sum over $d$ and obtain
\begin{equation*}
\sideset{}{_{}^{\flat}}\sum_{\substack{(\fd,\fP)=1}} h(\fd)\lambda_f(\fd) \overline{\lambda_g(\fd)} E_{\fd,1} \ll_\ep N'^{1/2+\ep}q^{nA/2}\sideset{}{_{}^{\flat}}\sum_{\substack{(\fd,\fP)=1}} |h(\fd)\lambda_f(\fd)\lambda_g(\fd)|\rN\fd^{-1/2+\ep}.
\end{equation*}
Similar to (\ref{e22}), we conclude
\begin{equation*}
\sideset{}{_{}^{\flat}}\sum_{\substack{(\fd,\fP)=1}} \frac{|h(\fd)\lambda_f(\fd)\lambda_g(\fd)|}{\rN\fd^{1/2-\ep}} \ll \prod_{\fp|\fr}(1+\frac{2|\psi_f(\fp)\lambda_f(\fp)|}{\rN\fp^{1/2-\ep}}) \prod_{\fp|\ft}(1+\frac{2|\psi_g(\fp)\lambda_g(\fp)|}{\rN\fp^{1/2-\ep}}) \ll_\ep R^{1+2\delta+3\ep}.
\end{equation*}

\subsubsection*{Step 6. Assembling all terms.}

Now we finally sum all three types of contribution to $S_1=S_1(f,g,\fr,\ft)$.
From Step 3, the main term in (\ref{giant}) is equal to 
\begin{equation}\label{wtmot}
\sum_{f \in S(q)}\sum_{\substack{\fr \in R(f) \\ \rN\fr \le R}} \frac{|b(\fr,f)|^2}{s(f)|\psi_f(\fr)|} s(f)L^\sharp(f \times \overline{f},1)(\tau-1)N'|\psi_f(\fr)| \asymp N'(\tau-1)\sum_{f \in S(q)} \sum_{\substack{\fr \in R(f) \\ \rN\fr \le R}} |b(\fr,f)|^2.
\end{equation}
The cancellation of the factors $s(f)|\psi_f(\fr)|$ in (\ref{wtmot}) motivates the large sieve weighting we chose in Theorem \ref{13} (hence also in Theorem \ref{14}).

From Steps 4,5, the error term $E_{\fd,1}$ dominates $E_{\fd,2}$. In (\ref{giant}), the contribution from $E_{\fd,1}$ is at most
\begin{eqnarray*}
\nonumber & \ll_\ep & \sum_{f,g \in S(q)} \sum_{\substack{\fr \in R(f) \\ \ft \in R(g) \\ \rN\fr, \rN\ft \le R}} \frac{|b(f,\fr)\overline{b(g,\ft)}|}{\sqrt{s(f)s(g)|\psi_f(\fr)\psi_g(\ft)|}} R^{1+2\delta+3\ep}N'^{1/2+\ep}q^{nA/2} \\
&=& R^{1+2\delta+3\ep}N'^{1/2+\ep}q^{nA/2}\left( \sum_{f \in S(q)} \frac{1}{\sqrt{s(f)}} \sum_{\substack{\fr \in R(f) \\ \rN\fr \le R}} \frac{|b(f,\fr)|}{\sqrt{|\psi_f(\fr)|}} \right)^2.
\end{eqnarray*}
By Cauchy's inequality, the squared expression is at most
\begin{equation*}
 \le  \left(\sum_{f \in S(q)} \sum_{\substack{\fr \in R(f) \\ \rN\fr \le R}} |b(f,\fr)|^2 \right) \left( \sum_{f \in S(q)} \frac{1}{s(f)} \sum_{\substack{\fr \in R(f) \\ \rN\fr \le R}} \frac{1}{|\psi_f(\fr)|} \right) \ll |S(q)|(\log R) \sum_{f \in S(q)} \sum_{\substack{\fr \in R(f) \\ \rN\fr \le R}} |b(f,\fr)|^2,
\end{equation*}
in which we applied (\ref{112}).
In conclusion, the left-hand side of (\ref{smooth}) is
\begin{eqnarray}\label{smer}
 & \le & \sum_{f,g \in S(q)} \sum_{\substack{\fr \in R(f) \\ \ft \in R(g) \\ \rN\fr, \rN\ft \le R}} \frac{b(f,\fr)\overline{b(g,\ft)}}{\sqrt{s(f)s(g)|\psi_f(\fr)\psi_g(\ft)|}} S_1 \\
\nonumber & \ll_\ep & \left( N'(\tau-1)+N'^{1/2+\ep}q^{nA/2+d}R^{1+2\delta+3\ep}(\log R) \right) \sum_{f \in S(q)} \sum_{\substack{\fr \in R(f) \\ \rN\fr \le R}} |b(\fr,f)|^2.
\end{eqnarray}
Here we have used the fact that $|S(q)| \ll q^d$. Since we can choose $\ep$ arbitrarily small, by (\ref{3.100}),
\begin{equation}\label{smer31}
N'^{1/2+\ep}q^{nA/2+d}R^{1+2\delta+3\ep}(\log R) < N'(\tau-1).
\end{equation}
The inequalities (\ref{smer}) and (\ref{smer31}) together prove (\ref{smooth}), hence the theorem.
\end{subsection}

\begin{subsection}{Proof of key lemmas}\label{pf13lem}

To complete the proof of Theorem \ref{13}, it remains to prove the lemmas stated in Section \ref{lemsfor13}.

\begin{proof}[Proof of Lemma \ref{10}]
We only prove (\ref{102}). 

 The functional equation (\ref{fedifform}) is equivalent to
\begin{equation}\label{fe}
 \ep(f \times g)^{-1}(D_k^{n^2}\rN\fq_{f \times g})^{s-\frac12} \prod_{v \in S_\infty} L(f_v \times g_v,s) L(f_v \times g_v,1-s)^{-1} L(f \times g,s)=L(\overline{f \times g},1-s).
\end{equation}
Noting that $L(\overline{f \times g},1-s)=\overline{L(f\times g,1-\overline{s})}$, we have
\begin{equation*}
\left|  (D_k^{n^2}\rN\fq_{f \times g})^{s-\frac12} \prod_{v \in S_\infty} L(f_v \times g_v,s) L(f_v \times g_v,1-s)^{-1} L(f \times g,s) \right|=\left| L(f \times g,1-\overline{s}) \right|.
\end{equation*}
Using the equation (5.115) in \cite{IK04} (to bound the quotient of gamma factors) and letting $s \to 1^{+}$ in the equation above, we have
\begin{equation}\label{plprin2}
L(f \times g,-\ep) \ll_\ep \mathrm{Cond}(f \times g)^{\frac12+\ep}.
\end{equation}
Note that (\ref{plprin2}) also holds for a twist $(f \times g) \otimes |\mathrm{det}|^{it}$ for any $t \in \R$; see, e.g., (1.11) in \cite{Har03}. Thus,
\begin{equation}\label{plprin3}
L(f \times g,-\ep+it) \ll_\ep [\mathrm{Cond}(f \times g)(|t|+2)^{n^2n_k}]^{\frac12+\ep}.
\end{equation}
The convexity bound
\begin{equation*}
L(f \times g,s) \ll_\ep (\mathrm{Cond}(f \times g)(|t|+2)^{n^2n_k})^{\frac{1-\sigma}{2}+\ep}
\end{equation*}
then follows from the Phragmen-Lindel\"{o}f principle applied to (\ref{plprin3}) and the fact that $L(f\times g,1+it) \ll_\ep 1$.
\end{proof}

\begin{proof}[Proof of Lemma \ref{11}]
We write
\begin{equation*}
\sum_{\substack{\fr \in R(f) \\ \rN\fr \le R}} \frac{1}{|\psi_f(\fr)|}= \sum_{\substack{\fr \in R(f) \\ \rN\fr \le R}} \frac{|\lambda_f(\fr)|^2}{\rN\fr}=\sum_{\substack{\fr \in R(f) \\ \rN\fr \le R}} \frac{\lambda_{f \times \overline{f}}(\fr)}{\rN\fr}.
\end{equation*}
Let $\varphi$ be a smooth function: $[0,\infty) \to [0,1]$ with compact support in $[1/2,3]$ such that $\varphi(y)=1$ for $y \in [1,2]$. Then by the Mellin inversion formula (see, for example, \cite[Lemma 3.1]{Pra57}),
$\frac{1}{2\pi i} \int_{(1+\ep)} (\cM{\varphi})(s)y^{-s}ds=\varphi(y)$
for any $\ep>0$ and all $y>0$.
Then by positivity of $\lambda_{f \times \overline{f}}(\fr)$ and upon writing a dyadic sum in which $S$ takes values $2^j$ for $0 \le j \le \lfloor \log_2 R \rfloor$,
\begin{equation}\label{pole}
\sum_{\substack{\fr \in R(f) \\ \rN\fr \le R}} \frac{\lambda_{f \times \overline{f}}(\fr)}{\rN\fr}  \le  \sum_{S}\frac{1}{S} \sum_{\fr} \lambda_{f \times \overline{f}}(\fr) \varphi(\frac{\rN\fr}{S}) = \sum_{S}\frac{1}{S} \frac{1}{2\pi i} \int_{(1+\ep)} (\cM{\varphi})(s) S^s \left[ \sum_{\fr} \lambda_{f \times \overline{f}}(\fr) \rN\fr^{-s} \right] ds.
\end{equation}
Note that the inner sum over $\fr$ converges absolutely when $\Re(s)=1+\ep$. We truncate the integral to height $|t|=T'$, where $T'$ is a sufficiently large number (relative to $f,R,\varphi,\ep$). Then we move the line of integration to $\Re(s)=-U, |\Im(s)| \le T'$, where $U$ is also a sufficiently large number (relative to $T'$), picking up a simple pole at $s=1$. We choose $T',U$ sufficiently large such that the bound (\ref{tuchoice}) below holds.
Let $I(s)$ denote the integrand in (\ref{pole}) and recall the definition of the residue $s(f)$ in (\ref{sf}). The conclusion is
\begin{equation*}
 \frac{1}{2\pi i} \int_{(1+\ep)} I(s) ds = (\cM \varphi(1))Ss(f)+E_3
 \end{equation*}
 where
\begin{equation*}
 E_3 = \frac{1}{2\pi i} \left( \int_{-U-iT'}^{-U+iT'}+\int_{-U+iT'}^{1+\ep+iT'}-\int_{-U-iT'}^{1+\ep-iT'}+\int_{1+\ep-i\infty}^{1+\ep-iT'}+\int_{1+\ep+iT'}^{1+\ep+i\infty} \right) I(s) ds.
\end{equation*}

To estimate $\displaystyle{\sum_{\fr} \lambda_{f \times \overline{f}}(\fr) \rN\fr^{-s}=L^{\mathrm{ur}}(f \times \overline{f},s)}$ on $\Re(s)<0$, we first notice that
\begin{eqnarray*}
 |L^{\mathrm{ur}}(f \times \overline{f},s)| &=&\left| \frac{L(f\times \overline{f},s)}{L^{\mathrm{ra}}(f \times \overline{f},s)} \right|=|L(f\times \overline{f},s)|\prod_{\fp | \fq_{f \times \overline{f}}} \prod_{i=1}^{n} \prod_{j=1}^{n} \left|1-\alpha_i(\fp)\overline{\alpha_j(\fp)} \rN\fp^{-s}\right| \\
 & \ll & |L(f\times \overline{f},s)|\rN\fq_{f \times \overline{f}}^{-n^2\Re(s)}.
 \end{eqnarray*}
 By the functional equation (\ref{fe}), for $\Re(s)<0$,
 \[ |L(f\times \overline{f},s)|=(D_k^{n^2}\rN\fq_{f \times \overline{f}})^{\frac12-\Re(s)}\prod_{v \in S_\infty} \left| \frac{L(f_v \times \overline{f}_v,1-s)}{L(f_v \times \overline{f}_v,s)} \right| |L(f \times \overline{f},1-s)|. \]
 Using the equation (5.115) in \cite{IK04}, we have
 \[  \prod_{v \in S_\infty} \left| \frac{L(f_v \times \overline{f}_v,1-s)}{L(f_v \times \overline{f}_v,s)} \right| \ll  |\Im(s)|^{n^2(\frac12-\Re(s))}. \]
 Therefore,
 \[ |L^{\mathrm{ur}}(f \times \overline{f},s)| \ll (D_k^{n^2}\rN\fq_{f \times \overline{f}})^{\frac12-\Re(s)}  |\Im(s)|^{n^2(\frac12-\Re(s))}  \rN\fq_{f \times \overline{f}}^{-n^2\Re(s)}. \]
For $\Re(s)=-U, |\Im(s)| \le T'$, and for $|\Im(s)|=T', -U \le \Re(s) \le 1+\ep$, we know from (\ref{ms}) that $\cM{\varphi}(s)$ exhibits rapid decay. Thus, we can choose $T',U$ such that
\begin{equation}\label{tuchoice}
E_3=\frac{1}{2\pi i} \left( \int_{-U-iT'}^{-U+iT'}+\int_{-U+iT'}^{1+\ep+iT'}-\int_{-U-iT'}^{1+\ep-iT'}+\int_{1+\ep-i\infty}^{1+\ep-iT'}+\int_{1+\ep+iT'}^{1+\ep+i\infty} \right) I(s) ds=o(Ss(f)).
\end{equation}
Since $E_3=o(Ss(f))$, the right-hand side of (\ref{pole}) is now expressed as \[ \sum_{S}\frac{1}{S} \frac{1}{2\pi i} \int_{(1+\ep)} I(s) ds=\sum_{S}\frac{1}{S} (\cM \varphi(1))Ss(f)+E_3=\sum_{S} (s(f)+O(s(f)))\ll s(f)\log R. \] This finishes the proof of (\ref{112}).

For the lower bound (\ref{111}), since any integral ideal $\fr$ is in the support of $\varphi(\frac{\rN\fr}{S})$ for at most 3 dyadic values of $S$, we have
\begin{equation}\label{bfintjs}
\sum_{\substack{\fr \in R(f) \\ \rN\fr \le R}} \frac{\lambda_{f \times \overline{f}}(\fr)}{\rN\fr}  \ge  \frac13 \sum_{S}\frac{1}{2S} \sum_{\fr \in R(f)} \lambda_{f \times \overline{f}}(\fr) \varphi(\frac{\rN\fr}{S}) 
\end{equation}
where the sum over $S$ takes values $2^j$ for $\lfloor (1-\ep^{'})\log_2 R \rfloor \le j \le \lfloor \log_2 R \rfloor-1$ and $\ep^{'}>0$ is sufficiently small. The importance of our choice of the sum will be shown in (\ref{whyepprime}).
By Mellin inversion, the right-hand side of (\ref{bfintjs}) is equal to
\begin{equation}\label{intjs}
\sum_{S}\frac{1}{6S} \frac{1}{2\pi i} \int_{(1+\ep)} (\cM{\varphi})(s) S^s \left[ \sum_{\fr \in R(f)} \lambda_{f \times \overline{f}}(\fr) \rN\fr^{-s} \right] ds
\end{equation}
where $\ep>0$ is sufficiently small.

We truncate the integral to height $|t|=T^{''}$, a sufficiently large number (relative to $f,R,\varphi,\ep$). We choose $T^{''}$ such that the bound (\ref{tppchoice}) holds. Then we move the line of integration to $\Re(s)=1-2\delta+\ep, |\Im(s)| \le T^{''}$, where $\delta$ is as in (\ref{rfdef}). 
We denote the integrand in (\ref{intjs}) by $J(s)$ and have
\begin{equation*}
 \frac{1}{2\pi i} \int_{(1+\ep)} J(s) ds = (\cM\varphi(1))S\mathrm{Res}_{s=1}\left[ \sum_{\fr \in R(f)} \lambda_{f \times \overline{f}}(\fr) \rN\fr^{-s} \right]+E_4+E_5
 \end{equation*}
 where
 \begin{equation*}
 E_4=\frac{1}{2\pi i} \int_{1-2\delta+\ep-iT^{''}}^{1-2\delta+\ep+iT^{''}} J(s) ds
 \end{equation*}
 and
\begin{equation*}
E_5 = \frac{1}{2\pi i} \left( \int_{1-2\delta+\ep+iT^{''}}^{1+\ep+iT^{''}} - \int_{1-2\delta+\ep-iT^{''}}^{1+\ep-iT^{''}} + \int_{1+\ep-i\infty}^{1+\ep-iT^{''}} + \int_{1+\ep+iT^{''}}^{1+\ep+i\infty} \right) J(s) ds.
 \end{equation*}
Since $\cM\varphi(s)$ exhibits rapid decay, we can choose $T^{''}$ sufficiently large such that
\begin{equation}\label{tppchoice}
E_5=o(Ss(f)).
\end{equation}
Thus to verify (\ref{111}), it suffices to prove
\begin{equation}\label{suf1}
(\cM\varphi(1))S\mathrm{Res}_{s=1}\left[ \sum_{\fr \in R(f)} \lambda_{f \times \overline{f}}(\fr) \rN\fr^{-s} \right] \gg Ss(f)
\end{equation}
and 
$E_4=o(S).$

We define $\displaystyle{L^\ast(f \times \overline{f},s)=\sum_{\fr \in R(f)} \lambda_{f \times \overline{f}}(\fr) \rN\fr^{-s}}$ and consider the quotient $\displaystyle{\frac{L^{\mathrm{ur}}(f \times \overline{f},s)}{L^\ast(f \times \overline{f},s)}}$. Observe that for $\Re(s) \ge 1-2\delta+\ep$,
\begin{eqnarray*}
1 \ll \left| \frac{L^{\mathrm{ur}}(f \times \overline{f},s)}{L^\ast(f \times \overline{f},s)} \right|&=&\left| \prod_{\fp|\fP} (1-\lambda_{f \times \overline{f}}(\fp)\rN\fp^{-s})^{-1} \prod_{\fp: |\lambda_f(\fp)| \le \rN\fp^{-\delta}} (1-\lambda_{f \times \overline{f}}(\fp)\rN\fp^{-s})^{-1} \right| \\
& \asymp & \left| \prod_{\fp: |\lambda_f(\fp)| \le \rN\fp^{-\delta}} (1-\lambda_{f \times \overline{f}}(\fp)\rN\fp^{-s})^{-1} \right| \\
& \ll & \left| \prod_{\fp: |\lambda_f(\fp)| \le \rN\fp^{-\delta}} (1-\rN\fp^{-2\delta-s})^{-1} \right| \ll 1. 
\end{eqnarray*}
Hence, the quotient $\frac{L^{\mathrm{ur}}(f \times \overline{f},s)}{L^\ast(f \times \overline{f},s)}$ is absolutely convergent on $\Re(s) \ge 1-2\delta+\ep$, and in this region
\begin{equation}\label{lastlur}
L^\ast(f \times \overline{f},s) \asymp L^{\mathrm{ur}}(f \times \overline{f},s).
\end{equation}
Taking the residue at $s=1$, we know that (\ref{suf1}) holds.

By (\ref{lastlur}) and (\ref{ldl2}), we can estimate $E_4$ by
\begin{equation*} 
E_4
 \ll  \int_{1-2\delta+\ep-iT'}^{1-2\delta+\ep+iT'} |(\cM{\varphi})(s) S^s| |L^{\mathrm{ur}}(f \times \overline{f},s)| ds  \ll  \int_{1-2\delta+\ep-iT'}^{1-2\delta+\ep+iT'} |(\cM{\varphi})(s) S^s| |L(f \times \overline{f},s)| ds.
\end{equation*}
By the convexity bound (\ref{102}) and (\ref{bh}) in Lemma \ref{10}, rapid decay of $\cM\varphi(s)$, and the facts $S \ge R^{1-\ep^{'}}, R>q^C$, this is
\begin{eqnarray}\label{whyepprime}
\nonumber & \le & \int_{1-2\delta+\ep-iT'}^{1-2\delta+\ep+iT'} |(\cM{\varphi})(s) S^s|  \left[ \mathrm{Cond}(f \times \overline{f})(|\Im(s)|+2)^{n^2n_k} \right]^{\delta} ds \\
\nonumber & \le & \int_{1-2\delta+\ep-iT'}^{1-2\delta+\ep+iT'} |(\cM{\varphi})(s) S^s|  \left[ (\mathrm{Cond}(f))^{2n}(|\Im(s)|+2)^{n^2n_k} \right]^{\delta} ds \\
& \ll & S^{1-2\delta+\ep}q^{2nA\delta} \ll SR^{(-2\delta+\ep)(1-\ep^{'})+\frac{2nA\delta}{C}}=o(S).
\end{eqnarray}
The last equality holds if we choose $\ep>0,\ep^{'}>0$ sufficiently close to 0.
Therefore, $E_4=o(S)$.
This finishes the proof of (\ref{111}).
\end{proof}

\begin{proof}[Proof of Lemma \ref{9}]
We only prove the statements about $L^\flat(f,s)$ and $L^\sharp(f,s)$. The statements about $L^\flat(f \times g,s)$ and $L^\sharp(f \times g,s)$ follow analogously.

We define $L^\flat(f,s)$ as in (\ref{2.5}). Note that
\begin{equation*}
L^\sharp(f,s)=\frac{L^{\mathrm{ur}}(f,s)}{L^\flat(f,s)}=\Pi_1(s)\Pi_2(s)
\end{equation*}
where 
\begin{equation*}
\Pi_1(s)=\left( \prod_{\rN\fp<z} \prod_{j=1}^{n}(1-\alpha_j(\fp)\rN\fp^{-s})^{-1} \right), \ \ \Pi_2(s)= \prod_{\rN\fp \ge z} \frac{\prod_{j=1}^{n} (1-\alpha_j(\fp)\rN\fp^{-s})^{-1}}{1+\lambda_f(\fp)\rN\fp^{-s}}.
\end{equation*}
Since we assumed the Ramanujan-Petersson conjecture, we have $|\alpha_j(\fp)| \le 1$ so that each factor in the finite product $\Pi_1(s)$ is nonzero for any $\Re(s)>0$. Therefore, $\Pi_1(s)$ is holomorphic and has neither zero nor pole in $\Re(s)>0$, and is absolutely convergent for $\Re(s)>\ep$ for any $\ep>0$. For $\Pi_2(s)$, since 
$\lambda_f(\fp)=\sum_{j=1}^{n} \alpha_j(\fp)$ for every $p$, we have
\begin{equation*}
\Pi_2(s) = \prod_{\rN\fp \ge z} \frac{\prod_{j=1}^{n} (1-\alpha_j(\fp)\rN\fp^{-s})^{-1}}{(1-\lambda_f(\fp)\rN\fp^{-s})^{-1}} \cdot \frac{(1-\lambda_f(\fp)\rN\fp^{-s})^{-1}}{1+\lambda_f(\fp)\rN\fp^{-s}} =\prod_{\rN\fp \ge z} (1+O(\rN\fp^{-2s})) (1-\lambda_f^2(\fp)\rN\fp^{-2s})^{-1}.
\end{equation*}
Since we assumed the Ramanujan-Petersson conjecture, this is holomorphic and has neither zero nor pole in $\Re(s)>\frac12$, and is absolutely convergent for $\Re(s)>\frac12+\ep$ for any $\ep>0$. 
%For $\Pi_3(s)$, since $|\lambda_f(\fp)| \le n$ and for any rational prime $p$, there are at most $n_k$ prime ideals $\fp$ lying above $p$, we have
%\[ \Pi_3(s) \le n_k \prod_p (1+np^{-2s}). \]
%The right-hand side is holomorphic and has neither zero nor pole in $\Re(s)>\frac12$, and is absolutely convergent for $\Re(s)>\frac12+\ep$ for any $\ep>0$.
This finishes the proof of Lemma \ref{9}.
\end{proof}

\begin{proof}[Proof of Lemma \ref{12}]
We prove (\ref{3.7}). Fix $f,g$ and $\fr,\ft$ as in the Lemma. For squarefree $\fn$, (\ref{hd}) shows that
\begin{equation*}
\sum_{\fd|\fn} h(\fd)=\left( \prod_{\fp|(\fn,\fr)} \psi_f(\fp) \right) \left( \prod_{\fp|(\fn,\ft)} \psi_g(\fp) \right)=\psi_f((\fn,\fr))\psi_g((\fn,\ft)).
\end{equation*}
Moreover, recall the definition from (\ref{psidef}). We have
\[ \psi_{f,\fr}(\fn)\psi_{g,\ft}(\fn)=\mu_k(\fn)^2 \psi_f((\fn,\fr))\psi_g((\fn,\ft)).\]
Therefore, (\ref{3.7}) holds for all $\fn$.

We next prove (\ref{3.8}). 
If $g=\overline{f}$, then $\psi_g=\psi_f$ and for any $\fr \in R(f)$, $\ft \in R(\overline{f})$, squarefree $\fd|\fr\ft$,
\begin{equation*}
h(\fd)=\prod_{\substack{\fp|\fd \\ \fp \nmid (\fr,\ft)}} (\psi_f(\fp)-1) \prod_{\substack{\fp|\fd \\ \fp|(\fr,\ft)}} (\psi_f^2(\fp)-1).
\end{equation*}
Hence, by the definition of $\rho_f(\fd)$, and of $\psi_f(\fp)$,
\begin{eqnarray*}
h(\fd)\rho_f(\fd)|\lambda_f(\fd)|^2\rN\fd^{-1} &=& \prod_{\substack{\fp|\fd \\ \fp \nmid (\fr,\ft)}}(1+|\lambda_f(\fp)|^2\rN\fp^{-1})^{-1}|\lambda_f(\fp)|^2\rN\fp^{-1} (-\rN\fp|\lambda_f(\fp)|^{-2}-1) \\
& & \times \prod_{\substack{\fp|\fd \\ \fp|(\fr,\ft)}}(1+|\lambda_f(\fp)|^2\rN\fp^{-1})^{-1}|\lambda_f(\fp)|^2\rN\fp^{-1} (\rN\fp^2|\lambda_f(\fp)|^{-4}-1) \\
&=& \left( \prod_{\substack{\fp|\fd \\ \fp \nmid (\fr,\ft)}} (-1) \right) \left( \prod_{\substack{\fp|\fd \\ \fp|(\fr,\ft)}} (-\psi_f(\fp)-1) \right).
\end{eqnarray*}
Recalling that $h(\fd)$ is supported on squarefree divisors of $\fr\ft$, we have
\begin{equation*}
\sum_{\fd} h(\fd)\rho_f(\fd)|\lambda_f(\fd)|^2\rN\fd^{-1}
= \begin{cases}
0, & \text{ if } \fr \neq \ft \\
\prod_{\fp|\fr} (-\psi_f(\fp)), & \text{ if } \fr=\ft
\end{cases} 
\end{equation*}
which is equal to $\delta(\fr,\ft)|\psi_f(\fr)|$, as claimed.
\end{proof}
\end{subsection}

\end{section}

\begin{section}{Proof of Theorem \ref{main}}\label{pfofmainthm}

Now we proceed to prove Theorem \ref{main}, deducing it from the large sieve in Theorem \ref{14}.

\begin{subsection}{Reduction to well-spaced zeros}\label{redu}

To prove Theorem \ref{main}, we reduce our consideration to so-called well-spaced zeros. 
We partition the region $M(\alpha,T)$ into rectangles
\begin{equation}\label{rec}
R_j=[\alpha,1] \times [j\eta, (j+1)\eta],
\end{equation}
where $\eta$ is defined in (\ref{realeta}) and $j$ runs over all integers such that $-[\eta^{-1}T] \le j \le [\eta^{-1}T]$. Fix $q \ge 1$ and $S(q)$ as in Theorem \ref{main}. For each $f \in S(q)$, we arbitrarily choose one zero (if any) of $L(f,s)$ in $R_j$ for each $j$. Then we have a collection of zeros associated to each $f \in S(q)$. This collection is naturally divided into two subcollections -- one consisting of those zeros from all odd $j$ and another consisting of those from all even $j$. For each $f$, we define $Z(f)$ to be one of these subcollections with at least half of the chosen zeros. Note that for each $f$, the zeros in $Z(f)$ are $\eta$-well-spaced. 

The following lemma shows an upper bound for the number of zeros in each $R_j$.
\begin{lem}\label{rj}
Fix $\frac34 \le \alpha<1$ and $\eta$ as above. For a fixed $f \in S(q)$, each region $R_j$ can contain at most
\begin{equation}\label{pra}
 \ll (1-\alpha) \log (q(|\frac{2j+1}{2}\eta|+3))+1
 \end{equation}
zeros of $L(f,s)$. 
\end{lem}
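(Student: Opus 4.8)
The plan is to play a lower bound for the sum $\sum_{\rho}\Re\frac{1}{s_0-\rho}$, taken over the nontrivial zeros $\rho$ of $L(f,s)$ and evaluated at a point $s_0$ just to the right of $R_j$, against the standard upper bound for that sum coming from the Hadamard factorization of the completed $L$-function. Write $t_0=\frac{2j+1}{2}\eta$ for the center of the interval $[j\eta,(j+1)\eta]$, set $\mathcal{L}=\log\!\big(q(|t_0|+2)\big)$, and take $s_0=\sigma_0+it_0$ with $\sigma_0=1+x$ and
\[
x=(1-\alpha)+\mathcal{L}^{-1};
\]
since $\tfrac34\le\alpha<1$ we have $1<\sigma_0<3$. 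Every zero $\rho=\beta+i\gamma$ of $L(f,s)$ lying in $R_j$ satisfies $\alpha\le\beta\le1$ and $|\gamma-t_0|\le\tfrac{\eta}{2}$; write $N(R_j)$ for the number of such zeros.

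For the upper bound I would recall the standard identity obtained by logarithmically differentiating the Hadamard product of the entire, order-one function $\Lambda(f,s)=\mathrm{Cond}(f)^{s/2}\gamma(f,s)L(f,s)$ and using the usual relation $\Re b=-\sum_{\rho}\Re\frac{1}{\rho}$:
\[
\sum_{\rho}\Re\frac{1}{s_0-\rho}=\tfrac12\log\mathrm{Cond}(f)+\Re\frac{\gamma'}{\gamma}(f,s_0)+\Re\frac{L'}{L}(s_0),
\]
in which every term is nonnegative because $\Re s_0>1\ge\beta$. I would then bound the right-hand side term by term: $\tfrac12\log\mathrm{Cond}(f)\le\tfrac A2\log q\ll\mathcal{L}$; Stirling's formula applied to $\gamma(f,s)$ (the archimedean parameters being bounded in terms of $n,n_k$, as is already implicit in the convexity bound (\ref{convf})) gives $\Re\frac{\gamma'}{\gamma}(f,s_0)\ll_{n,n_k}\log(|t_0|+2)\le\mathcal{L}$; and since $\Re s_0>1$ the Dirichlet series of $-\frac{L'}{L}$ converges absolutely with coefficients bounded by $nn_k$ times von Mangoldt, so $\big|\frac{L'}{L}(s_0)\big|\le nn_k\sum_m\Lambda(m)m^{-\sigma_0}\ll\tfrac{1}{\sigma_0-1}=\tfrac1x$. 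Hence $\sum_{\rho}\Re\frac{1}{s_0-\rho}\ll_{n,n_k}\mathcal{L}+\tfrac1x$.

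For the lower bound, for each zero $\rho\in R_j$ I would estimate
\[
\Re\frac{1}{s_0-\rho}=\frac{\sigma_0-\beta}{(\sigma_0-\beta)^2+(\gamma-t_0)^2}\ \ge\ \frac{x}{4x^2+\eta^2/4},
\]
which follows from $x\le\sigma_0-\beta\le 2x$ (using $1-\alpha\le x$), $|\gamma-t_0|\le\tfrac\eta2$, and an elementary one-variable estimate. Summing over the $N(R_j)$ zeros in $R_j$ and comparing with the upper bound gives
\[
N(R_j)\ll\Big(4x+\frac{\eta^2}{4x}\Big)\Big(\mathcal{L}+\frac1x\Big)=4x\mathcal{L}+4+\frac{\eta^2\mathcal{L}}{4x}+\frac{\eta^2}{4x^2}.
\]
Now substituting $x=(1-\alpha)+\mathcal{L}^{-1}$, the first term equals $4(1-\alpha)\mathcal{L}+4$, and for the last two I would use $\mathcal{L}\ll\log(qT)$ (because $|t_0|\ll T$) together with $\eta\log(qT)=C$ to get $\eta\mathcal{L}\ll1$, whence $\frac{\eta^2\mathcal{L}}{4x}\le\frac{\eta^2\mathcal{L}^2}{4}\ll1$ and $\frac{\eta^2}{4x^2}\le\frac{\eta^2\mathcal{L}^2}{4}\ll1$. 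Therefore $N(R_j)\ll(1-\alpha)\mathcal{L}+1=(1-\alpha)\log\!\big(q(|t_0|+2)\big)+1$, and since $|t_0|+2\le\big|\tfrac{2j+1}{2}\eta\big|+3$ this is the claimed bound (\ref{pra}).

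I expect the only genuinely non-routine point to be the uniform bound on $\frac{\gamma'}{\gamma}(f,s_0)$, which needs the archimedean components of the representations to have bounded parameters — the normalization already underlying the convexity estimate of Lemma \ref{10}. The only other place asking for care is the choice of $x$: taking $x=(1-\alpha)+\mathcal{L}^{-1}$ is precisely what makes $4x\mathcal{L}=4(1-\alpha)\mathcal{L}+O(1)$ while simultaneously keeping $1/x\ll\mathcal{L}$ and $\eta^2/x^2\ll1$, so that all error terms collapse to $O(1)$.
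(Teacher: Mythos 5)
Your argument is correct, and it runs on the same underlying engine as the paper's proof of Lemma \ref{rj}: a Landau-type count comparing, at a point $s_0$ just to the right of $\Re(s)=1$, the lower bound $\gg 1/x$ contributed by each zero of the box against the upper bound $\ll \log(\mathfrak q(|t_j|+3)) + \frac{1}{\sigma_0-1}$ coming from the conductor and from $\Re\frac{L'}{L}(f,s_0) \ll \frac{1}{\sigma_0-1}$ under Ramanujan--Petersson (this last step is literally the paper's (\ref{62.7})). The organization, however, differs in a way worth noting. The paper splits into two cases according to whether $1-\alpha$ is smaller than half the width of the zero-free region of Lemma \ref{dzero} (in which case $R_j$ contains at most the one possible exceptional zero, producing the ``$+1$''), and otherwise runs the counting argument on a disc $G(t_j,1-\alpha)$ centered at $1+it_j$, quoting the prepackaged local formula (5.28) of \cite{IK04}. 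You instead work with a single reference point $\sigma_0 = 1+(1-\alpha)+\mathcal{L}^{-1}$ and the global Hadamard identity for $\Lambda(f,s)$: the extra shift $\mathcal{L}^{-1}$ makes the ``$+1$'' fall out of $4x\mathcal{L}=4(1-\alpha)\mathcal{L}+4$ automatically, so you need neither the case distinction nor the zero-free region at all, and your elementary estimate $\Re\frac{1}{s_0-\rho}\ge \frac{x}{4x^2+\eta^2/4}$ together with $\eta\mathcal{L}\ll 1$ cleanly absorbs the remaining error terms. Two small caveats, neither a gap relative to the paper: (i) your Hadamard identity as written assumes $\Lambda(f,s)$ is entire, so in the degenerate case $n=1$, $f$ trivial (where $L(f,s)=\zeta_k(s)$) one must add the pole terms $\frac{1}{s_0}+\frac{1}{s_0-1}$, which contribute only $O(1/x)$ --- the paper's (\ref{62.6}) carries these terms for exactly this reason; (ii) your bound $\Re\frac{\gamma'}{\gamma}(f,s_0)\ll_{n,n_k}\log(|t_0|+2)$ presumes the archimedean parameters are $O_{n,n_k}(1)$, but the paper makes the same implicit suppression when it passes from $\log(\mathfrak q\prod_i(|s+\kappa_i|+3))$ in (\ref{62.6}) to $\log(\mathfrak q(|t_j|+3))$ in the conclusion, so you are on equal footing and were right to flag it as the one delicate point.
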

Thus we can conclude that
\begin{equation*}
\sum_{f \in S(q)} N(f;\alpha,T) \ll 2((1-\alpha) \log (qT)+1) \sum_{f \in S(q)} |Z(f)|,
\end{equation*}
and we may restrict our attention to well-spaced zeros.
Since
$(1-\alpha) \log (qT)+1 \le e^{(1-\alpha) \log (qT)}=(qT)^{1-\alpha}$,
the proof of Theorem \ref{main} is reduced to proving
(\ref{main2})
for all $q \ge 1, T \ge 2$. Now we prove Lemma \ref{rj}.
The proof is similar with that of Lemma 2.1 in \cite[pp. 331-332]{Pra57}.

We recall the known zero-free region for automorphic $L$-functions; see \cite[Theorem 5.42]{IK04}.
\begin{lemma}\label{dzero}
Let $f \in (S(q))_{q \ge 1}$. Then there exists a constant $c>0$ depending only on $n,n_k$ such that $L(f,s)$ has no zeros in the region
\begin{equation}\label{zerofree}
\{s=\sigma+it \in \C: \sigma \ge 1-\frac{c}{\log (\mathrm{Cond}(f)(|t|+3))} \}
\end{equation}
except possibly one simple real zero $\beta_f<1$.
\end{lemma}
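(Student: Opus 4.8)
The statement is the classical de la Vallée Poussin zero-free region, and the plan is to prove it by the standard argument, in the form given in \cite[\S 5.7]{IK04}. Write $s=\sigma+it$ and suppose $\rho=\beta+i\gamma$ is a nontrivial zero of $L(f,s)$. The engine is the Rankin--Selberg $L$-function $L(f\times\overline{f},s)$: by Remark \ref{rm1} it is holomorphic on $\C$ apart from a simple pole at $s=1$, and, being a Rankin--Selberg square, its logarithmic derivative has non-negative Dirichlet coefficients; under the assumed Ramanujan--Petersson bound this is transparent, since the coefficient of $p^{-ks}$ in $-\frac{L'}{L}(f\times\overline{f},s)$ equals $\big|\sum_{j}\alpha_j(p)^k\big|^2\log p\ge0$. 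Assembling $-\frac{L'}{L}(\sigma,f\times\overline{f})$ together with its shifts by $i\gamma$ and $2i\gamma$ via the de la Vallée Poussin weights $3+4\cos\theta+\cos2\theta=\frac12|1+e^{i\theta}|^4\ge0$ produces a Dirichlet series with non-negative coefficients whose logarithmic derivative, at real $\sigma>1$, is $\ge0$; equivalently, one works with a suitable isobaric $L$-function formed from $\mathbf{1}$, $f$ and $f\times\overline{f}$ (cf. \cite[\S 5.7]{IK04}).

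Next I would carry out the usual complex-analytic accounting: apply the Hadamard factorization of the completed auxiliary $L$-function, whose error term is governed by its analytic conductor. By the conductor inequality (\ref{bh}) of Lemma \ref{10}, together with the elementary bounds for conductors of isobaric sums and of unitary twists, this analytic conductor $\mathfrak{q}$ satisfies $\log\mathfrak{q}\ll_{n,n_k}\log\!\big(\mathrm{Cond}(f)(|\gamma|+3)\big)=:\log\mathfrak{Q}$ --- the archimedean data, hence the dependence on $k$ through $n_k=[k:\Q]$, contributing only $O(1)$ once the fixed field $k$ is accounted for. Retaining the pole at $s=1$ and the zero at $s=\beta$ and discarding every other zero (each contributing a term of the favourable sign), I would extract from positivity an inequality of the shape
\[
0\;\le\;\frac{a}{\sigma-1}-\frac{b}{\sigma-\beta}+O\!\big(\log\mathfrak{Q}\big),\qquad 1<\sigma\le 1+c_0,
\]
with absolute constants satisfying $b>a$ --- the gain $b>a$ being precisely what the $3+4\cos\theta+\cos2\theta$ device supplies. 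Choosing $\sigma=1+\lambda/\log\mathfrak{Q}$ with $\lambda>0$ a suitably small absolute constant and solving for $\beta$ gives $\beta\le 1-c/\log\mathfrak{Q}$ with $c=c(nn_k)>0$, which is exactly the region (\ref{zerofree}).

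Finally, I would treat the real axis separately. When $\gamma=0$ the shifted factors collapse onto the real line, the pole at $s=1$ and the zero at $s=\beta$ become ``balanced'', and the inequality above no longer pushes $\beta$ away from $1$; this is the source of the possible exceptional real zero $\beta_f$. That there is at most one such zero and that it is simple follows by re-running the same inequality while keeping \emph{all} real zeros of $L(f,s)$ in $(1-c/\log\mathfrak{Q},\,1)$: each enters the auxiliary $L$-function with doubled multiplicity, so a total multiplicity $m\ge 2$ would make the zero contribution outweigh the pole term after the optimal choice of $\sigma$, a contradiction. The principal obstacle throughout is not the complex analysis, which is routine, but (i) exhibiting and justifying the auxiliary positive-coefficient $L$-function over the base field $k$, which rests on the Rankin--Selberg theory recalled in Section \ref{notations1} and on the conductor bound (\ref{bh}); and (ii) the genuine impossibility of ruling out the exceptional real zero by this method, which is why the statement must allow $\beta_f$.
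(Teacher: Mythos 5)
The paper itself does not prove Lemma \ref{dzero}: it is quoted verbatim from \cite[Theorem 5.42]{IK04} with no argument given, so your sketch is being measured against the standard proof in the literature rather than against anything in the text. Your outline of the complex-analytic bookkeeping, the conductor estimate, and the treatment of real zeros (running the positivity argument at $\gamma=0$, where each real zero of $L(f,s)$ enters the auxiliary function twice while the pole has order two, so total real multiplicity $\ge 3$ is impossible) is fine in spirit. The gap is at the central step, the claim that your positivity device yields constants with $b>a$.

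Concretely: the weights $3+4\cos\theta+\cos2\theta$ require unimodular coefficients and do not transfer to $\GL_n$. Read literally, your combination of $-\frac{L'}{L}(f\times\overline f,\sigma)$ with its shifts by $i\gamma$ and $2i\gamma$ is indeed nonnegative, but it detects zeros of $L(f\times\overline f,s)$, not of $L(f,s)$, so it repels the wrong zeros. Read via your ``equivalently'' clause, the isobaric object built only from $\mathbf{1}$, $f$ and $f\times\overline f$ is $D(s)=L(s,\Pi\times\widetilde\Pi)$ with $\Pi=\mathbf{1}\boxplus f|\cdot|^{i\gamma}$, i.e. $\zeta_k(s)\,L(f\times\overline f,s)\,L(f,s+i\gamma)\,L(\overline f,s-i\gamma)$; positivity holds, but the pole at $s=1$ has order $2$ while the zero at $s=\beta$ is only guaranteed with multiplicity $2$ (one from each twisted factor). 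With $a=b=2$ the inequality $0\le \frac{a}{\sigma-1}-\frac{b}{\sigma-\beta}+O(\log\mathfrak{Q})$ is satisfied for $\beta$ arbitrarily close to $1$ once $\sigma-1\asymp 1/\log\mathfrak{Q}$, so no zero-free region follows: the gain $b>a$ is exactly what this auxiliary function fails to supply. The standard argument behind \cite[Theorem 5.42]{IK04} instead takes the Rankin--Selberg square of $\mathbf{1}\boxplus f|\cdot|^{i\gamma}\boxplus\overline f|\cdot|^{-i\gamma}$, whose expansion also contains $L(f\times f,s+2i\gamma)$ and $L(\overline f\times\overline f,s-2i\gamma)$ --- the convolution of $f$ with \emph{itself}, not with $\overline f$. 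Then the pole at $s=1$ has order $3$ while $\beta$ occurs with multiplicity $4$, giving the strict gain; and the possible pole of $L(f\times f,s+2i\gamma)$ at $s=1-2i\gamma$, relevant exactly when $f$ is self-dual and $|\gamma|$ is tiny, is what forces the allowance of one simple real exceptional zero. Without this $f\times f$ ingredient (and the attendant Rankin--Selberg analytic input for it), your sketch does not prove the main assertion that there are no complex zeros in the region.
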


We let 
\begin{equation}\label{cvalue}
C=\frac{c}{6}.
\end{equation}
Fix $q,T$. Choose $\eta=\frac{C}{\log qT}$ as in (\ref{realeta}). 
We denote $t_j=\frac{2j+1}{2}\eta$ and $r=1-\alpha$. 

If $r<\frac{c}{2\log (\mathrm{Cond}(f)(|t_j|+3))}$, we know that $R_j$ lies completely in the standard zero free region (\ref{zerofree}) and there is at most one zero in the region. Therefore, each $R_j$ contains at most one zero of $L(f,s)$ and Lemma \ref{rj} holds in this case.

Now we assume that $r \ge \frac{c}{2\log (\mathrm{Cond}(f)(|t_j|+3))}$. Let $G(t_j,r)$ be the circular disc
\begin{equation}\label{62.3}
\{s: |s-(1+it_j)| \le r\}
\end{equation}
and let $Q(t_j,r)$ denote the number of zeros of $L(f,s)$ in the interior of the circle $G(t_j,r)$. 
Since $r \ge \frac{c}{2\log (\mathrm{Cond}(f)(|t_j|+3))} \ge \frac{c}{3\log (\mathrm{Cond}(f) T)}=\frac{2C}{\log (\mathrm{Cond}(f) T)}$, $G(t_j,r)$ contains the region $R_j$ and it suffices to show that
\begin{equation}\label{62.5}
Q(t_j,r) \ll r\log \mathrm{Cond}(f) (|t_j|+3).
\end{equation}

Since $\alpha \ge \frac34$, we have $r \le \frac14$. From (5.28) of \cite[Chapter 5]{IK04}, we have for $-\frac12 \le \Re(s) \le 2$,
\begin{equation}\label{62.6}
\frac{L'}{L}(f,s)+\frac{1}{s}+\frac{1}{s-1}- \sum_{\substack{v \in S_\infty \\ |s+\mu_{f}(v,i)|<1}} \frac{1}{s+\mu_{f}(v,i)}-\sum_{|s-\rho|<1} \frac{1}{s-\rho} \ll \log (\rN \fq_f \prod_{v \in S_\infty } \prod_{i=1}^n  (1+|it+\mu_{f}(v,i)|^{d(v)})),
\end{equation}
where $\mu_{f}(v,i)$ and $d(v)$ are parameters in the analytic conductor of $f$; see (\ref{condf}).
We put $s=1+r+it_j$ (so $\sigma=\Re(s)=1+r$) and consider the real part of this formula. Since $\Re(\mu_{f}(v,i)) >-1$ (see p.94 of \cite{IK04}) and the Ramanujan-Petersson conjecture is assumed, we have
\begin{equation}\label{62.7}
\Re \frac{L'}{L}(f,s) \le \left| \frac{L'}{L}(f,s) \right| \le \sum_\fm \frac{|\Lambda_f(\fm)|}{\rN\fm^\sigma} \le -n\frac{\zeta_k^{'}}{\zeta_k}(\sigma) \ll \frac{1}{\sigma-1}=\frac{1}{r}
\end{equation}
where $\Lambda_f(\fn)$ is the von Mangoldt function associated to $f$, which is supported on prime powers, for which it satisfies $\Lambda_f(\fp^k)=\sum_{i=1}^{n} \alpha_i(\fp)^k \log \rN\fp$ (see \cite[(5.26)]{IK04}). The fact $\Re(\mu_{f}(v,i)) >-1$ also gives the bound 
\[ \Re \sum_{\substack{v \in S_\infty \\ |s+\mu_{f}(v,i)|<1}} \frac{1}{s+\mu_{f}(v,i)} \le \frac{nn_k}{r} \ll \frac{1}{r}. \]

For $s=1+r+it_j$ we always have $\Re(1/(s-\rho))>0$ for any zero $\rho$ of $L(f,s)$. For $r \le \frac14$, because every $\rho=\beta+i\gamma \in G(t_j,r)$ is also in $|s-\rho| \le 1$ and because of
\begin{equation}\label{62.9}
\Re \left( \frac{1}{s-\rho} \right)=\frac{\Re(s-\rho)}{|s-\rho|^2} \ge \frac{r}{(2r)^2}=\frac{1}{4r},
\end{equation}
we have
\begin{equation}\label{62.8}
\Re \left( \sum_{|s-\rho| < 1} \frac{1}{s-\rho} \right) \ge \Re \left( \sum_{\rho \in G(t_j,r)} \frac{1}{s-\rho} \right) \ge \frac{Q(t_j,r)}{4r}.
\end{equation}
We put this into (\ref{62.6}) with $s=1+r+it_j$ and it follows that
\[ \frac{Q(t_j,r)}{4r} \ll \frac{1}{r}+O(\log \mathrm{Cond}(f)(|t_j|+3)). \]
For $r \ge \frac{c}{2\log (\mathrm{Cond}(f)(|t_j|+3))}$ the right side is $\ll \log \mathrm{Cond}(f)(|t_j|+3)$ and (\ref{62.5}) holds. 
This finishes the proof of Lemma \ref{rj}. 
\end{subsection}

\begin{subsection}{Key lemmas for the zero detector $z_\fr(f,s)$}
The proof of Theorem \ref{main} has now been reduced to proving (\ref{main2}) for sets $Z(f)$ of $\eta$-well-spaces zeros of $L(f,s)$. We now focus on developing the precise definition of the zero detector $z_\fr(f,s)$ and proving its key properties, leading to the proof of Lemma \ref{181}.

Let $1 \le w<y$ be two parameters to be specified later (see (\ref{par1}) and (\ref{par2})). We use the Selberg weights
\begin{equation*}
\lambda_\fd:=\mu_k(\fd)m(\rN\fd)
\end{equation*}
where for a positive integer $d$,
\begin{equation}\label{defmd}
m(d):=
\begin{cases}
1, & \text{ if } d \le w, \\
\frac{\log(y/d)}{\log(y/w)}, & \text{ if } w \le d \le y, \\
0, & \text{ if } y<d.
\end{cases}
\end{equation}
Then we define
\begin{equation*}
\Delta(\fn):=\sum_{\fd|\fn} \lambda_\fd.
\end{equation*}
Note that $\Delta(\fn)=0$ for $1<\rN\fn \le w$.

Fix $q \ge 1$ and $f \in S(q)$. Fix $\alpha,T$ and recall the region $M(\alpha,T)$ defined in (\ref{mat}).
We recall that we shall use $z_\fr(f,s)$ defined in (\ref{zr}) as our zero detector for zeros of $L(f,s)$ inside the region $M(\alpha,T)$. For a real number $x>y$ to be chosen in (\ref{par2}), let 
\begin{equation*}
z_\fr(f,s):=\sideset{}{_{}^{\flat}}\sum_{\substack{w \le \rN\fn \le x \\ (\fn,\fP)=1}} \Delta(\fn) \psi_{f,\fr}(\fn)e^{-\rN\fn(\log qT)^2/x}\lambda_f(\fn)\rN\fn^{-s},
\end{equation*}
i.e, this now makes the expression (\ref{zr}) precise (up to the choices of $w,y,x$), with the choice
\begin{equation}\label{par0}
a_\fn=\Delta(\fn)e^{-\rN\fn(\log qT)^2/x}.
\end{equation}
Recall that the notation $\sum^\flat$ denotes a sum over squarefree ideals. The following proposition explains the reason for the name ``zero detector''.

\begin{prop}\label{18}
Fix $q \ge 1$. Let $f \in S(q)$, $\fr \in R(f)$ with $\rN\fr \le R$, and fix $T \ge 1$, $\frac12 \le \alpha < 1$. If $\rho \in M(\alpha,T)$ is a zero of $L(f,s)$, then
\begin{equation}\label{zdbound}
1 \ll_{\ep}z_\fr(f,\rho),
\end{equation}
provided that
\begin{eqnarray}\label{4.4}
& & x \ge (\log qT)^2(yq^{A/2}T^{nn_k/2}R^{1+4\delta})^{1/(2\alpha-1)+\ep} \\
\nonumber & & \log (yR) \ll \log (qT).
\end{eqnarray}
\end{prop}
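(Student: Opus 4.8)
The plan is to realize $z_r(f,s)$, up to two explicit simple pieces, as a contour integral that vanishes identically at a zero $\rho$, thereby forcing $|z_r(f,\rho)|$ to be close to $1$. First I would complete the sum: since $\Delta(n)=0$ for $1<n\le w$,
\begin{equation*}
z_r(f,s)=\widetilde z(f,s)-e^{-(\log qT)^2/x}-\mathcal T(f,s),
\end{equation*}
where $\widetilde z(f,s)=\sum_{\substack{n\ge1,\ (n,P)=1\\ n\text{ sqfree}}}\Delta(n)\psi_{f,r}(n)\lambda_f(n)e^{-n(\log qT)^2/x}n^{-s}$ is the full smoothed sum, $e^{-(\log qT)^2/x}=1+o(1)$ is the $n=1$ term (using that (\ref{4.4}) makes $x$ much larger than $(\log qT)^2$), and $\mathcal T(f,s)$ is the tail over $n>x$. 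The tail is harmless: $|\Delta(n)|\le\tau(n)$, $|\psi_{f,r}(n)\lambda_f(n)|\ll_\ep R^{1+\delta}n^\ep$ (combining $|\psi_f(p)\lambda_f(p)|<p^{1+\delta}$ for $p\mid r$ with $r\in R(f)$, and $|\lambda_f(p)|\le nn_k$ otherwise), while for $n>x$ the Gaussian factor is $\le e^{-(\log qT)^2}$, which beats every polynomial; so $\mathcal T(f,\rho)=o(1)$. It thus suffices to prove $\widetilde z(f,\rho)=o(1)$.

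To that end I would expand $\widetilde z(f,s)$ by Mellin inversion using $e^{-u}=\frac1{2\pi i}\int_{(2)}\Gamma(w)u^{-w}\,dw$: with $\beta=(\log qT)^2/x$,
\begin{equation*}
\widetilde z(f,s)=\frac1{2\pi i}\int_{(2)}Z(f,s+w)\,\Gamma(w)\,\beta^{-w}\,dw,\qquad Z(f,s)=\sum_{\substack{n\ge1,\ (n,P)=1\\ n\text{ sqfree}}}\Delta(n)\psi_{f,r}(n)\lambda_f(n)n^{-s}.
\end{equation*}
The structural heart of the matter is the factorization $Z(f,s)=L^\flat(f,s)\,M(f,r,s)$: writing $\Delta(n)=\sum_{d\mid n}\mu(d)m(d)$ and $n=de$ with $(e,dP)=1$, the $e$-sum becomes $\prod_{p\nmid dP}(1+\psi_{f,r}(p)\lambda_f(p)p^{-s})$, which recombines with $L^\flat(f,s)=\prod_{p\ge z}(1+\lambda_f(p)p^{-s})$ of Lemma \ref{9} to leave
\begin{equation*}
M(f,r,s)=\sum_{\substack{d\le y,\ (d,P)=1\\ d\text{ sqfree}}}\mu(d)m(d)\psi_{f,r}(d)\lambda_f(d)d^{-s}\prod_{p\mid dr}(1+\lambda_f(p)p^{-s})^{-1}\prod_{\substack{p\mid r\\ p\nmid d}}(1+\psi_f(p)\lambda_f(p)p^{-s}).
\end{equation*}
Since $z\ge n^4n_k^4$ (cf.\ (\ref{zfirst})) forces $|\lambda_f(p)|\le p^{1/4}$ for $p\ge z$, the Euler products here are pole-free in $\Re(s)>\tfrac12$, so $M(f,r,s)$ is holomorphic there; and on $\Re(s)=\tfrac12+\ep$ a direct estimate — each $p\mid r$ contributing $\ll p^{1/2+\delta}$ via $|\psi_f(p)\lambda_f(p)|<p^{1+\delta}$, the remaining $d$-sum contributing $\ll y^{1/2+\ep}$ — gives $M(f,r,\tfrac12+\ep+it)\ll_\ep y^{1/2}R^{1/2+\delta+\ep}$, uniformly in $t$, $f$, and $r$.

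Finally I would shift the contour from $\Re(w)=2$ to the line $\Re(\rho+w)=\tfrac12+\ep$; for $\alpha$ in the relevant range ($\ge\tfrac34$) this means $\Re(w)=\tfrac12+\ep-\Re(\rho)\in(-\tfrac12,0)$, so the only pole crossed is the simple pole of $\Gamma$ at $w=0$. Its residue is $Z(f,\rho)=L^\flat(f,\rho)M(f,r,\rho)$, and this is $0$: indeed $\rho$, having $\Re(\rho)\ge\alpha>\tfrac12$, is a zero of $L^{\mathrm{ur}}(f,s)=L^\flat(f,s)L^\sharp(f,s)$ with $L^\sharp$ nonvanishing there (Lemma \ref{9}, Section \ref{notations1}), hence a zero of $L^\flat(f,\cdot)$. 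Therefore $\widetilde z(f,\rho)$ equals the shifted integral, which I would bound using the convexity estimate $L^\flat(f,\rho+w)\ll_\ep\bigl(q^{A}(|\Im(\rho+w)|+2)^{nn_k}\bigr)^{1/4+\ep}$ (from (\ref{convf}) and Lemma \ref{9}), the bound on $M$ above, the rapid decay of $\Gamma(w)$ in $\Im(w)$ (which makes the $w$-integral converge and turns $|\Im(\rho)|\le T$ into a factor $T^{nn_k/4+\ep}$), and $|\beta^{-w}|=\bigl((\log qT)^2/x\bigr)^{\Re(\rho)-1/2-\ep}$. Since $\Re(\rho)\ge\alpha$ and $x>(\log qT)^2$, this yields
\begin{equation*}
\widetilde z(f,\rho)\ll_\ep\bigl((\log qT)^2/x\bigr)^{\alpha-\frac12-\ep}\bigl(q^{A/2}T^{nn_k/2}yR^{1+2\delta}\bigr)^{1/2+\ep},
\end{equation*}
and a short computation (raising to the power $1/(2\alpha-1)$, using $\log(yR)\ll\log(qT)$ to absorb stray logarithms) shows this is $\ll x^{-\ep'}=o(1)$ precisely under hypothesis (\ref{4.4}) — the $R^{1+4\delta}$ appearing there being a harmless over-estimate of the $R^{1+2\delta}$ actually needed. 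Combining everything, $z_r(f,\rho)=-1+o(1)$, so $1\ll_\ep|z_r(f,\rho)|$, as required.

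The step I expect to be the main obstacle is the factorization and, above all, the uniform size bound for the mollifier $M(f,r,s)$ on $\Re(s)=\tfrac12+\ep$. One must keep the non-multiplicative Selberg truncation $m(d)$ separate from the multiplicative pieces $\psi_{f,r}$ and $\lambda_f$; verify that $M$ is holomorphic in $\Re(s)>\tfrac12$ (which is exactly what pins down the constraint $z\ge n^4n_k^4$); and control the product of the $\le\omega(r)$ large Euler factors — each $|\psi_f(p)|$ being as large as $p^{1+2\delta}$ — so that the total stays of size $R^{1/2+O(\delta)}$ uniformly over $f\in S(q)$ and $r$. Once $M$ is understood, the Mellin expansion, the contour shift, the vanishing of the residue at $w=0$ (the one point where $L(f,\rho)=0$ enters), and the convexity-based estimate of the shifted integral are routine.
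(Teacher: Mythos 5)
Your proposal is correct and follows essentially the same route as the paper: complete the smoothed sum, apply Mellin inversion with $\Gamma$, factor the resulting Dirichlet series as $L^\flat(f,\cdot)$ times the mollifier (your $M(f,r,s)$ is exactly the paper's $M_r(f,s)$, and your bound $y^{1/2+\ep}R^{1/2+\delta+\ep}$ is even slightly sharper than the paper's Lemma \ref{mollifierbd}), shift the contour past $w=0$ where the $\Gamma$-pole is killed by the vanishing of $L^\flat(f,\rho)$, and estimate via convexity to get $z_r(f,\rho)=-e^{-1/X}+o(1)$. The only cosmetic difference is that the paper takes the shifted abscissa $\tfrac12+\ep''-\alpha$ with $0<\ep''<\alpha-\tfrac12$ so the argument covers all $\alpha>\tfrac12$, whereas you phrase the contour position only for $\alpha\ge\tfrac34$; the same choice of small $\ep$ fixes this trivially.
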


\begin{rem}\label{rema}
Lemma \ref{181} in Section \ref{secofls} follows after applying Lemma \ref{11} (under the assumption $R>q^C$), and then Proposition \ref{18}. In particular,
\begin{eqnarray*}
\nonumber \sum_{f \in S(q)}|Z(f)| &=& \frac{1}{\log R} \sum_{f \in S(q)} \frac{|Z(f)|}{s(f)}s(f)\log R \ll \frac{1}{\log R} \sum_{f \in S(q)} \frac{|Z(f)|}{s(f)}\sum_{\substack{\fr \in R(f) \\ \rN\fr \le R}} \frac{1}{|\psi_f(\fr)|}  \\
& \ll & \frac{1}{\log R} \sum_{f \in S(q)} \frac{1}{s(f)} \sum_{\rho \in Z(f)} \sum_{\substack{\fr \in R(f) \\ \rN\fr \le R}} \frac{1}{|\psi_f(\fr)|} \left| z_\fr(f,\rho) \right|^2.
\end{eqnarray*}
\end{rem}

For the proof of Lemma \ref{201} in Section \ref{secofls}, we need the following lemma, which is the analog of a Graham's lemma. We prove the lemma in \cite{An22}.
\begin{lem}[Corollary 1.2 in \cite{An22}]\label{15}
For any $\alpha$ with $1/2<\alpha<1$, for all $x \ge 1$,
\begin{equation*}
\sideset{}{_{}^{\flat}}\sum_{\rN\fn \le x} \Delta(\fn)^2\rN\fn^{1-2\alpha} \ll \frac{\log(x/w)}{\log(y/w)}x^{2-2\alpha}.
\end{equation*}
\end{lem}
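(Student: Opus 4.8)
\emph{Proof sketch.} This is Lemma~9 of \cite{Gr81}; here is the route I would take. Expand the square, using that $\lambda_d=\mu(d)m(d)$ is supported on squarefree $d\le y$, and write $\ell=[d_1,d_2]$:
\[
\sum_{n\le x}\Delta(n)^2 n^{1-2\alpha}
=\sum_{d_1,d_2\le y}\lambda_{d_1}\lambda_{d_2}\sum_{\substack{n\le x\\ \ell\mid n}}n^{1-2\alpha}.
\]
The inner sum equals $\ell^{1-2\alpha}\sum_{m\le x/\ell}m^{1-2\alpha}$ and vanishes unless $\ell\le x$; since $1-2\alpha\in(-1,0)$, comparison with $\int_0^M t^{1-2\alpha}\,dt$ gives $\sum_{m\le M}m^{1-2\alpha}\le(2-2\alpha)^{-1}M^{2-2\alpha}$ for all $M\ge1$, with a matching lower bound of the same order once $M$ is large. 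Carrying out the $n$-summation thus reduces the problem to estimating a Selberg-type quadratic form $\sum_{d_1,d_2}\lambda_{d_1}\lambda_{d_2}/[d_1,d_2]$, weighted by a factor of order $x^{2-2\alpha}$, together with error terms coming from the truncations $\lfloor x/[d_1,d_2]\rfloor$ and from the lower-order part of $\sum_{m\le M}m^{1-2\alpha}$.

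The log-free saving comes from cancellation in the quadratic form: writing $[d_1,d_2]^{-1}=(d_1d_2)^{-1}\sum_{e\mid(d_1,d_2)}\varphi(e)$ diagonalizes it as $\sum_e\varphi(e)\bigl(\sum_{e\mid d}\lambda_d/d\bigr)^2$, and the piecewise log-linear shape of $m$ makes the inner linear forms behave essentially like those of the optimal Selberg weights, so that $\sum_{d_1,d_2}\lambda_{d_1}\lambda_{d_2}/[d_1,d_2]$ is smaller than its trivial size by a power of $\log(y/w)$. An equivalent and often cleaner route writes $m(d)=\frac1{\log(y/w)}\int_w^y\mathbf 1_{\{d\le t\}}\frac{dt}{t}$, applies Cauchy--Schwarz to take one integration outside so that $\Delta(n)^2\le\frac1{\log(y/w)}\int_w^y g_t(n)^2\,\frac{dt}{t}$ with $g_t(n)=\sum_{d\mid n,\,d\le t}\mu(d)$ a truncated M\"obius sum, and then estimates $\sum_{n\le x}g_t(n)^2 n^{1-2\alpha}$ via mean-square bounds for $g_t$; the factor $\log(x/w)/\log(y/w)$ in the statement then emerges from the outer integration against $\frac{dt}{t}$ over $[w,y]$.

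The step I expect to be the real work is controlling everything uniformly so as to obtain exactly the stated shape (in the range $x\gtrsim y$, where the right-hand side is $\gtrsim x^{2-2\alpha}\ge1$ and so can absorb the contribution $\Delta(1)^2=1$ of the term $n=1$). The subtle point is that the $n^{1-2\alpha}$ weighting interacts non-trivially both with the $\Delta$-structure and with the diophantine error from replacing $\lfloor x/[d_1,d_2]\rfloor$ by $x/[d_1,d_2]$, which trivially is only $O\bigl((\sum_d|\lambda_d|)^2\bigr)=O(y^2)$; so one must split the $n$-range around $w$ and $y$ (only $n=1$ surviving for $n\le w$, using $|\Delta(n)|\ll1$ in the middle range, and exploiting the constraint $[d_1,d_2]\le x$ to trade $y^2$ against a bound of size $x(\log x)^{O(1)}$), and track the resulting $\log$-powers carefully. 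Since we only apply the lemma with $w,y,x$ comparable powers of $qTR$, so that $\log(x/w)\asymp\log(y/w)\asymp\log qT$, this precision is inessential for us, and one may simply cite \cite[Lemma~9]{Gr81}.
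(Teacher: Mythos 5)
The paper offers no proof of this lemma: it is imported verbatim as Lemma 9 of \cite{Gr81} (``we need the following lemma of Graham''), which is exactly the fallback you end with, so your proposal matches the paper's treatment, and in the paper's application one indeed has $\log(x/w)\asymp\log(y/w)\asymp\log qT$ so the precise shape is not delicate. Your sketch of how such bounds are proved (expanding to the Selberg quadratic form $\sum_{d_1,d_2}\lambda_{d_1}\lambda_{d_2}/[d_1,d_2]$ with error control, or the integral representation of $m(d)$ plus truncated M\"obius mean squares) is a reasonable outline, and the uniform error-term control you flag as ``the real work'' is precisely the content of Graham's own argument rather than anything reproduced in this paper.
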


\begin{rem}\label{remb}
Lemma \ref{201} will follow immediately by Lemma \ref{15}, provided that
\begin{equation}\label{2.38}
\log x \ll \log(qT), \ \ \ \log(\frac{x}{w}) \ll \log(\frac{y}{w}).
\end{equation}
In particular, under these assumptions, the left-hand side of (\ref{201eqn}) is bounded by
\[ \ll \log(qT)\sideset{}{_{}^{\flat}}\sum_{\rN\fn \le x} |\Delta(\fn)|^2e^{-2\rN\fn(\log qT)^2/x}\rN\fn^{1-2\alpha} \ll (\log qT) \sideset{}{_{}^{\flat}}\sum_{\rN\fn \le x} |\Delta(\fn)|^2\rN\fn^{1-2\alpha} \ll (\log qT)x^{2(1-\alpha)}. \]
\end{rem}

\end{subsection}

\begin{subsection}{Proof of Proposition \ref{18}}
Fix the parameters as stated in Proposition \ref{18}. For simplicity of notation, denote $X=x(\log qT)^{-2}$. By Mellin inversion (see e.g., \cite[Lemma 3.2]{Pra57}), for $y>0,b>0$, we have
\begin{equation}\label{pra32}
e^{-y}=\int_{(b)} \Gamma(s)y^{-s}ds.
\end{equation}
Let $\rho \in M(\alpha,T)$ be a zero of $L(f,s)$. Using (\ref{pra32}), we have
\begin{eqnarray}\label{pra32appl}
\nonumber & & e^{-1/X}+z_\fr(f,\rho)+\sideset{}{_{}^{\flat}}\sum_{\substack{(\fn,\fP)=1 \\ \rN\fn > x}} \Delta(\fn)\psi_{f,\fr}(\fn)e^{-\rN\fn/X}\lambda_f(\fn)\rN\fn^{-\rho}=\sideset{}{_{}^{\flat}}\sum_{\substack{(\fn,\fP)=1 }} \Delta(\fn)\psi_{f,\fr}(\fn)e^{-\rN\fn/X}\lambda_f(\fn)\rN\fn^{-\rho} \\
&=& \frac{1}{2\pi i}\int_{(3)} \sideset{}{_{}^{\flat}}\sum_{\substack{(\fn,\fP)=1}} \Delta(\fn)\psi_{f,\fr}(\fn)\lambda_f(\fn)\rN\fn^{-(s+\rho)} \Gamma(s)X^s ds.
\end{eqnarray}
We will show in (\ref{rhso1}) and in (\ref{ngreaterx}) that the right-hand side of (\ref{pra32appl}) and the sum over $\rN\fn>x$ are both $o(1)$ (if we choose $qT$, hence $X$, large enough). Since $e^{-1/X} \gg 1$, we can conclude Proposition \ref{18} holds.

We directly compute
\begin{eqnarray*}
& &\sideset{}{_{}^{\flat}}\sum_{\substack{(\fn,\fP)=1 }} \Delta(\fn)\psi_{f,\fr}(\fn)\lambda_f(\fn)\rN\fn^{-s} \\
&=& \sideset{}{_{}^{\flat}}\sum_{\substack{(\fd,\fP)=1 }} \lambda_\fd \psi_{f,\fr}(\fd)\lambda_f(\fd)\rN\fd^{-s}\sideset{}{_{}^{\flat}}\sum_{\substack{(\fn,\fd\fP)=1 }} \psi_{f,\fr}(\fn)\lambda_f(\fn)\rN\fn^{-s} \\
&=& \sideset{}{_{}^{\flat}}\sum_{\substack{(\fd,\fP)=1 }} \lambda_\fd \psi_{f,\fr}(\fd)\lambda_f(\fd)\rN\fd^{-s} \prod_{(\fp,\fd\fP)=1}(1+\psi_{f,\fr}(\fp)\lambda_f(\fp)\rN\fp^{-s}) \\
&=& \sideset{}{_{}^{\flat}}\sum_{\substack{(\fd,\fP)=1 }} \lambda_\fd \psi_{f,\fr}(\fd)\lambda_f(\fd)\rN\fd^{-s} \prod_{\fp|\fr,\ \fp \nmid \fd} (1+\psi_f(\fp)\lambda_f(\fp)\rN\fp^{-s}) \prod_{(\fp,\fd\fr\fP)=1}(1+\lambda_f(\fp)\rN\fp^{-s})  \\
&=& \sideset{}{_{}^{\flat}}\sum_{\substack{(\fd,\fP)=1 }} \lambda_\fd \psi_{f,\fr}(\fd)\lambda_f(\fd)\rN\fd^{-s}  \prod_{\fp|\frac{\fr}{(\fr,\fd)}}(1+\psi_f(\fp)\lambda_f(\fp)\rN\fp^{-s}) L^\flat(f,s) \prod_{\fp|\fr\fd}(1+\lambda_f(\fp)\rN\fp^{-s})^{-1} \\
&=& L^\flat(f,s)M_\fr(f,s)
\end{eqnarray*}
where we define the ``mollifier''
\begin{equation*}
M_\fr(f,s):=\sideset{}{_{}^{\flat}}\sum_{\substack{(\fd,\fP)=1}} \lambda_\fd \psi_{f,\fr}(\fd)\lambda_f(\fd)\rN\fd^{-s} \prod_{\fp|\frac{\fr}{(\fr,\fd)}}(1+\psi_f(\fp)\lambda_f(\fp)\rN\fp^{-s}) \prod_{\fp|\fr\fd}(1+\lambda_f(\fp)\rN\fp^{-s})^{-1}.
\end{equation*}
By (\ref{defmd}), the sum in this definition is supported only on squarefree ideals $\fd$ such that $1 \le \rN\fd \le y$, $(\fd,\fP)=1$.
Therefore, the right-hand side of (\ref{pra32appl}) is
\begin{equation*}
=\frac{1}{2\pi i}\int_{(3)} L^\flat(f,s+\rho)M_\fr(f,s+\rho) \Gamma(s)X^s ds.
\end{equation*}

Let $0<\ep^{''}<\alpha-\frac12$ be a sufficiently small number and let $\sigma:=\frac12+\ep^{''}-\alpha$.
We move the line of integration to $\Re(s)=\sigma$, and obtain
\begin{equation}\label{3tosigma}
\frac{1}{2\pi i}\int_{(3)}L^\flat(f,s+\rho)M_\fr(f,s+\rho)\Gamma(s)X^s ds=\frac{1}{2\pi i}\int_{(\sigma)}L^\flat(f,s+\rho)M_\fr(f,s+\rho)\Gamma(s)X^s ds
\end{equation}
because at $s=0$ the pole of $\Gamma(s)$ is cancelled by the zero of $L^\flat(f,s+\rho)$. Since $\sigma+\Re(\rho)>\sigma+\alpha=\frac12+\ep^{''}$, we have by the convexity bound (\ref{convf})
\begin{equation*}
|L^\flat(f,s+\rho)| \ll_\ep \left( q^A(|t+\Im(\rho)|+2)^{nn_k} \right)^{\frac14-\frac{\ep^{''}}{2}+\ep}
\end{equation*}
for $s=\sigma+it$; here we use the parameter $A$ associated to the family $(S(q))_{q \ge 1}$.

For the estimation of $M_\fr(f,s+\rho)$, we need the following lemma.

\begin{lem}\label{mollifierbd}
Let $f \in S(q)$, $\fr \in R(f)$. For any $z \ge 4n^2n_k^2$, any $s \in \C$ with $\frac12 \le \Re(s) \le 1$ and any $\ep>0$, we have
\begin{equation*}
M_\fr(f,s) \ll_\ep \rN\fr^{1+2\delta-\Re(s)+\ep} y^{1-\Re(s)+\ep}.
\end{equation*}
\end{lem}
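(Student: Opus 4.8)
The plan is to estimate $M_r(f,s)$ by bounding its (finite, squarefree, $(d,P)=1$) summand termwise in absolute value and then evaluating the resulting divisor sum, organized by the value of the gcd $(d,r)$. Write $\sigma=\Re(s)$. First I would record the elementary bounds on each factor. From (\ref{defmd}) we have $|\lambda_d|\le 1$. Since $r\in R(f)$, for every prime $p\mid r$ we have $|\lambda_f(p)|>p^{-\delta}$, hence $|\psi_f(p)|=p|\lambda_f(p)|^{-2}<p^{1+2\delta}$ by (\ref{psidef}); as $\psi_{f,r}(d)=\mu(d)^2\psi_f((d,r))$ is multiplicative this gives $|\psi_{f,r}(d)|\le (d,r)^{1+2\delta}$. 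The Ramanujan-Petersson bound $|\lambda_f(p)|\le nn_k$ gives $|\lambda_f(d)|\le (nn_k)^{\omega(d)}\ll_\ep d^\ep$. Every prime dividing $rd$ exceeds $z$ (because $(d,P)=1$ and $(r,\mathrm{Cond}(f)P)=1$), so for such $p$ one has $|\lambda_f(p)p^{-s}|\le nn_k z^{-1/2}\le\frac12$ since $z\ge 4n^2n_k^2$ and $\sigma\ge\frac12$; hence $|1+\lambda_f(p)p^{-s}|\ge\frac12$ and $\bigl|\prod_{p\mid rd}(1+\lambda_f(p)p^{-s})^{-1}\bigr|\le 2^{\omega(rd)}\ll_\ep (rd)^\ep$. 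Finally, for $p\mid r/(r,d)$ we have $|\psi_f(p)\lambda_f(p)p^{-s}|=p^{1-\sigma}|\lambda_f(p)|^{-1}<p^{1-\sigma+\delta}$, which is $\ge 1$ because $\sigma\le 1$; thus $|1+\psi_f(p)\lambda_f(p)p^{-s}|\le 2p^{1-\sigma+\delta}$ and $\bigl|\prod_{p\mid r/(r,d)}(1+\psi_f(p)\lambda_f(p)p^{-s})\bigr|\ll_\ep r^\ep\bigl(r/(r,d)\bigr)^{1-\sigma+\delta}$.

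Collecting these over $d\le y$ yields
\[
|M_r(f,s)|\ll_\ep r^\ep\sum_{\substack{d\le y,\ (d,P)=1\\ d\text{ squarefree}}}(d,r)^{1+2\delta}\Bigl(\tfrac{r}{(d,r)}\Bigr)^{1-\sigma+\delta}d^{-\sigma+\ep}.
\]
Next I would split the sum according to $e=(d,r)$, a squarefree divisor of $r$: writing $d=ed'$ with $d'$ squarefree, $(d',P)=1$ and $d'\le y/e$, and bounding $d^{-\sigma+\ep}=e^{-\sigma+\ep}d'^{-\sigma+\ep}$ together with $\sum_{d'\le y/e}d'^{-\sigma+\ep}\ll_\ep (y/e)^{1-\sigma+\ep}$ (valid since $\sigma\le 1$ forces the exponent $\sigma-\ep<1$), the powers of $e$ combine as $e^{1+2\delta}\cdot e^{-\sigma+\ep}\cdot e^{-(1-\sigma+\ep)}=e^{2\delta}$, leaving
\[
|M_r(f,s)|\ll_\ep r^\ep\,y^{1-\sigma+\ep}\sum_{e\mid r}e^{2\delta}\Bigl(\tfrac{r}{e}\Bigr)^{1-\sigma+\delta}.
\]

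To finish, I would observe that $\log e\mapsto 2\delta\log e+(1-\sigma+\delta)(\log r-\log e)$ is affine, so over divisors of $r$ the quantity $e^{2\delta}(r/e)^{1-\sigma+\delta}$ is maximized at $e=1$ or $e=r$, giving $\max(r^{1-\sigma+\delta},r^{2\delta})\le r^{1+2\delta-\sigma}$ (the two inequalities being $\delta\ge 0$ and $\sigma\le 1$ respectively); together with the fact that the number of divisors of $r$ is $\ll_\ep r^\ep$, this bounds the remaining sum by $r^{1+2\delta-\sigma+\ep}$. Absorbing the accumulated factors $r^\ep$ and relabeling $\ep$ then gives $M_r(f,s)\ll_\ep r^{1+2\delta-\sigma+\ep}y^{1-\sigma+\ep}$, as claimed. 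There is no serious obstacle here; the only points needing care are the bookkeeping that pins down where the dominant power $r^{1+2\delta}$ arises — it is the extreme case $r\mid d$ of the gcd splitting, balanced against $(r/(r,d))^{1-\sigma+\delta}$ being largest when $(r,d)$ is smallest — and the use of the hypothesis $z\ge 4n^2n_k^2$ to keep the Euler factors $(1+\lambda_f(p)p^{-s})^{-1}$ under control.
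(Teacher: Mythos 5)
Your proof is correct and follows essentially the same route as the paper: bound each factor of the summand in absolute value (using $|\lambda_f(p)|>p^{-\delta}$ for $p\mid r$, Ramanujan--Petersson, and $z\ge 4n^2n_k^2$ to control the factors $(1+\lambda_f(p)p^{-s})^{-1}$), then evaluate the $d$-sum by splitting according to the gcd $(d,r)$ and using $\tau(r)\ll_\ep r^\ep$. The only difference is bookkeeping — the paper pulls the powers of $r$ out immediately (bounding the product over $p\mid r/(r,d)$ by $\tau(r)r^{1+\delta-\Re(s)}$), whereas you carry the $(r,d)$-dependence through and optimize over divisors at the end — and both yield the stated bound.
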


\begin{proof}[Proof of Lemma \ref{mollifierbd}]
Here we denote the tau function $\tau(\fu)$ to be the number of prime ideal divisors of $\fu$; similar to the usual tau function, it has the property $\tau(\fu) \ll \rN\fu^{\ep}$ for any $\ep>0$.

For all squarefree ideals $\fd$ and such that $(\fd,\fP)=1$, for any $\fr \in R(f)$, and for any $\ep>0$, we have the bound
\begin{equation}\label{psilam}
|\psi_{f,\fr}(\fd)\lambda_f(\fd)| \le (\rN(\fr,\fd))^{1+\delta}\lambda_{f}(\frac{\fd}{(\fr,\fd)}) \ll_\ep (\rN(\fr,\fd))^{1+\delta}\rN\fd^{\frac{\ep}{2}} \le \rN\fr^\delta(\rN(\fr,\fd))\rN\fd^{\frac{\ep}{2}}.
\end{equation}
Since $\Re(s) \le 1$, we have
\begin{equation*}
\left| \prod_{\fp|\frac{\fr}{(\fr,\fd)}}(1+\psi_f(\fp)\lambda_f(\fp)\rN\fp^{-s}) \right| \le \tau(\fr)\rN\fr^{1+\delta-\Re(s)}.
\end{equation*}
Choose $z$ to be any constant such that 
\begin{equation}\label{zsecond}
z \ge 4n^2.
\end{equation}
Combining the previous condition (\ref{zfirst}) on $z$, we choose 
\begin{equation}\label{zvalue}
z=4n^4n_k^2.
\end{equation}
Since $|\lambda_f(\fp)| \le n$ for any prime ideal $\fp \in R(f)$ (so that in particular $\rN\fp>z$) and $\Re(s) \ge \frac12$, we have 
\begin{equation*}
|1+\lambda_f(\fp)\rN\fp^{-s}| \ge 1-n\rN\fp^{-\Re(s)} \ge 1-nz^{-\frac12} \ge \frac12
\end{equation*}
and hence for any $\ep>0$,
\begin{equation*}
\left| \prod_{\fp|\fr\fd}(1+\lambda_f(\fp)\rN\fp^{-s})^{-1} \right| \le \tau(\fr\fd) \ll_\ep (\rN\fr\rN\fd)^\frac{\ep}{2}.
\end{equation*}
Then we compute
\begin{equation}\label{mrfscomp}
M_\fr(f,s)  \ll_\ep   \sideset{}{_{}^{\flat}}\sum_{\substack{(\fd,\fP)=1 \\ 1 \le \rN\fd \le y}} \rN\fr^{1+2\delta-\Re(s)+\frac{\ep}{2}}\rN\fd^{\ep-\Re(s)}(\rN(\fr,\fd)) 
 =\rN\fr^{1+2\delta-\Re(s)+\frac{\ep}{2}}\sideset{}{_{}^{\flat}}\sum_\fu \rN\fu \sideset{}{_{}^{\flat}}\sum_{\fv} (\rN\fv\rN\fu)^{\ep-\Re(s)}
 \end{equation}
where $\fu=(\fr,\fd)$, $\fv=\frac{\fd}{(\fr,\fd)}$. Here the sum over $\fu$ takes all squarefree $\fu$ with $1 \le \rN\fu \le y, \fu|\fr, (\fu,\fP)=1$; the sum over $\fv$ takes all squarefree $\fv$ with $1 \le \rN\fv \le y, (\fv,\fu\fP)=1$. Since $\rN\fv=\frac{\rN\fd}{\rN\fu} \le \frac{y}{\rN\fu}$, the double sum in (\ref{mrfscomp}) is equal to
 \begin{equation*}
\sum_{\fu} \rN\fu^{1+\ep-\Re(s)} \sum_{\fv} \rN\fv^{\ep-\Re(s)} \ll  \sum_\fu \rN\fu^{1+\ep-\Re(s)} (\frac{y}{\rN\fu})^{1+\ep-\Re(s)} = \sum_{\fu} y^{1+\ep-\Re(s)} \ll y^{1+\ep-\Re(s)} \tau(\fr).
 \end{equation*}
Therefore, 
\begin{equation*}
M_\fr(f,s) \ll_\ep \rN\fr^{1+2\delta-\Re(s)+\frac{\ep}{2}} y^{1+\ep-\Re(s)} \tau(\fr) \ll_\ep  \rN\fr^{1+2\delta-\Re(s)+\ep} y^{1+\ep-\Re(s)}.
\end{equation*}
\end{proof}

By Lemma \ref{mollifierbd} and the fact that $\Gamma(s)$ has exponential decay for large $|\Im(s)|$, the integral in (\ref{3tosigma}) on $\Re(s)=\sigma$ is
\begin{equation}\label{rhso1}
\ll_\ep X^{1/2-\alpha+\ep}R^{1/2+2\delta+\ep}y^{1/2+\ep}q^{A/4+\ep}T^{nn_k/4+\ep}=o(1)
\end{equation}
by (\ref{4.4}). Therefore, the right-hand side of (\ref{pra32appl}) is $o(1)$.

Now we consider the terms in the left-hand side of (\ref{pra32appl}).
Using $|\Delta(\fn)| \le y$ for all $\fn$ and (\ref{psilam}), the tail contribution from the series summing over $\rN\fn>x$ is
\begin{equation}\label{ngreaterx}
\sideset{}{_{}^{\flat}}\sum_{\substack{(\fn,\fP)=1 \\ \rN\fn > x}} \Delta(\fn)\psi_{f,\fr}(\fn)e^{-\rN\fn/X}\lambda_f(\fn)\rN\fn^{-\rho}  \ll yR^{1+\delta}e^{-(\log(qT))^2}\sum_{\rN\fn > x} e^{-\frac{\rN\fn-x}{X}} \rN\fn^{-\frac12+\ep}=o(1).
\end{equation}
Here we used the fact that any $n$ has at most $\ll \frac{\log n}{\log\log n}$ prime divisors, thus $n$ corresponds to at most $\ll n_k^{\frac{\log n}{\log \log n}} \ll n^\ep$ ideals $\fn$ such that $\rN\fn=n$.
Thus, the value of the Dirichlet polynomial $z_\fr(f,s)$ at $s=\rho$ is $\gg_\ep 1$. The proof of Proposition \ref{18} is finished.

\end{subsection}

\begin{subsection}{Proof of Theorem \ref{main}}

Recall that after the reduction to $\eta$-well-spaced zeros with $\eta$ as in (\ref{realeta}), our goal is to prove (\ref{main2}).

By Lemma \ref{181}, Theorem \ref{14}, and Lemma \ref{201} (the statements of which are now made precise via Remarks \ref{rema} and \ref{remb}), we know that if $\log R \gg \log qT$, $R>q^{C}$ where $C>nA$ (see Lemma \ref{11}), and the assumptions (\ref{4.4}), (\ref{3.10}) (with $N=x$), (\ref{2.38}) are satisfied, then we have
\begin{align*}
\sum_{f \in S(q)} |Z(f)| & \ll  \frac{1}{\log R} \sum_{f \in S(q)} \frac{1}{s(f)} \sum_{\rho \in Z(f)} \sum_{\substack{\fr \in R(f) \\ \rN\fr \le R}} \frac{1}{|\psi_f(\fr)|} \left| z_\fr(f,\rho) \right|^2 && \text{(by Lemma \ref{181})} \\
& \ll  \frac{1}{\log R} \log(qTx)\left( 1+\log \frac{\log x}{\log qTR} \right)\sideset{}{_{}^{\flat}}\sum_{\rN\fn \le x} |a_\fn|^2\rN\fn^{1-2\alpha}  && \text{(by Theorem \ref{14})} \\
%& \le  \frac{1}{\log R} \log(qTx)\left( 1+\log \frac{\log x}{\log qTR} \right)\sum_{w \le n \le x} |a_n|^2n^{1-2\alpha}n_k^{\frac{\log x}{\log\log x}} \\
%& \ll   x^{2(1-\alpha)+\frac{\log n_k}{\log\log x}}  && \text{(by Lemma \ref{201})} \\
& \ll x^{2(1-\alpha)}. && \text{(by Lemma \ref{201})} 
\end{align*}
We choose the parameters as follows: for sufficiently small numbers $\ep_i>0$, $i=1,2,3,4$,
\begin{equation}\label{par1}
\delta=\ep_1, \ \ R=q^{nA}(qT)^{\ep_2}, \ \ w=
M=2\left( q^{d+nA/2}TR^{1+3\delta}(\log R) \right)^{\frac{1}{\frac{1}{2}-\ep_0}}
\end{equation}
\begin{equation}\label{par2}
y=w(qT)^{\ep_3}, \ \ x=\left[ yq^{A/2}T^{nn_k/2}R^{1+4\delta} \right]^{1/(2\alpha-1)+\ep_4}.
\end{equation}
These choices fulfill all the assumptions for $qT$ sufficiently large (depending only on $\ep_i$'s).
Setting $x$ as above, we have (\ref{main2})
for $qT \ge C_{\ep}$, where $C_{\ep}$ is a sufficiently large number depending only on $\ep_i$'s. For $qT<C_{\ep}$, we use the trivial bound for the number of zeros; see, e.g., \cite[Theorem 5.8]{IK04}, and obtain $\sum_{f \in S(q)} N(f;\alpha,T) \ll_{\ep} 1$. Now we have finished the proof of (\ref{main2}), and hence of Theorem \ref{main}.
\end{subsection}

\end{section}

\begin{section}{Application to an effective Chebotarev density theorem}\label{sectcdt}

Our log-free zero density estimate for automorphic $L$-functions leads to a new effective Chebotarev density theorem. Our approach is similar to that in \cite{PTW17}, and applies a refinement in \cite{TZ18}. We will work somewhat more generally before specializing to the context of Theorem \ref{cdt}.

In 1975, Lagarias and Odlyzko \cite{LO75} gave the first effective version of the Chebotarev density theorem: for any finite Galois extension of number fields $L/k$, they provided explicit error terms either assuming the Generalized Riemann Hypothesis or without the Generalized Riemann Hypothesis. Instead of considering the extension $L/k$, they considered the problem for the extension $L/E$ where $E$ is a number field lying between $L$ and $k$ and $L/E$ is cyclic. In this way, they were able to write the quotient of Dedekind zeta-functions $(\zeta_L)/(\zeta_E)$ into a product of Hecke $L$-functions, which are known to be analytic.

Researchers have been trying to improve the error term of the effective Chebotarev density theorem since Lagarias and Odlyzko's proof was published. Serre \cite{Ser81} improved the error term in the theorem assuming the Generalized Riemann Hypothesis by a log factor. For almost every field in many families of number fields, Pierce, Turnage-Butterbaugh, and Wood \cite{PTW17} eliminated the exceptional zero term and gave a better threshold so that the effective Chebotarev density theorem counts smaller primes unconditionally. 
Since then, other improvements have been shown in work including \cite{TZ18}, \cite{TZ19b}, \cite{An18}. For our purposes, we will adapt \cite{PTW17}, \cite{TZ18}, developing some new ideas required in our setting over an arbitrary base field $k$.

We recall an effective Chebotarev density theorem conditional on a zero-free region. 
\begin{thm}\label{cdtzf}
Let $k$ be a fixed number field. Fix $0<\delta \le \frac{1}{2}$, and an integer $n \ge 2$. Let $G$ be a fixed transitive subgroup of $S_n$. Assume the strong Artin conjecture for $G$ (see Conjecture \ref{saconj}). Then for any Galois extension of number fields $L/k$ with $\mathrm{Gal}(L/k) \simeq G$ and such that the Artin $L$-function $\zeta_L(s)/\zeta_k(s)$ is zero-free in the region
\begin{equation}\label{lkzf}
[1-\delta,1] \times [-T,T]
\end{equation}
where $T \ge (\log D_L)^{24}$, we have that for any conjugacy class $\cC \subseteq G$,
\begin{equation*}
\left| \pi_\cC(x,L/k)-\frac{|\cC|}{|G|}\pi(x) \right| \le \frac{|\cC|}{|G|}\frac{x}{\log x}\left( x^{-\delta/8}+T^{-\frac{1}{24}}e^{-\frac{1}{24}\sqrt{c_4(\log x)/n_L}}+T^{-\frac{1}{24}}e^{-\frac{1}{24}\frac{c_4 \log x}{\log D_L}}\right)
\end{equation*}
for an absolute constant $c_4>0$ and all
$x \ge (\log D_L)^{16/\delta}$.
\end{thm}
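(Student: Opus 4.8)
The plan is to run the Lagarias--Odlyzko contour argument for each Artin $L$-function dividing $\zeta_L(s)/\zeta_k(s)$, inserting the assumed rectangular zero-free region on top of the classical de la Vall\'ee Poussin region, as in \cite{LO75}, \cite{PTW17}, and \cite{TZ18}. First, using the strong Artin conjecture for $G=\Gal(L/k)$, I would write
\[
\frac{\zeta_L(s)}{\zeta_k(s)}=\prod_{\rho\neq\mathbf 1}L(s,\rho)^{\chi_\rho(1)},
\]
the product over nontrivial irreducible representations $\rho$ of $G$, where each $L(s,\rho)$ is the $L$-function of a cuspidal automorphic representation of $\GL_{\chi_\rho(1)}(\mathbb{A}_k)$ with $\chi_\rho(1)\le n$; in particular every $L(s,\rho)$ is entire, and the conductor--discriminant formula gives a conductor bound $\fq_\rho\le D_L$. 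Since the zeros of $\zeta_L/\zeta_k$ are exactly the union of the zeros of the $L(s,\rho)$, the hypothesis says each $L(s,\rho)$ is zero-free on $[1-\delta,1]\times[-T,T]$; combining this with the standard zero-free region for automorphic $L$-functions over $k$ (\cite[Theorem 5.42]{IK04}), and noting that a hypothetical Landau--Siegel zero of $L(s,\rho)$ would lie inside the assumed rectangle and so does not occur, each $L(s,\rho)$ is zero-free on
\[
\mathcal R:=\bigl([1-\delta,1]\times[-T,T]\bigr)\cup\Bigl\{\sigma+it:\ \sigma\ge 1-\tfrac{c_4}{\log D_L+n_L\log(|t|+2)}\Bigr\}
\]
for an absolute $c_4>0$, where moreover $-\tfrac{L'}{L}(s,\rho)\ll\log\bigl(D_L(|\Im s|+2)^{n_L}\bigr)$ on $\mathcal R$.

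\emph{Orthogonality and reduction.} Let $\psi_\cC(x)$ denote the weighted count $\sum\log\mathrm{Nm}\,\mathfrak p$ over prime powers $\mathfrak p^m$ with $\mathrm{Nm}\,\mathfrak p^m\le x$ whose Frobenius power lies in $\cC$. Character orthogonality on $G$ yields
\[
\psi_\cC(x)=\frac{|\cC|}{|G|}\Bigl(\psi_k(x)+\sum_{\rho\neq\mathbf 1}\overline{\chi_\rho(\cC)}\,\psi(x,\rho)\Bigr)+O\bigl(\sqrt x\,\log x+\log D_L\bigr),
\]
where $\psi(x,\rho)=\sum_{\mathrm{Nm}\,\mathfrak n\le x}\Lambda_\rho(\mathfrak n)$, the error absorbs ramified primes and higher prime powers, and $\psi_k(x)=x+O_k\!\bigl(xe^{-c\sqrt{\log x}}\bigr)$ supplies the main term $\tfrac{|\cC|}{|G|}x$. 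It then suffices to bound $\psi(x,\rho)$ for each nontrivial $\rho$, summing over the $O_n(1)$ such $\rho$ (each with $|\chi_\rho(\cC)|\le n$), and finally to pass from $\psi_\cC$ to the prime-ideal count $\pi_\cC$ by partial summation, which produces the factor $\tfrac1{\log x}$ and, via $\int_2^x dt/\log t$, the stated main term $\tfrac{|\cC|}{|G|}\pi(x)$ up to an error of the same shape.

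\emph{The explicit formula (the crux).} For a smooth minorant/majorant $\phi$ for $\mathbf 1_{[0,x]}$ of scale $\eta x$, I would write $\sum_{\mathfrak n}\Lambda_\rho(\mathfrak n)\phi(\mathrm{Nm}\,\mathfrak n)=\tfrac1{2\pi i}\int_{(1+\ep)}\bigl(-\tfrac{L'}{L}(s,\rho)\bigr)\widetilde\phi(s)\,ds$ and shift the contour to the left boundary of $\mathcal R$: to $\Re(s)=1-\delta$ for $|\Im s|\le T$, to the curve $\sigma=1-\tfrac{c_4}{\log D_L+n_L\log(|t|+2)}$ for $|\Im s|>T$, with short horizontal links at $\Im s=\pm T$. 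Since $L(s,\rho)$ is entire and $\mathcal R$ is zero-free, no residue is picked up; the bound for $-\tfrac{L'}{L}$ on $\mathcal R$ together with the rapid decay of $\widetilde\phi$ then shows that the vertical segment inside the box contributes $\ll x^{1-\delta}\log(D_L x)$, the curved tail (cut at an auxiliary height $U$) contributes $\ll x\exp\!\bigl(-\tfrac{c_4\log x}{\log D_L+n_L\log U}\bigr)(\log D_L x)^{O(1)}$ plus a truncation error $\ll x(\log x)(\log D_L x)/U$, and the horizontal links are negligible. Optimizing $U$ in the usual way---so that $\log U\asymp\sqrt{(\log x)/n_L}$ in one range of $x$ and $\log U$ is of order $\log D_L$ in the complementary range---produces the two exponential factors $e^{-\frac1{24}\sqrt{c_4(\log x)/n_L}}$ and $e^{-\frac1{24}(c_4\log x)/\log D_L}$, while the excess height of the rectangular region (beyond what the box segment alone requires) combined with the decay of $\widetilde\phi$ yields the extra saving $T^{-1/24}$ on these tails; undoing the smoothing costs only $O(\eta x)$, which is reabsorbed. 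Using $x\ge(\log D_L)^{16/\delta}$ and $T\ge(\log D_L)^{24}$ to absorb the polylogarithmic factors into the transition $x^{1-\delta}\mapsto \tfrac{x}{\log x}x^{-\delta/8}$ and collecting all terms gives the asserted bound.

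\emph{The main obstacle.} The technical heart is the contour management above: splicing the wide rectangular zero-free region to the narrow classical one with uniform control of $L'/L$ across the seam, carrying out the truncation-height optimization while keeping every dependence on $D_L$, $n_L$, $\fq_\rho$, $T$, and $\delta$ completely explicit, and choosing the smoothing scale $\eta$ so that no spurious $x/T$ truncation term survives. Everything else---the strong-Artin factorization, the conductor--discriminant estimate, character orthogonality, the ramified-prime and prime-power error terms, and the partial summation back to $\pi_\cC$---is routine and follows \cite{LO75}, \cite{PTW17}, \cite{TZ18}.
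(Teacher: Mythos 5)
There is no proof in the paper to compare against line by line: the paper explicitly omits the proof of Theorem \ref{cdtzf}, stating that it is the analogue of \cite[Theorem 8.3]{TZ18} and that the passage from base field $\Q$ to an arbitrary $k$ follows from Remark (2) after Proposition 8.1 of \cite{TZ18}. Your Lagarias--Odlyzko-style plan (strong Artin factorization of $\zeta_L/\zeta_k$ into entire automorphic $L$-functions over $k$, conductor--discriminant bound, orthogonality, smoothed explicit formula, contour shift into the hypothesized rectangle spliced with the classical zero-free region) is the same general framework that underlies the cited result, and the steps you label routine are indeed the routine ones.

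However, as a proof of the inequality with the stated constants, your sketch has genuine gaps beyond deferred bookkeeping. First, you shift the contour exactly to $\Re(s)=1-\delta$ and assert $-\frac{L'}{L}(s,\rho)\ll\log\bigl(D_L(|\Im s|+2)^{n_L}\bigr)$ there. The hypothesis only forbids zeros inside the closed box $[1-\delta,1]\times[-T,T]$; zeros of $L(s,\rho)$ may lie arbitrarily close to the line $\Re(s)=1-\delta$ on its left, so no log-size bound for $L'/L$ holds on that line. The standard repair is to stop partway into the region (e.g.\ at $\Re(s)=1-\delta/2$, keeping a quantitative distance from all possible zeros) or to run the explicit formula as a sum over zeros with zero-counting input; the weaker exponent $x^{-\delta/8}$ and the restriction $x\ge(\log D_L)^{16/\delta}$ in the statement reflect exactly this kind of loss, which your sketch does not account for. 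Second, the factor $T^{-1/24}$ multiplying both exponential terms is asserted (``excess height of the rectangular region \dots yields the extra saving'') but never derived; note that with a smoothing scale $\eta x$ small enough that the $O(\eta x)$ unsmoothing cost sits below $\frac{x}{\log x}x^{-\delta/8}$, the decay of $\widetilde\phi$ need not even begin by height $T$, so the saving cannot come for free from the test function --- it is precisely the refinement that \cite{TZ18} extracts through their truncation and zero-bookkeeping, and it is the part your own ``main obstacle'' paragraph leaves open. Two smaller slips: irreducible representations of a transitive $G\le S_n$ need not have dimension $\le n$ (harmless, since only $n_L=|G|n_k$ enters the final bound, but the degree bookkeeping should be done in those terms), and routing the main term through the prime ideal theorem for $k$ with error $x e^{-c\sqrt{\log x}}$ introduces a $k$-dependent term that the clean inequality (a plain $\le$ with an absolute constant $c_4$) cannot simply absorb; it is better to let the trivial character produce $\frac{|\cC|}{|G|}\pi(x)$ directly, up to ramified primes and prime powers.
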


Theorem \ref{cdtzf} is analogous to \cite[Theorem 8.3]{TZ18} and we will omit the proof here. Although in \emph{loc. cit.} the theorem is stated for field extensions over $\Q$ for this particular proof, we can easily generalize it to an arbitrary base field; see Remark (2) after Proposition 8.1 in \cite{TZ18}.

For a number field $k$ and a transitive subgroup $G \subseteq S_n$ with $n \ge 2$ an integer, we denote
\[ Z_n(k,G)=\{ K/k: [K:k]=n, \mathrm{Gal}(\widetilde{K}/k) \simeq G \}. \]
Following the method of \cite{PTW17}, we need to impose a ramification restriction. For a set of conjugacy classes $\cF$ in $G$, we let
\begin{eqnarray}\label{fml2}
\nonumber Z_n^\cF(k,G)&=&\{K \in Z_n(k,G): \text{for the extension $\widetilde{K}/k$, the inertia group of every tamely} \\
& & \text{ramified prime ideal in $k$ is generated by an element in $\cF$}. \}
\end{eqnarray}
Let 
\begin{equation}\label{fml3}
Z_n^\cF(k,G;X)=\{K \in Z_n^\cF(k,G): \mathrm{Nm}_{k/\Q}\mathrm{Disc}(K/k) \le X \}.
\end{equation}
The family $Z_n^\ast(k;X)$ defined in Section \ref{intro} is exactly $Z_n^\cF(k,C_n;X)$.

With Theorem \ref{cdtzf} in hand, we will prove in Section \ref{pfgencdt} the following theorem that specializes to Theorem \ref{cdt} in the case $Z_n^\cF(k,C_n)$. It is a ``meta theorem'' with significant hypotheses that aims to demonstrate in general how our zero density estimate (Theorem \ref{main}) and Theorem \ref{cdtzf} can be used to obtain an effective Chebotarev density theorem for suitable families of number fields.
\begin{thm}\label{gencdt}
Let $Z_n^\cF(k,G)$ be a family of fields as defined in (\ref{fml2}). Assume the strong Artin conjecture for $G$.
Assume that there exists $\beta>0$ such that for any $X \ge 1$, $|Z_n^\cF(k,G;X)| \gg X^\beta$. Assume that there exists $0 \le \tau<\beta$ such that for any field $F$ in a certain set $\mathfrak{F}$ of number fields (defined in (\ref{mfrakf})), and any $\ep_1>0$, there are $\ll X^{\tau+\ep_1}$ fields $K$ in $Z_n^\cF(k,G;X)$ such that $\widetilde{K}$ is an extension of $F$.
Then for any $0<\ep<1$ sufficiently small, there exists $\kappa=\kappa(n,n_k,|G|,\ep)>0$ such that aside from at most $\ll_{\ep} X^{\tau+\ep}$ possible exceptions, each field $K \in Z_n^\cF(k,G;X)$ has the property that for every conjugacy class $\cC \subseteq G$,
\begin{equation*}
\left| \pi_\cC(x,\widetilde{K}/k)-\frac{|\cC|}{|G|}\pi(x) \right| \ll
\begin{cases}
\frac{|\cC|}{|G|}x^{1-\kappa} & \text{if } (\log D_{\widetilde{K}})^{2/\kappa} \le x<D_{\widetilde{K}}^{1/(24\kappa)}, \\
\frac{|\cC|}{|G|} \frac{x}{\exp(c_3(\log x)^{1/2}|G|^{-1/2}n_k^{-1/2})} & \text{if } x \ge D_{\widetilde{K}}^{1/(24\kappa)},
\end{cases}
\end{equation*}
where $c_3>0$ is an absolute constant. Since $\beta>\tau$, the set of possible exceptional fields in the family has density zero.
\end{thm}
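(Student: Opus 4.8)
The plan is to establish, for all but $\ll_{\ep}X^{\tau+\ep}$ of the fields $K\in Z_n^\cF(k,G;X)$, a zero-free region for the Artin $L$-function $\zeta_{\widetilde K}(s)/\zeta_k(s)$ of the shape $[1-\delta,1]\times[-T,T]$ with $T=D_{\widetilde K}$, and then to feed this into Theorem \ref{cdtzf}. Throughout, $\delta>0$ (and hence $\kappa$) will be a small constant depending only on $n,n_k,|G|,\ep$ and on the fixed field $k$, chosen at the very end; the finitely many fields with $D_{\widetilde K}$ bounded may be discarded, as they are handled by the unconditional effective Chebotarev theorem.

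\textbf{Steps 1--2: reducing to a log-free density estimate.} For each $K$ in the family, Artin formalism gives $\zeta_{\widetilde K}(s)/\zeta_k(s)=\prod_{\rho\ne 1}L(\rho,s)^{\dim\rho}$ over the nontrivial irreducible representations $\rho$ of $G=\mathrm{Gal}(\widetilde K/k)$, and the strong Artin conjecture identifies each $L(\rho,s)$ with $L(\pi_\rho,s)$ for a cuspidal automorphic representation $\pi_\rho$ of $\mathrm{GL}_{\dim\rho}(\mathbb A_k)$; since $\rho$ has finite image its Satake parameters are roots of unity, so $\pi_\rho$ automatically satisfies the Ramanujan--Petersson conjecture required by Theorem \ref{main}. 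Here $\pi_\rho$ is determined by the fixed field $F_\rho:=\widetilde K^{\ker\rho}\in\mathfrak F$ (the set of (\ref{mfrakf})), and the conductor--discriminant formula together with the standard tower bound $D_{\widetilde K}\ll_{n,n_k}D_K^{O_n(1)}\ll X^{O_n(1)}$ gives $\mathrm{Cond}(\pi_\rho)\le X^{A_0}$ for a constant $A_0=A_0(n,n_k,|G|,k)$, uniformly in $K,\rho$; thus any zero of $\zeta_{\widetilde K}/\zeta_k$ in the box is a zero of some $L(\pi_\rho,\cdot)$ there. Now for each of the $O_{|G|}(1)$ possible degrees $m=\dim\rho$ and each dyadic $Q$ with $1\le Q\le X^{A_0}$, let $S_m(Q)$ be the set of pairwise distinct $\pi_\rho$ of $\mathrm{GL}_m$ with $\mathrm{Cond}(\pi_\rho)\in[Q,2Q)$ arising from some $K$ in the family; since $\pi_\rho$ is pinned down by the field $F_\rho$, of degree $\le|G|n_k$ over $\Q$ and absolute discriminant $\ll Q^{O_{|G|}(1)}$, a quantitative Hermite--Minkowski bound gives $|S_m(Q)|\ll Q^{d_0}$ with $d_0=d_0(|G|,k)$, so $(S_m(Q))_Q$ is admissible for Theorem \ref{main} with data $A,d$ of size $O_{|G|,k}(1)$. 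Applying Theorem \ref{main} with $\alpha=1-\delta$ (so $\delta\le1/4$) and summing over the $O(\log X)$ dyadic $Q$ and the $O(1)$ degrees $m$, the number $|\mathcal E|$ of distinct automorphic $\pi$ arising from the family with a zero in $[1-\delta,1]\times[-T,T]$ satisfies
\[
|\mathcal E|\ \ll_{\ep}\ (\log X)\bigl(X^{A_0(c_1+\ep)}T^{c_2+\ep}\bigr)^{\delta}\ \ll_{\ep}\ X^{c_5\delta+\ep/4}
\]
for a constant $c_5=c_5(n,n_k,|G|,k)$, using $T=D_{\widetilde K}\ll X^{O(1)}$.

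\textbf{Step 3: from bad representations to bad fields, and conclusion.} Call $K$ exceptional if some $\pi_\rho$ lies in $\mathcal E$. If $\pi_\rho=\pi$ for a given $K$, then $\widetilde K$ contains the field $F\in\mathfrak F$ attached to $\pi$, so by hypothesis the number of $K$ in the family with $\widetilde K\supseteq F$ is $\ll X^{\tau+\ep_1}$; hence $\#\{\text{exceptional }K\}\ll_{\ep}|\mathcal E|\,X^{\tau+\ep_1}\ll_{\ep}X^{\tau+c_5\delta+\ep/4+\ep_1}$. Choosing $\ep_1<\ep/2$ and then $\delta=\min\{1/4,\ \ep/(5c_5)\}$ makes this $\ll_{\ep}X^{\tau+\ep}$; since $\tau<\beta$ and $|Z_n^\cF(k,G;X)|\gg X^\beta$, the exceptional set has density zero. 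For each non-exceptional $K$, $\zeta_{\widetilde K}/\zeta_k$ is zero-free in $[1-\delta,1]\times[-T,T]$ with $T=D_{\widetilde K}\ge(\log D_{\widetilde K})^{24}$, so Theorem \ref{cdtzf} applies; setting $\kappa=\delta/16$ (so $2/\kappa\ge16/\delta$) and taking $c_3$ a suitably small absolute constant, a direct computation---using $x^{-\delta/8}\le x^{-\kappa}$, the genuine power saving $D_{\widetilde K}^{-1/24}$ in the remaining two error terms, and $n_{\widetilde K}=[\widetilde K:\Q]\le|G|n_k$---converts the estimate of Theorem \ref{cdtzf} into the claimed two-regime bound, with $x^{1-\kappa}$ dominating on $(\log D_{\widetilde K})^{2/\kappa}\le x<D_{\widetilde K}^{1/(24\kappa)}$ and $\tfrac{|\cC|}{|G|}x\exp(-c_3(\log x)^{1/2}|G|^{-1/2}n_k^{-1/2})$ on $x\ge D_{\widetilde K}^{1/(24\kappa)}$. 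This proves Theorem \ref{gencdt}.

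\textbf{Main obstacle.} The one genuinely nontrivial input has been isolated as a hypothesis: the multiplicity estimate $\#\{K\in Z_n^\cF(k,G;X):\widetilde K\supseteq F\}\ll X^{\tau+\ep_1}$, uniform over $F\in\mathfrak F$, with $\tau<\beta$. Over $\Q$ this is supplied by the discriminant-counting lemma of \cite{PTW17}, but that argument fails over a general base $k$, which is precisely why it must be assumed here (and then established by new means for $Z_n^\cF(k,C_n)$ in Section \ref{genub}). Everything else is bookkeeping: checking that each family $S_m(Q)$ is genuinely admissible with conductor-uniform data across all dyadic scales (including $Q=1$, the everywhere-unramified constituents), tracking the dependence of $c_1$ on $A,d$ so that $\delta,\kappa$ may be taken to depend only on $n,n_k,|G|,\ep$ (and $k$), and matching the three error terms of Theorem \ref{cdtzf} to the two regimes of the statement.
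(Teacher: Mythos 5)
Your overall architecture is the same as the paper's: apply Theorem \ref{main} to the automorphic families attached to the nontrivial irreducible representations of $G$ to show that few of these $L$-functions can vanish in a low-lying box, convert ``few bad representations'' into ``few bad fields'' via the multiplicity hypothesis, and then feed the resulting zero-free region into Theorem \ref{cdtzf} and do the final calculus with $\kappa\asymp\delta\asymp\ep$. The differences in Steps 1--2 (dyadic conductor families $S_m(Q)$ with a field-counting bound for $|S_m(Q)|$, versus the paper's single family with $q=X$, $A=|G|/2$, $d=n|G|+1$) and in Step 3's endgame ($\kappa=\delta/16$ versus $\delta/8$) are cosmetic. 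However, Step 3 contains a genuine gap. You write that if $\pi_\rho=\pi$ for a given $K$, then $\widetilde K$ contains ``the field $F\in\mathfrak F$ attached to $\pi$''; this presumes that the automorphic representation $\pi$ (equivalently, the Artin $L$-function over $k$) determines a single field $F$ such that every $K$ in the family producing $\pi$ satisfies $\widetilde K\supseteq F$. Over $\Q$ this is exactly Proposition 6.3 of \cite{PTW17}, but over a general base field $k$ it can fail (see the remark after \cite[Proposition 3]{KN16}): two fields $K_1,K_2$ with $L(s,\rho_j,\widetilde{K_1}/k)=L(s,\rho_j,\widetilde{K_2}/k)$ need not satisfy $\widetilde{K_1}^{\ker\rho_j}=\widetilde{K_2}^{\ker\rho_j}$. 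So the map $\pi\mapsto F$ is not well defined, and the bound ``number of exceptional $K$ $\ll |\mathcal E|\,X^{\tau+\ep_1}$'' does not follow from the stated hypothesis, which controls the number of $K$ with $\widetilde K\supseteq F$ only for each \emph{fixed} $F$; a priori the fields $\widetilde K^{\ker\rho_j}$ attached to a single $\pi$ could range over a large subset of $\mathfrak F$.

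This is precisely the point flagged in the introduction as the new difficulty over $k$, and it is what Section \ref{genub} of the paper is for: one passes to the Galois closure $\widetilde{\widetilde K}$ over $\Q$, uses $L(s,\rho_j,\widetilde K/k)=L(s,\psi_j,\widetilde{\widetilde K}/\Q)$ for the induced representation $\psi_j$, applies the $\Q$-statement of \cite{PTW17} to produce one field $M_{\pi}$ depending only on $\pi$, shows via Lemma \ref{kpsif} that every candidate $F=\widetilde K^{\ker\rho_j}$ is contained in $M_{\pi}$, and then notes that the number of such $F$ is $O(1)$ (they are among the boundedly many subfields of $M_{\pi}$). Only after this can the intermediate-field hypothesis be summed over the admissible $F$ to yield the multiplicity bound (\ref{piljpi}) that your Step 3 needs. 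Your ``main obstacle'' paragraph misplaces the difficulty: what fails over $k$ is not the intermediate-field counting hypothesis itself (which the theorem lets you assume), but the deduction of (\ref{piljpi}) from it, i.e.\ the passage from ``same $L$-function'' to ``same, or boundedly many, intermediate fields.'' Supplying this argument (or an equivalent substitute) is required to close the proof.
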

The set of fields $\mathfrak{F}$ will be defined as a set of intermediate fields between $k$ and $\widetilde{K}$ with $K \in Z_n^\cF(k,G;X)$. Thus, we will refer to the corresponding assumption  in Theorem \ref{gencdt} as the \emph{assumption on intermediate fields}.

Theorem \ref{cdt} will follow from Theorem \ref{gencdt} by choosing $G=C_n$ and $\cF$ comprised of all generators of $G$. In the context of Theorem \ref{cdt}, we can verify all hypotheses of Theorem \ref{gencdt} unconditionally with $\tau=0$ and $\beta=\frac{1}{n-1}$; we will prove this in Section \ref{ublb}. Note that there is no ramification restriction when $n$ is a prime.

The proof of Theorem \ref{gencdt} is shown in Section \ref{pfgencdt}. It is based on the proof of \cite[Theorem 1.1]{PTW17} and \cite[Theorem 2.4]{TZ18} but introduces new ideas in Section \ref{genub} to overcome a difficulty when working over $k$ rather than $\Q$.

This ``meta theorem'' has strong hypotheses, some of which can be removed by adapting recent work of other authors. For example, Thorner and Zaman's innovative approach \cite{TZ19b} does not assume the strong Artin conjecture when working on certain families of extensions over $\Q$, but still requires an assumption on intermediate fields. Moreover, in a very recent preprint, Lemke Oliver, Thorner, and Zaman \cite{LOTZ} replace the ``subfield problem'' implicit as a hypothesis in our ``meta theorem'' by a different number field counting problem, and this has advantages in their work, when they deal with prime degree extensions and degree $n$ $S_n$-extensions of a fixed number field $k$.
But we still see a benefit in proving this ``meta theorem'' in some generality, because our approach introduces new ideas in Section \ref{genub} to overcome a difficulty that was inherent in \cite[Remark 6.5]{PTW17} when trying to adapt it to work over $k$ rather than $\Q$. Our new ideas relate to a clever observation of Kl\"{u}ners and Nicolae \cite{KN16} about whether a field is determined by its Artin $L$-functions, when working over a field other than $\Q$. After proving the ``meta theorem'', we specialize to a setting that is not treated in the other works mentioned above, and for which we can verify all of the hypotheses of our meta theorem unconditionally over any number field $k$.

\begin{subsection}{``Almost all'' fields in the family satisfy a zero-free region}

We begin the proof of the very general ``meta theorem'' (Theorem \ref{gencdt}).

We need the strong Artin conjecture for the effective Chebotarev density result stated in Theorem \ref{gencdt}. We note that Thorner and Zaman's innovative approach \cite{TZ19b} for families of fields over $\Q$ avoids this hypothesis by working directly with Dedekind zeta functions. In our approach, we require it only for the general discussion, and it is known in the case of Theorem \ref{cdt}.
 Our reference here is \cite{Mar03}.
\begin{conj}[strong Artin conjecture]\label{saconj}
Let $L/k$ be a Galois extension of number fields with $\mathrm{Gal}(L/k) \cong G$. Let $\rho$ be a complex representation of $G$ of dimension $m$. Then there exists an automorphic representation $\pi(\rho)$ of $\GL_m(\mathbb{A}_k)$ such that the $L$-function $L(s,\rho)$ and $L(s,\pi)$ agree at all but finitely many places. Moreover, if $\rho$ is irreducible, then $\pi$ is cuspidal.
\end{conj}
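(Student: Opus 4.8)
The plan is modest, because Conjecture \ref{saconj} is a well-known open problem --- essentially a large part of the Langlands functoriality programme --- and is not expected to be provable by current methods in the generality stated. For the general ``meta theorem'' (Theorem \ref{gencdt}) it is therefore retained as a hypothesis, exactly as in \cite{PTW17} and \cite{TZ18}; what actually has to be checked is that in the cases used in this paper it is a theorem. For Theorem \ref{cdt} and Theorem \ref{ltorsion} only the case $G=C_n$ occurs, and more generally the conjecture is unconditional whenever $G$ is abelian.

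For abelian $G$ the proof is class field theory. Every irreducible complex representation $\rho$ of $G$ is one-dimensional, so it is a character $\chi$ of $\Gal(L/k)$; by the Artin reciprocity law, $\chi$ corresponds to a Hecke character $\widetilde\chi$ of $k$, namely a continuous finite-order character of the idele class group $\mathbb{A}_k^\times/k^\times$, and the Artin $L$-function $L(s,\rho)$ agrees with the Hecke $L$-function $L(s,\widetilde\chi)$ at all but finitely many places. Since a Hecke character is precisely an automorphic representation of $\GL_1(\mathbb{A}_k)$, automatically cuspidal because $\GL_1$ has no proper parabolic subgroups, this gives Conjecture \ref{saconj} with $m=1$ and $\pi(\rho)=\widetilde\chi$, covering $C_n$ as desired. (If one also wanted, say, $G=D_n$ or $S_n$, this step would provide only the one-dimensional constituents and one would have to supply the higher-dimensional ones by other means.)

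Beyond the abelian case one would argue group by group. When $G$ is supersolvable (in particular nilpotent) every irreducible representation is monomial, induced from a one-dimensional character of a subgroup; combining the abelian case with Arthur--Clozel cyclic automorphic induction along a chain of normal subgroups of prime index then yields the conjecture for all $M$-groups. Two-dimensional representations are handled by Langlands--Tunnell when the projective image is solvable, and by the proof of Serre's conjecture (Khare--Wintenberger) in the odd case, with modularity-lifting methods extending this somewhat further. The genuine obstacle, and the reason the full conjecture is out of reach, is that a representation with non-solvable image requires a new instance of functoriality --- an automorphy statement for the attached Galois representation, or the relevant symmetric-power or tensor-product functorial lift --- which is not available by the techniques used here. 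Accordingly, for non-abelian $G$ we keep Conjecture \ref{saconj} as a standing assumption; see also \cite{Mar03}.
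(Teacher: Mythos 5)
Your treatment matches the paper's: the statement is an open conjecture that the paper does not prove but retains as a hypothesis in Theorem \ref{gencdt}, noting (with the same references, \cite{Art31}, \cite{AC89}, \cite{Lan80}, \cite{Tun81}) that it is a theorem in the cases actually used, in particular for $G=C_n$ via class field theory / Artin reciprocity. So your proposal is correct and takes essentially the same approach as the paper.
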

By \cite[Proposition 2.1]{Mar03}, if $\pi$ is cuspidal and $L(s,\rho)$ and $L(s,\pi)$ agree at all but finitely many places, then $L(s,\pi)=L(s,\rho)$.

The strong Artin conjecture is true for 1-dimensional representations by \cite{Art31}; nilpotent Galois extensions $L/k$ by \cite{AC89}, $A_4$ and $S_4$ by \cite{Lan80} and \cite{Tun81} respectively, dihedral groups by \cite{Lan80}. In particular, it is known for $G=C_n$, the cases considered in Theorem \ref{cdt}.

Fix a transitive subgroup $G \le S_n$. Let $\rho_1,\dots,\rho_s$ denote once and for all the nontrivial irreducible representations of $G$. For a Galois extension of number fields $L/k$ with $\mathrm{Gal}(L/k) \simeq G$, we have the decomposition of the Dedekind zeta function
\begin{equation}\label{zetarel}
\zeta_L(s)=\zeta_k(s)\prod_{j=1}^s L(s,\rho_j,L/k)^{m_j}
\end{equation}
where $m_j=\dim \rho_j$.

Fix $X \ge 1$. Define $\widetilde{Z}_n^\cF(k,G;X)$ to be the set of the Galois closures of fields $K$ over $k$ as $K$ varies in $Z_n^\cF(k,G;X)$. 
For $\rho_j$ a fixed nontrivial irreducible representation of $G$ as above, and a field $L \in \widetilde{Z}_n^\cF(k,G;X)$, we associate a cuspidal automorphic representation $\pi_{L,j}$ of $\GL(m_j)$ over $k$ via the strong Artin conjecture such that $L(s,\pi_{L,j})=L(s,\rho_j,L/k)$. We define $\cL_{n,j}(k,G;X)$ to be the set of cuspidal automorphic representations $\pi_{L,j}$ associated to $L$ and $\rho_j$, as $L$ varies over $\widetilde{Z}_n^\cF(k,G;X)$. Thus, for a family $\widetilde{Z}_n^\cF(k,G;X)$ and for each nontrivial irreducible representation $\rho_j$ of $G$ ($1 \le j \le s$), we have a set $\cL_{n,j}(k,G;X)$ of cuspidal automorphic representations.

For a family $Z_n^\cF(k,G)$ and any $X \ge 1$, we thus have the family $\widetilde{Z}_n^\cF(k,G;X)$; we will apply our zero density estimate of Theorem \ref{main} to the corresponding families $\cL_{n,j}(k,G;X)$ for each $j$.

We use the abbreviation $\cD(K/k)$ for $\mathrm{Nm}_{k/\Q}(\mathrm{Disc}(K/k))$. We recall the following lemmas.
\begin{lem}\label{69}
Let $K/k$ be a number field extension with $\mathrm{Gal}(\widetilde{K}/k) \cong G$ and let $H:=\mathrm{Gal}(\widetilde{K}/K)$. Let $\cP$ be a prime ideal in $k$ that is tamely ramified in $K$ and $\widetilde{K}$, and has an inertia group generated by $g \in G$. Then the power $\alpha$ such that $\cP^\alpha \| \mathrm{Disc}(K/k)$ is 
\begin{equation*}
[G:H]-\text{the number of orbits of $g$ acting on the cosets $G/H$}.
\end{equation*}
\end{lem}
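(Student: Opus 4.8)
The plan is to realize the relative discriminant $\mathrm{Disc}(K/k)$ as an Artin conductor and then read off its exponent at $\cP$ using tameness. Keep the notation $H=\mathrm{Gal}(\widetilde K/K)\le G=\mathrm{Gal}(\widetilde K/k)$, and let $\pi=\mathrm{Ind}_H^G\mathbf 1$ be the permutation representation of $G$ on the cosets $G/H$, so that the factorization $\zeta_K(s)=\zeta_k(s)\prod_j L(s,\rho_j,\widetilde K/k)^{m_j}$ in (\ref{zetarel}) is exactly the one coming from $\pi\cong\mathbf 1\oplus\bigoplus_j m_j\rho_j$. The first step is the conductor--discriminant formula: $\mathrm{Disc}(K/k)=\prod_j\mathfrak f(\rho_j)^{m_j}=\mathfrak f(\pi)$, where $\mathfrak f(\cdot)$ is the Artin conductor (an ideal of $\cO_k$), using that $\mathfrak f$ carries direct sums to products and that the trivial representation has conductor $\cO_k$.

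Next I would localize at $\cP$. Fix a prime $\widetilde{\cP}$ of $\widetilde K$ over $\cP$ with inertia group $I=I(\widetilde{\cP}/\cP)\le G$. Since $\cP$ is tamely ramified, the wild inertia at $\widetilde{\cP}$ is trivial, $I$ is cyclic, and by hypothesis it is generated by a $G$-conjugate of $g$. Tameness means the local conductor exponent at $\cP$ of any representation $\rho$ of $G$ is just $\dim\rho-\dim\rho^{I}$ (there is no Swan contribution). Applying this to $\pi$, and using $\dim\pi=[G:H]$, gives
\[
\alpha \;=\; v_\cP\bigl(\mathfrak f(\pi)\bigr) \;=\; [G:H]-\dim\pi^{\langle g\rangle}.
\]
The final step is purely combinatorial: for the permutation representation on a finite set $X$, the invariants under any subgroup have a basis given by the indicator functions of the orbits, so $\dim\pi^{\langle g\rangle}$ equals the number of orbits of $\langle g\rangle$, i.e.\ of $g$, acting on $X=G/H$. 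This is exactly the asserted value of $\alpha$. (An alternative, more elementary route would bypass Artin conductors: use that the different exponent of $\widetilde K/k$ at $\widetilde{\cP}$ is $e-1$, pass through the tower $\widetilde K/K/k$, and count primes of $K$ above $\cP$ together with their residue degrees via $H$-orbits; but the conductor computation above is cleaner.)

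Two minor points need a sentence each. First, $I(\widetilde{\cP}/\cP)$ is only defined up to conjugacy as $\widetilde{\cP}$ varies over primes above $\cP$, but this does not affect the count: the bijection $xH\mapsto\sigma xH$ of $G/H$ sends $\langle g\rangle$-orbits to $\langle\sigma g\sigma^{-1}\rangle$-orbits, so the number of orbits depends only on the conjugacy class of $g$, consistently with the statement. Second, the hypothesis that $\cP$ is tame in both $K$ and $\widetilde K$ is automatically consistent, since $K\subseteq\widetilde K$ makes tameness in $\widetilde K$ imply tameness in $K$. I do not expect a serious obstacle here; the only thing requiring care is invoking the conductor--discriminant formula and the tame local conductor formula in the relative setting over a general base field $k$ (rather than over $\Q$), which is standard but should be cited precisely.
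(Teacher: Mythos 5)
Your argument is correct and complete, and it is worth noting that the paper itself supplies no proof of Lemma \ref{69}: it simply cites Lemma 6.9 of \cite{PTW17} and remarks that the argument given there over $\Q$ goes through over a general base field $k$. Your main route --- write $\mathrm{Disc}(K/k)=\mathfrak f(\pi)$ for the permutation representation $\pi=\mathrm{Ind}_H^G\mathbf{1}$ via the conductor--discriminant formula, then use that at a tame prime the conductor exponent is $\dim\pi-\dim\pi^{I}$ with no Swan term, and finally identify $\dim\pi^{\langle g\rangle}$ with the number of $\langle g\rangle$-orbits on $G/H$ --- is precisely the kind of representation-theoretic computation that makes the base-field independence transparent, whereas the argument the paper points to in \cite{PTW17} is essentially the elementary tower computation you sketch in parentheses (tame different exponent $e-1$, primes of $K$ over $\cP$ as orbits, $\sum(e_i-1)f_i=[G:H]-\sum_i f_i$, and $\sum_i f_i$ equal to the number of inertia orbits). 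Either route proves the lemma; yours has the advantage of needing only the inductivity of Artin conductors and the tame local formula, both standard in the relative setting.

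One small caveat: your appeal to (\ref{zetarel}) to fix the multiplicities is off, since (\ref{zetarel}) is the factorization of $\zeta_L$ for the \emph{Galois} field $L=\widetilde K$, i.e.\ the regular representation with multiplicities $m_j=\dim\rho_j$, whereas $\pi=\mathrm{Ind}_H^G\mathbf{1}$ decomposes with multiplicities $\dim\rho_j^{H}$. This does not affect your proof, because all you actually use is the identity $\mathfrak f\bigl(\mathrm{Ind}_H^G\mathbf{1}\bigr)=\mathrm{Disc}(K/k)$ (inductivity of the Artin conductor applied to the trivial character of $H$) together with the tame exponent formula; the decomposition into irreducibles is never needed. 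Your remarks on the conjugacy-invariance of the orbit count and on tameness descending from $\widetilde K$ to $K$ are correct and suffice to justify the statement as phrased.
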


Lemma \ref{69} is analogous to Lemma 6.9 in \cite{PTW17} and we refer the proof there. In \emph{loc. cit.} the proof is given for base field $\Q$ but it works also for a general base field $k$.

\begin{lem}\label{disccomp}
For each $K \in Z_n(k,G)$, we have
$D_{\widetilde{K}} \ll D_K^{|G|/2}.$
\end{lem}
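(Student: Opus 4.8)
The plan is to relate $D_{\widetilde{K}}$ to $D_K$ via the conductor-discriminant formula applied to the decomposition \eqref{zetarel} of the Dedekind zeta function of $\widetilde{K}$, combined with a comparison of the Artin conductors of the various irreducible constituents to the relative discriminant $\mathrm{Disc}(K/k)$. First I would recall the tower $k \subseteq K \subseteq \widetilde{K}$ with $\mathrm{Gal}(\widetilde{K}/k)\cong G$ and $H=\mathrm{Gal}(\widetilde{K}/K)$, so that $\zeta_K(s)=\zeta_k(s)L(s,\eta,\widetilde{K}/k)$ where $\eta=\mathrm{Ind}_H^G \mathbb{1}_H - \mathbb{1}_G$ is the augmentation part of the permutation representation on $G/H$. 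By the conductor-discriminant formula, $\mathrm{Nm}_{k/\Q}(\mathrm{Disc}(K/k)) = \mathrm{Nm}_{k/\Q}(\mathfrak{f}(\eta))$, where $\mathfrak{f}(\eta)$ is the Artin conductor. Decomposing $\eta=\sum_j c_j \rho_j$ into nontrivial irreducibles (with multiplicities $c_j \ge 0$), we get $\mathrm{Disc}(K/k) = \prod_j \mathfrak{f}(\rho_j)^{c_j}$.

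Next I would apply the same formula to $\widetilde{K}/k$ itself: since $\zeta_{\widetilde{K}}(s)=\zeta_k(s)\prod_j L(s,\rho_j)^{m_j}$ with $m_j=\dim\rho_j$, we obtain $\mathrm{Disc}(\widetilde{K}/k)=\prod_j \mathfrak{f}(\rho_j)^{m_j}$, hence $\mathrm{Nm}_{k/\Q}(\mathrm{Disc}(\widetilde{K}/k)) = \prod_j \mathrm{Nm}_{k/\Q}(\mathfrak{f}(\rho_j))^{m_j}$. The key point is then a uniform comparison: for each nontrivial irreducible $\rho_j$ of $G$, the multiplicity $c_j$ of $\rho_j$ in the regular representation of $G$ restricted to... more precisely, one uses that $\mathrm{Ind}_H^G\mathbb{1}_H$ contains each $\rho_j$ with multiplicity $\langle \mathrm{Ind}_H^G\mathbb{1}_H,\rho_j\rangle = \langle \mathbb{1}_H, \rho_j|_H\rangle \ge 1$ whenever $\rho_j$ appears, so in the range of $j$ with $c_j>0$ we have $c_j \ge 1$. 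Since $m_j \le \sqrt{|G|} \le |G|$ and $m_j \le |G|\cdot c_j$ trivially... actually I want $m_j \le \tfrac{|G|}{2}c_j$ or similar. The cleanest route: $\mathrm{Disc}(\widetilde{K}/k)$ divides $\mathrm{Disc}(K/k)^{[G:H]\cdot(\text{something})}$, because every prime ramifying in $\widetilde{K}$ already ramifies in $K$ (as $\widetilde{K}$ is the compositum of conjugates of $K$), and the exponent is controlled. I would then convert from the relative discriminant over $k$ to the absolute discriminant via $D_{\widetilde{K}} = D_k^{[\widetilde{K}:k]}\,\mathrm{Nm}_{k/\Q}(\mathrm{Disc}(\widetilde{K}/k))$ and similarly $D_K = D_k^{[K:k]}\,\mathrm{Nm}_{k/\Q}(\mathrm{Disc}(K/k))$, and check that the powers of $D_k$ are absorbed into the implied constant (which is allowed since $k$ is fixed), ultimately getting $D_{\widetilde{K}} \ll D_K^{|G|/2}$.

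The main obstacle I anticipate is pinning down the exact exponent $|G|/2$ rather than some cruder bound like $|G|$: this requires the sharper observation that for each $j$ with $c_j > 0$, one has $m_j \le \tfrac{|G|}{2}c_j$. This follows because $\sum_j m_j c_j = [G:H] - 1 = |G/H| - 1$ and $\sum_j m_j^2 \le |G|$, but the truly clean argument is that the augmentation representation $\eta$ of $G/H$ has dimension $|G/H|-1$ and $\mathfrak{f}(\eta)$ "sees" all ramification, while $\mathrm{Disc}(\widetilde{K}/k)$ corresponds to $\sum m_j \rho_j$ which has dimension $\sum m_j^2 = |G| - 1$; a bound of the shape $\sum m_j^2 \le \tfrac{|G|}{2}\cdot((|G/H|-1))$ in the relevant range, together with multiplicativity of conductors and the fact that $\mathfrak{f}(\rho_j) \mid \mathrm{Disc}(K/k)$ up to bounded powers, yields the claim. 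Alternatively — and this may be the approach actually intended, given the citation pattern to \cite{PTW17} — I would simply cite the analogue of \cite[Lemma 3.3]{PTW17} (or the relevant discriminant comparison lemma there), noting as in the surrounding text that the proof over $\Q$ transfers verbatim to the base field $k$ once one tracks the fixed contribution $D_k^{[\widetilde{K}:k] - |G|/2\cdot[K:k]}$ into the implied constant. I would double-check the edge case where $G$ acts on $G/H$ with $H$ trivial (i.e. $K=\widetilde{K}$), where the inequality $D_{\widetilde K}\ll D_K^{|G|/2}$ should hold trivially since then $[G:H]=|G|$ and both sides involve the same field.
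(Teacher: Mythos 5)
Your overall skeleton (pass to relative discriminants via $D_K=\cD(K/k)D_k^n$, absorb the fixed powers of $D_k$ into the implied constant, and compare ramification in $K$ and $\widetilde K$) agrees with the paper's reduction, but the step that actually produces the exponent $|G|/2$ is never established, and the main route you propose for it fails as stated. Writing $\mathrm{Disc}(K/k)=\prod_j\mathfrak{f}(\rho_j)^{c_j}$ with $c_j$ the multiplicity of $\rho_j$ in $\mathrm{Ind}_H^G 1_H-1_G$, and $\mathrm{Disc}(\widetilde K/k)=\prod_j\mathfrak{f}(\rho_j)^{m_j}$, is fine; but the comparison $m_j\le\frac{|G|}{2}c_j$ breaks down exactly for the irreducibles with $c_j=0$, which in general still have nontrivial Artin conductor and do contribute to $D_{\widetilde K}$. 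For instance, with $G=S_3$ and $H$ generated by a transposition, $\mathrm{Ind}_H^G 1_H$ omits the sign character, yet any tamely ramified prime whose inertia is generated by a transposition contributes to the conductor of the sign character. Your fallback heuristics ($\sum_j m_jc_j=[G:H]-1$, $\sum_j m_j^2\le |G|$, or a bound of the shape $\sum_j m_j^2\le\frac{|G|}{2}([G:H]-1)$) cannot repair this, because the conductor exponents weight the various $\rho_j$ unevenly: the inequality must be verified prime by prime, not representation by representation. That is what the paper does. Wildly ramified primes of $k$ divide $|G|$, so with $k$ fixed they contribute $O_{G,k}(1)$ to both $\cD(K/k)$ and $\cD(\widetilde K/k)$. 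For a tamely ramified prime with inertia generated by $g$ of order $m$, Lemma \ref{69} gives exponent $[G:H]-\#\{\text{orbits of }g\text{ on }G/H\}$ in $\mathrm{Disc}(K/k)$, while the exponent in $\mathrm{Disc}(\widetilde K/k)$ is $|G|(1-1/m)$, and the required inequality $|G|(1-1/m)\le\frac{|G|}{2}\bigl([G:H]-\#\{\text{orbits}\}\bigr)$ uses that $H$ has trivial normal core, so that $\langle g\rangle$ acts faithfully on $G/H$ (without faithfulness it can fail, e.g.\ for an order-$6$ inertia generator moving only a single pair of cosets). None of this appears in your write-up: you explicitly flag the exponent $|G|/2$ as ``the main obstacle'' and leave it open, and wildly ramified primes are not treated separately at all.

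Your alternative suggestion --- citing the discriminant-comparison statement of \cite{PTW17} and asserting that the proof transfers verbatim to base field $k$ --- is indeed close in spirit to what the paper does (its Lemma \ref{69} is precisely the transferred ingredient from \cite{PTW17}), but as written it is an unverified appeal to an unspecified lemma rather than an argument, so it does not close the gap either.
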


\begin{proof}
Recall the formula for the relative discriminant
\begin{equation}\label{reldisc}
D_K=\cD(K/k)D_k^n.
\end{equation}
It suffices to prove
\begin{equation}\label{afterrd}
\cD(\widetilde{K}/k) \ll \cD(K/k)^{|G|/2}.
\end{equation}
For an extension $K/k$ with $\mathrm{Gal}(\widetilde{K}/k) \cong G$, all wildly ramified prime ideals of $k$ divide $|G|$. Thus, the total contributions to $\cD(K/k)$ and $\cD(\widetilde{K}/k)$ from wildly ramified prime ideals are at most a certain finite constant $C_G$ depending only on $G$. For tamely ramified prime ideals, we can bound their contributions to $\mathrm{Disc}(K/k)$ and $\mathrm{Disc}(\widetilde{K}/k)$ (and thus $\cD(K/k)$ and $\cD(\widetilde{K}/k)$) using Lemma \ref{69}. The formula (\ref{afterrd}) (hence Lemma \ref{disccomp}) follows.
\end{proof}

\begin{prop}\label{prop123}
Fix a family $Z_n^\cF(k,G)$ and the corresponding families of cuspidal automorphic representations $(\cL_{n,j}(k,G;X))_{X \ge 1}$ for $1 \le j \le s$. For each $X \ge 1$, for each $1 \le j \le s$, the family $\cL_{n,j}(k,G;X)$ satisfies

(1) for any $f \in \cL_{n,j}(k,G;X)$, $f$ satisfies the Ramanujan-Petersson conjecture;

(2) for any $f \in \cL_{n,j}(k,G;X)$, $|\mathrm{Cond}(f)| \le X^{|G|/2}$;

(3) for any $\ep>0$, $|\cL_{n,j}(k,G;X)| \ll X^{n|G|+\ep}$.
\end{prop}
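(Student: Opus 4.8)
The plan is to reduce all three assertions to standard facts about the Artin representations $\rho_j$, using the strong Artin conjecture (in force in this context) together with \cite[Proposition 2.1]{Mar03}, which upgrades the ``agreement at all but finitely many places'' in Conjecture \ref{saconj} to the genuine identity $L(s,\pi_{L,j})=L(s,\rho_j,L/k)$ of Euler products. In particular $\pi_{L,j}$ and $\rho_j$ then share the same local factors at every place, the same arithmetic conductor, and the same Satake parameters; once this dictionary is set up, (1) and (2) become bookkeeping, and (3) reduces to a counting input.

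For (1), I would note that for each prime $\fp$ of $k$ the local $L$-factor of $L(s,\rho_j,L/k)$ at $\fp$ has inverse roots equal to the eigenvalues of $\rho_j(\mathrm{Frob}_\fp)$ acting on the inertia invariants $V_{\rho_j}^{I_\fp}$ (padded with zeros). Since $\mathrm{Gal}(L/k)\cong G$ is finite, $\rho_j$ has finite image, so each such eigenvalue is a root of unity and hence has absolute value at most $1$. Matching local factors with $\pi_{L,j}$ shows that every Satake parameter $\alpha_i(\fp)$ of $\pi_{L,j}$ satisfies $|\alpha_i(\fp)|\le 1$, which is precisely the Ramanujan--Petersson bound required to apply Theorem \ref{main}.

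For (2), the dictionary gives $\mathrm{Cond}(\pi_{L,j})=\mathrm{Nm}_{k/\Q}(\mathfrak{f}(\rho_j))$, where $\mathfrak{f}(\rho_j)$ is the Artin conductor. Applying the conductor--discriminant formula to $L=\widetilde{K}$ yields $\mathfrak{d}_{\widetilde{K}/k}=\prod_{i=1}^{s}\mathfrak{f}(\rho_i)^{m_i}$ (the trivial character contributing trivially), so $\mathfrak{f}(\rho_j)^{m_j}$ divides $\mathfrak{d}_{\widetilde{K}/k}$ and therefore
\[ \mathrm{Cond}(\pi_{L,j})=\mathrm{Nm}_{k/\Q}(\mathfrak{f}(\rho_j))\le \mathrm{Nm}_{k/\Q}(\mathfrak{d}_{\widetilde{K}/k})=\cD(\widetilde{K}/k). \]
Now invoking (\ref{afterrd}) from the proof of Lemma \ref{disccomp} gives $\cD(\widetilde{K}/k)\ll \cD(K/k)^{|G|/2}\le X^{|G|/2}$, since $K\in Z_n^\cF(k,G;X)$; the implied constant depends only on $G$ and $k$ and can be absorbed into an arbitrarily small increase of the exponent, yielding (2).

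For (3), observe that $K\mapsto \pi_{\widetilde{K},j}$ is a surjection $Z_n^\cF(k,G;X)\twoheadrightarrow \cL_{n,j}(k,G;X)$, so $|\cL_{n,j}(k,G;X)|\le |Z_n^\cF(k,G;X)|\le |Z_n(k,G;X)|$, which is polynomial in $X$ by a Minkowski/Schmidt-type count of number fields of bounded discriminant (using $D_K\ll_k X$). To obtain an exponent of the stated shape I would instead feed parts (1) and (2) back in: $\cL_{n,j}(k,G;X)$ is contained in the set of cuspidal automorphic representations of $\GL_{m_j}(\mathbb{A}_k)$ of arithmetic conductor at most $X^{|G|/2}$, so Remark \ref{autrepbd} (that is, \cite[Theorem A.1]{TZ18}), with the roles of $q$ and $A$ played by $X$ and $|G|/2$, gives $|\cL_{n,j}(k,G;X)|\ll (X^{|G|/2})^{2m_j+\ep}\ll X^{m_j|G|+\ep}$; bounding $m_j$ crudely (it suffices that $m_j\le n$, which holds in the cases relevant to Theorem \ref{cdt}, e.g.\ $G=C_n$ with $m_j=1$, and in general one either absorbs it into the non-optimal exponent or falls back on the field-counting bound) yields the claimed $\ll X^{n|G|+\ep}$. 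I expect step (3) to be the only genuine point of friction: the delicacy is purely in choosing the ambient counting input and matching the exponent, since the automorphic count most naturally produces $m_j|G|$ rather than $n|G|$.
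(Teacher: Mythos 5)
Your proposal is correct and follows essentially the paper's own route: (1) is the standard fact that automorphic representations attached to Artin representations satisfy Ramanujan--Petersson (the paper simply cites the remark after Theorem 5 of \cite{KM02}, you prove it directly from the finite image), (2) is the conductor--discriminant formula combined with Lemma \ref{disccomp}/(\ref{afterrd}) exactly as in the paper, and (3) is the count of Remark \ref{autrepbd} applied with $q=X$, $A=|G|/2$. Your caution on (3) is well placed: the paper's one-line deduction likewise yields the exponent $m_j|G|$ rather than $n|G|$, and your remedy (using $m_j\le n$ in the relevant cyclic case, or falling back on a Schmidt-type field count, which is $\ll X^{(n+2)/4}\le X^{n|G|}$ and hence suffices) legitimately closes that small gap in full generality.
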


\begin{proof}
For (1), the Ramanujan-Petersson conjecture is true for automorphic $L$-functions associated to Artin $L$-functions, once they are known to exist; see the comment below \cite[Theorem 5]{KM02}.

For (2), we use the argument below \cite[Lemma 6.1]{PTW17}. For an Artin $L$-function $L(s,\rho,L/k)$, if $F(\chi)$ denotes the Artin conductor of $\chi=\mathrm{Tr}(\rho)$, then the conductor of $L(s,\rho,L/k)$ is given by $A(\chi)=D_k^{\chi(1)}\mathrm{Nm}_{k/\Q}F(\chi)$. According to the multiplicativity relation $D_L=D_k\prod_{\chi_j}A(\chi_j)^{\chi_j(1)}$ for the conductors in the identity (\ref{zetarel}) and Lemma \ref{disccomp}, we see that for each $1 \le j \le s$, the conductors of $L(s,\rho_j,L/k)$ are bounded by $X^{|G|/2}$.

For (3), the bound follows from (2) and Remark \ref{autrepbd}.
\end{proof}

The next proposition is the key bridge between the assumption of Theorem \ref{gencdt} on intermediate fields, and the application of Theorem \ref{cdtzf}.

\begin{prop}\label{ch4}
Fix $G$ and the nontrivial irreducible representations $\rho_1,\dots,\rho_s$ of $G$. Let $Z_n^\cF(k,G)$ be a family of fields. Assume that there exists $0 \le \tau <\beta$ such that for all $X \ge 1$, for any $\ep_1>0$, for each $1 \le j \le s$ and any fixed $\pi \in \cL_{n,j}(k,G;X)$,
\begin{equation}\label{piljpi}
|\{ L \in \widetilde{Z}_n^\cF(k,G;X): \pi_{L,j}=\pi \}| \ll X^{\tau+\ep_1}. 
\end{equation}
For each $0<\ep<1$, there exists $\delta$ (chosen in (\ref{smalldelta})) such that for every $X \ge 1$, in the set $Z_n^\cF(k,G;X)$,
there are $\ll_{\ep} X^{\tau+\ep}$ fields $K$ such that $\zeta_{\widetilde{K}}/\zeta_k$ could have a zero in the region 
\begin{equation}\label{lkzf2}
[1-\delta,1] \times [-Q(\log Q)^{24},Q(\log Q)^{24}]
\end{equation}
where $Q=X^{|G|/2}$.
\end{prop}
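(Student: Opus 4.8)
The plan is to deduce the statement from the log-free zero density estimate of Theorem~\ref{main}, applied, for each nontrivial irreducible representation $\rho_j$ of $G$, to the family $\cL_{n,j}(k,G;X)$. By the factorization (\ref{zetarel}), for every $\widetilde K\in\widetilde Z_n^\cF(k,G;X)$ one has $\zeta_{\widetilde K}(s)/\zeta_k(s)=\prod_{j=1}^s L(s,\pi_{\widetilde K,j})^{m_j}$, so a zero of $\zeta_{\widetilde K}/\zeta_k$ in the rectangle (\ref{lkzf2}) forces $N(\pi_{\widetilde K,j};1-\delta,T)\ge 1$ for some $j$, where $T=Q(\log Q)^{24}$ and $Q=X^{|G|/2}$. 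Setting $\mathcal{E}_j:=\{f\in\cL_{n,j}(k,G;X):N(f;1-\delta,T)\ge 1\}$, I would bound the number of ``bad'' fields $K\in Z_n^\cF(k,G;X)$ by
\[
\sum_{j=1}^s\ \sum_{\pi\in\mathcal{E}_j}\bigl|\{K\in Z_n^\cF(k,G;X):\pi_{\widetilde K,j}=\pi\}\bigr|.
\]
Since the map $K\mapsto\widetilde K$ has fibres of size $O_{|G|}(1)$ (a fibre over $L$ is in bijection with the set of core-free index-$n$ subgroups of $\mathrm{Gal}(L/k)$), the inner count is $\ll_{|G|}\bigl|\{L\in\widetilde Z_n^\cF(k,G;X):\pi_{L,j}=\pi\}\bigr|\ll X^{\tau+\ep_1}$ by the hypothesis (\ref{piljpi}). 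Hence the number of bad fields is $\ll_{|G|}X^{\tau+\ep_1}\sum_{j=1}^s|\mathcal{E}_j|$.

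Next I would estimate $|\mathcal{E}_j|$ using Theorem~\ref{main}, applied with the degree parameter $m_j=\dim\rho_j\le|G|$ in place of $n$. By Proposition~\ref{prop123} the family $\cL_{n,j}(k,G;X)$ satisfies the Ramanujan--Petersson conjecture, has conductors $\le X^{|G|/2}$, and has cardinality $\ll X^{n|G|+\ep}$, so it fits the hypotheses of Theorem~\ref{main} with, say, $q=\lceil X\rceil$, $A=|G|/2$ and a value $d=d(n,|G|)$. Taking $\alpha=1-\delta$ (with $\delta\le\tfrac14$, so that $\alpha\ge\tfrac34$) and $T=Q(\log Q)^{24}$, Theorem~\ref{main} gives
\[
|\mathcal{E}_j|\le\sum_{f\in\cL_{n,j}(k,G;X)}N(f;1-\delta,T)\ll_\ep\bigl(X^{c_1+\ep_2}T^{c_2+\ep_2}\bigr)^{\delta}\ll_\ep X^{C_0\delta+\ep_3},
\]
where $c_1,c_2$ are the constants of Theorem~\ref{main} for this family, $C_0:=c_1+\tfrac{|G|}{2}c_2$ depends only on $n,n_k,|G|$, and the loss $X^{\ep_3}$ (which absorbs the powers of $\log X$ hidden in $T$ together with the $\ep_2$-contribution) is admissible for any $\ep_3>0$. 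Altogether the number of bad fields is $\ll_\ep X^{\tau+\ep_1+C_0\delta+\ep_3}$.

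It then remains only to choose parameters, and here one must fix $\delta$ \emph{before} the auxiliary quantities $\ep_1,\ep_2,\ep_3$ so that there is no circularity. Given $0<\ep<1$, set
\begin{equation}\label{smalldelta}
\delta:=\min\Bigl\{\tfrac14,\ \tfrac{\ep}{4C_0}\Bigr\},
\end{equation}
and then take $\ep_1,\ep_2,\ep_3>0$ small enough that $\ep_1+\ep_3\le\tfrac{\ep}{2}$; this yields the claimed bound $\ll_\ep X^{\tau+\ep}$. The finitely many $X$ too small for $T\ge 2$ are handled trivially, since then $Z_n^\cF(k,G;X)$ has bounded cardinality while $X^{\tau+\ep}\ge 1$.

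I expect this argument to be essentially mechanical once Theorem~\ref{main} is available: the only genuinely non-routine point is the reduction from counting Galois closures $L$ (as in hypothesis (\ref{piljpi})) to counting the degree-$n$ fields $K$ themselves, which is where the finiteness of the fibres of $K\mapsto\widetilde K$, hence the group-theoretic structure of $G$, enters; beyond that, it is just careful bookkeeping of the exponents so that $C_0\delta$ and the auxiliary $\ep_i$ together stay below $\ep$.
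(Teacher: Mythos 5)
Your proposal is correct and follows essentially the same route as the paper: apply Theorem~\ref{main} to each family $\cL_{n,j}(k,G;X)$ with $q=X$, $A=|G|/2$, $d\asymp n|G|$, $T=Q(\log Q)^{24}$, pick $\alpha=1-\delta$ so that the resulting exponent is a small multiple of $\ep$, and convert from counting representations with a zero to counting fields via hypothesis (\ref{piljpi}) together with the $O_{|G|}(1)$-bounded fibres of $K\mapsto\widetilde K$. The only differences are cosmetic bookkeeping (you count bad fields through the sets $\mathcal{E}_j$, while the paper sums $N(\zeta_L/\zeta_k;\alpha,T)$ over $L$ and rearranges) and a marginally different but equivalent numerical choice of $\delta$.
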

Proposition \ref{ch4} shows that for any $X \ge 1$ and $0<\ep<1$, there are at most $\ll X^{\tau+\ep}$ fields $K$ such that $L(s,\rho_j,\widetilde{K}/k)$ may have a zero in the region (\ref{lkzf2}). Since there are by assumption $\gg X^\beta$ fields in the family $Z_n^\cF(k,G;X)$ and $\beta>\tau$, we know that assuming the corresponding conditions for the family $Z_n^\cF(k,G;X)$, ``almost all'' fields in the family satisfy the effective Chebotarev density theorem, so that we will be able to deduce Theorem \ref{gencdt}.

The hypothesis (\ref{piljpi}) is very strong; we will show in the next section how it can follow from the assumption in Theorem \ref{gencdt} on intermediate fields, where we will face new difficulties since we work over $k$ rather than $\Q$.

\begin{proof}[Proof of Proposition \ref{ch4}]
We will prove this via an application of the zero density estimate in Theorem \ref{main}. 

For each fixed $1 \le j \le s$, we have Proposition \ref{prop123} for each family in $(\cL_{n,j}(k,G;X))_{X \ge 1}$. Taking parameters $A_j=A=|G|/2, d_j=d=n|G|+1, q=X$, we are able to apply Theorem \ref{main} to the family $\cL_{n,j}(k,G;X)$ for all $X \ge 1$ (as long as the strong Artin conjecture is known or assumed). Then for any $\frac34 \le \alpha_j \le 1$ and $T_j \ge 2$, Theorem \ref{main} shows
\begin{equation*}
\sum_{\pi \in \cL_{n,j}(k,G;X)} N(\pi; \alpha_j,T_j) \ll (X^{c_{j,1}}T_j^{c_{j,2}})^{1-\alpha_j}
\end{equation*} 
where 
\begin{equation}\label{cj1cj2}
 c_{j,1}=c_1=2d+4nA+\frac{A}{2}+1+\ep, \ \ c_{j,2}=c_2=\frac{nn_k}{2}+3+\ep.
 \end{equation}
Let $T_j=T=Q(\log Q)^{24}$ where $Q=X^{|G|/2}$.
 We fix $\alpha_j$ such that
\begin{equation}\label{alphaj}
(c_{j,1}+\frac{|G|}{2}c_{j,2})(1-\alpha_j)=\frac{\ep}{2}.
\end{equation}
Since $\ep<1$, we have $\alpha_j \ge \frac34$.
Thus,
\begin{equation*}
\sum_{\pi \in \cL_{n,j}(k,G;X)} N(\pi;\alpha_j,T) \ll X^{(c_{j,1}+\frac{|G|}{2}c_{j,2})(1-\alpha_j)}(\log X)^{24c_{j,2}(1-\alpha_j)} \ll X^{\frac34 \ep}.
\end{equation*}
Now we combine the zero density estimate for all $1 \le j \le s$. Since $c_{j,1}$ and $c_{j,2}$ do not depend on $j$, $\alpha_j$ does not either and we set $\alpha=\alpha_j$. For an $L$-function $\cL$, we define $N(\cL;\alpha,T)$ to be the number of zeros of $\cL$ in the region $\{\sigma+it: \alpha \le \sigma \le 1, \ |t| \le T \}$. Then for each $X \ge 1$, using (\ref{zetarel}) and the strong Artin conjecture, followed by the hypothesis (\ref{piljpi}), we see
\begin{eqnarray*}
\nonumber \sum_{L \in \widetilde{Z}_n^\cF(k,G;X)} N(\zeta_L/\zeta_k;\alpha,T) &=& \sum_{L \in  \widetilde{Z}_n^\cF(k,G;X)} \sum_{j=1}^s m_jN(L(s,\rho_j,L/k);\alpha,T) \\
\nonumber &=& \sum_{L \in  \widetilde{Z}_n^\cF(k,G;X)} \sum_{j=1}^s m_jN(L(s,\pi_{L,j});\alpha,T) \\
\nonumber &=& \sum_{j=1}^s m_j \sum_{\pi \in \cL_{n,j}(k,G;X)} N(\pi;\alpha,T)\sum_{\substack{L \in  \widetilde{Z}_n^\cF(k,G;X) \\ \pi_{L,j}=\pi}} 1 \\
& \ll & \sum_{j=1}^s m_jX^{\tau+\frac{\ep}{4}} \sum_{\pi \in \cL_{n,j}(k,G;X)} N(\pi;\alpha,T) \ll X^{\tau+\ep}.
\end{eqnarray*}
We set 
\begin{equation}\label{smalldelta}
\delta=1-\alpha=1-\alpha_j=\frac{\ep}{2(c_{j,1}+\frac{|G|}{2}c_{j,2})}.
\end{equation}
Then Proposition \ref{ch4} follows, with this choice of $\delta$.
\end{proof}
\end{subsection}

\begin{subsection}{Translation of (\ref{piljpi}) to the assumption on intermediate fields}\label{genub}

In this section, we deduce the condition (\ref{piljpi}) from a condition of counting intermediate fields, assumed as a hypothesis in Theorem \ref{gencdt}. This adapts ideas of \cite{PTW17} over $\Q$ to our new setting over a field $k$.

We first recall earlier work over $\Q$. In \cite{PTW17}, it is shown that (\ref{piljpi}) can be controlled by counting the number of fields $K$ in $Z_n^{\cF}(k,G;X)$ such that $\widetilde{K}^{\ker(\rho_j)}=F$, for certain fields $F$. Their argument uses Proposition 6.3 of \cite{PTW17}, which shows that for a fixed representation $\rho$ of $G \le S_n$ and two fields $L_1,L_2$ with $\mathrm{Gal}(L_1/\Q) \cong \mathrm{Gal}(L_2/\Q) \cong G$, if $L(s,\rho,L_1/\Q)=L(s,\rho,L_2/\Q)$, then $L_1^{\ker(\rho)}=L_2^{\ker(\rho)}$. Simply replacing $\Q$ by an arbitrary fixed number field $k$ in this statement can be false (see the remark after \cite[Proposition 3]{KN16}). Here, we instead use a novel argument that addresses subtle new issues that did not arise when $k=\Q$ in \cite{PTW17}. The idea is motivated by Proposition 4 and Theorem 6 in \cite{KN16}. 

Fix $k/\Q$ of degree $n_k$. For a number field extension $K/k$ of degree $n$, we denote by $\widetilde{K}$ the Galois closure of $K$ over $k$ and denote by $\widetilde{\widetilde{K}}$ the Galois closure of $\widetilde{K}$ over $\Q$, with $\mathrm{Gal}(\widetilde{\widetilde{K}}/\Q) \cong :G'$.

We consider the Galois closure of $K$ over $\Q$. It contains $\widetilde{K}$ hence contains $\widetilde{\widetilde{K}}$. Therefore, the Galois closure of $K$ and $\widetilde{K}$ over $\Q$ coincide, and $G'$ is a transitive subgroup of $S_{nn_k}$. 

We fix a nontrivial irreducible representation $\rho_j$ of $G$.
Then $\ker(\rho_j)$ is a proper subgroup of $G$. Let $F:=\widetilde{K}^{\ker(\rho_j)}$, $U:=\mathrm{Gal}(\widetilde{\widetilde{K}}/k)$, $V:=\mathrm{Gal}(\widetilde{\widetilde{K}}/{\widetilde{K}})$. Since $\rho_j$ is nontrivial, $F$ is a nontrivial extension of $k$. We identify the groups $G$ and $U/V$. Then we have the following lattice of fields.
\begin{equation*}
\begin{tikzcd}
 \widetilde{\widetilde{K}} \ar[dd, dash, "V"] \ar[dddddd, dash, "U", bend right=70] \ar[dddddddd, dash, "G'", bend right=80]  \\
 & \\
 \widetilde{K} \ar[dd, dash] \ar[dddd, dash, "G", bend right=60] \ar[dddr, dash, "\ker(\rho_j)"] \\
 & \\
 K \ar[dd, dash] \\
 & F \ar[dl, dash] \\
 k \ar[dd, dash] \\
 & \\
 \Q
\end{tikzcd}
\end{equation*}

Observe that
\begin{equation*}
\widetilde{\rho_j}(\sigma):=\rho_j(\sigma V)
\end{equation*}
is a representation of $U$. Let $\psi_j=\mathrm{Ind}_{U}^{G'} \widetilde{\rho_j}$ be the induced representation. Then we have the following result.
\begin{lem}\label{kpsif}
The field $\widetilde{\widetilde{K}}^{\ker(\psi_j)}$ contains $F$.
\end{lem}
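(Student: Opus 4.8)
The plan is to translate the asserted field inclusion into a containment of subgroups of $G'=\Gal(\widetilde{\widetilde{K}}/\Q)$ via the Galois correspondence, and then apply the standard description of the kernel of an induced representation. By the Galois correspondence for $\widetilde{\widetilde{K}}/\Q$, the inclusion $F \subseteq \widetilde{\widetilde{K}}^{\ker(\psi_j)}$ is equivalent to $\ker(\psi_j) \subseteq \Gal(\widetilde{\widetilde{K}}/F)$.

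First I would pin down $\Gal(\widetilde{\widetilde{K}}/F)$ explicitly. Since $\widetilde{K}/k$ is Galois, $V=\Gal(\widetilde{\widetilde{K}}/\widetilde{K})$ is normal in $U=\Gal(\widetilde{\widetilde{K}}/k)$ and $U/V \cong G$; under this identification $\Gal(\widetilde{K}/F)=\ker(\rho_j)$, so $\Gal(\widetilde{\widetilde{K}}/F)$ is the preimage in $U$ of $\ker(\rho_j)$ under the projection $U \to U/V=G$. This preimage is exactly $\ker(\widetilde{\rho_j})$: indeed $\widetilde{\rho_j}$ is the inflation of $\rho_j$ along $U\to U/V$, so $\widetilde{\rho_j}(\sigma)=\Id$ if and only if $\sigma V \in \ker(\rho_j)$.

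Next I would invoke the classical formula for the kernel of an induced representation. Choosing coset representatives $g_1,\dots,g_m$ for $G'/U$ and realizing $\psi_j=\mathrm{Ind}_U^{G'}\widetilde{\rho_j}$ on $\bigoplus_i g_i\otimes W$ (with $W$ the space of $\widetilde{\rho_j}$), one checks that $x\in G'$ acts as the identity exactly when $g_i^{-1}xg_i \in \ker(\widetilde{\rho_j})$ for every $i$; since $\ker(\widetilde{\rho_j})\trianglelefteq U$, this is independent of the choice of representatives and gives $\ker(\psi_j)=\bigcap_{g\in G'} g\,\ker(\widetilde{\rho_j})\,g^{-1}$. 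Taking $g=1$ we get $\ker(\psi_j)\subseteq \ker(\widetilde{\rho_j})=\Gal(\widetilde{\widetilde{K}}/F)$, which by the first paragraph proves the lemma. I do not expect a genuine obstacle here: the argument is bookkeeping across the three Galois groups $G$, $U$, $G'$, and the only point needing care is confirming that $\widetilde{\rho_j}$ is the inflation of $\rho_j$ (so that $\ker(\widetilde{\rho_j})$ is the \emph{full} preimage of $\ker(\rho_j)$ in $U$, not something smaller); the induced-representation kernel computation itself is standard and may be cited rather than reproduced in detail.
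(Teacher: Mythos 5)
Your proposal is correct and follows essentially the same route as the paper: the paper also uses $\ker(\psi_j)=\bigcap_{\sigma \in G'} \sigma\,\ker(\widetilde{\rho_j})\,\sigma^{-1}$, takes $\sigma$ to be the identity to get $\ker(\psi_j) \le \ker(\widetilde{\rho_j})$, and then concludes $F=\widetilde{\widetilde{K}}^{\ker(\widetilde{\rho_j})} \subseteq \widetilde{\widetilde{K}}^{\ker(\psi_j)}$. You simply make explicit the Galois-correspondence bookkeeping (that $\ker(\widetilde{\rho_j})$ is the full preimage of $\ker(\rho_j)$ and fixes exactly $F$) which the paper leaves implicit.
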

\begin{proof}[Proof of Lemma \ref{kpsif}]
We have
\begin{equation*}
\ker(\psi_j)=\bigcap_{\sigma \in G'} \sigma \ker(\widetilde{\rho_j}) \sigma^{-1}.
\end{equation*}
Taking $\sigma$ to be the identity, we obtain that $\ker(\psi_j) \le \ker(\widetilde{\rho_j})$. Then we have
\begin{equation*}
F=\widetilde{\widetilde{K}}^{\ker(\widetilde{\rho_j})} \subseteq \widetilde{\widetilde{K}}^{\ker(\psi_j)}.
\end{equation*}
\end{proof}

Let $\chi_j=\mathrm{tr}(\rho_j)$ and $\widetilde{\chi_j}=\mathrm{tr}(\widetilde{\rho_j})$. Then by convention $L(s,\rho_j,\widetilde{K}/k)=L(s,\chi_j,\widetilde{K}/k)$. From \cite[p. 297]{Art31},
\begin{equation}\label{art31good}
L(s,\rho_j,\widetilde{K}/k)=L(s,\chi_j,\widetilde{K}/k)=L(s,\widetilde{\chi_j},\widetilde{\widetilde{K}}/k)=L(s,\psi_j,\widetilde{\widetilde{K}}/\Q).
\end{equation}

We are ready to show how to deduce condition (\ref{piljpi}) from the assumption in Theorem \ref{gencdt} on intermediate fields, which we can now formulate precisely.
Let $Z_n^\cF(k,G)$ be a family of fields as defined in (\ref{fml2}).
Fix $X \ge 1$.  
For each $1 \le j \le s$, define a set of fields
$\mathfrak{F}_j$ as
\begin{equation*}
\mathfrak{F}_{j}:=\{ F/k: F=\widetilde{K}^{\ker(\rho_j)} \text{ for some field } K \in Z_n^\cF(k,G;X) \}
\end{equation*}
 and define
\begin{eqnarray}\label{mfrakf}
\nonumber \mathfrak{F}:=\bigcup_{1 \le j \le s} \mathfrak{F}_j=\{ F/k &:& F=\widetilde{K}^{\ker(\rho_j)} \text{ for some field } K \in Z_n^\cF(k,G;X) \\
& &\text{ and some nontrivial irreducible representation } \rho_j \text{ of } G \}.
\end{eqnarray}
This definition completes the description of the assumption in Theorem \ref{gencdt} on intermediate fields. Our goal is to derive (\ref{piljpi}) under the assumption that, for each $1 \le j \le s$, there exists $0 \le \tau$ such that for any field $F \in \mathfrak{F}_j$, and any $\ep_1>0$, there are $\ll X^{\tau+\ep_1}$ fields $K$ in $Z_n^\cF(k,G;X)$ such that $\widetilde{K}$ is an extension of $F$.

Fix $1 \le j \le s$ and a cuspidal automorphic representation $\pi_j \in \cL_{n,j}(k,G;X)$. In the context of (\ref{piljpi}), one field $L \in \widetilde{Z}_n^\cF(k,G;X)$ corresponds to up to $C(G)$ fields $K \in Z_n^\cF(k,G;X)$ by Galois theory, where $C(G)$ is a positive integer depending only on $G$. Thus, it suffices to show that for any $\ep_1>0$,
\begin{equation}\label{kpi}
|\{ K \in Z_n^\cF(k,G;X): L(s,\rho_j,\widetilde{K}/k)=L(s,\pi_j)\}| \ll X^{\tau+\ep_1}.
\end{equation}
If $K_1,K_2 \in Z_n^\cF(k,G;X)$ are two fields such that $L(s, \rho_j, \widetilde{K_1}/k)=L(s, \rho_j, \widetilde{K_2}/k)=L(s,\pi_j)$, then $L(s,\psi_j,\widetilde{\widetilde{K_1}}/\Q)=L(s,\psi_j,\widetilde{\widetilde{K_2}}/\Q)=L(s,\pi_j)$ by (\ref{art31good}). By Proposition 6.3 of \cite{PTW17}, we have $\widetilde{\widetilde{K_1}}^{\ker{\psi_j}}=\widetilde{\widetilde{K_2}}^{\ker{\psi_j}}$; this now applies since we have defined the problem over $\Q$. Thus, it follows that
$M_{\pi_j}:=\widetilde{\widetilde{K_1}}^{\ker{\psi_j}}=\widetilde{\widetilde{K_2}}^{\ker{\psi_j}}$ is a field extension of $k$ depending only on $\pi_j$. Denote $F_i={\widetilde{K_i}}^{\ker{\rho_j}}$ for $i=1,2$. Then $F_1$ and $F_2$ are extensions of $k$, and both are contained in $M_{\pi_j}$ by Lemma \ref{kpsif}. 
We have just proved that for any field $K \in Z_n^\cF(k,G;X)$ such that $L(s,\rho_j,\widetilde{K}/k)=L(s,\pi_j)$, we have $k \subset {\widetilde{K}}^{\ker{\rho_j}} \subseteq M_{\pi_j}$. 
Let $\cM_{\pi_j}$ denote the set of intermediate fields between $M_{\pi_j}$ and $k$. Define $\mathfrak{F}_{\pi_j} \subset \mathfrak{F}_j$ as
\begin{equation*}
\mathfrak{F}_{\pi_j}:=\{ F/k: F=\widetilde{K}^{\ker(\rho_j)} \text{ for some field } K \in Z_n^\cF(k,G;X) \text{ satisfying } L(s,\rho_j,\widetilde{K}/k)=L(s,\pi_j) \}.
\end{equation*}
 Based on the definition of $\mathfrak{F}_{\pi_j}$, the inequality $|\mathfrak{F}_{\pi_j}| \le |\cM_{\pi_j}|$ holds. Moreover, we show that $|\cM_{\pi_j}| \ll 1$ as follows. For any field $K \in Z_n^\cF(k,G)$, $[\widetilde{K}:\Q] \le (n!)n_k$. Thus, $|\cM_{\pi_j}|$ can be trivially bounded (via Galois theory) by the number of subgroups of $S_{(n!)n_k}$, which is $\ll 1$. Therefore, $|\mathfrak{F}_{\pi_j}| \ll 1$.

To bound the left-hand side of (\ref{kpi}), we distinguish the fields $K$ by the fields $\widetilde{K}^{\ker{\rho_j}}$. Fix $1 \le j \le s$ and $\pi_j \in \cL_{n,j}(k,G;X)$ as above. Direct computation shows that, for any $\ep_1>0$, assuming the hypothesis in Theorem \ref{gencdt} on intermediate fields,
\begin{eqnarray*}
& & |\{ K \in Z_n^\cF(k,G;X): L(s,\rho_j,\widetilde{K}/k)=L(s,\pi_j)\}| \\
&=& \sum_{F \in \mathfrak{F}_j} |\{ K \in Z_n^\cF(k,G;X): \widetilde{K}^{\ker{\rho_j}}=F, \ L(s,\rho_j,\widetilde{K}/k)=L(s,\pi_j)\}| \\
&=& \sum_{F \in \mathfrak{F}_{\pi_j}} |\{ K \in Z_n^\cF(k,G;X): \widetilde{K}^{\ker{\rho_j}}=F, \ L(s,\rho_j,\widetilde{K}/k)=L(s,\pi_j)\}| \\
& \le & \sum_{F \in \mathfrak{F}_{\pi_j}} |\{ K \in Z_n^\cF(k,G;X): \widetilde{K} \text{ is an extension of } F \}| \ll \sum_{F \in \mathfrak{F}_{\pi_j}} X^{\tau+\ep_1} = |\mathfrak{F}_{\pi_j}|X^{\tau+\ep_1} \ll  X^{\tau+\ep_1}.
\end{eqnarray*}
This finishes the proof of (\ref{kpi}) under the assumption in Theorem \ref{gencdt}. Hence, (\ref{piljpi}) holds under the assumption in Theorem \ref{gencdt} on intermediate fields.

\begin{rem}\label{specialub}
 If $\rho_j$ is faithful and $L(s, \rho_j, \widetilde{K_1}/k)=L(s, \rho_j, \widetilde{K_2}/k)$, our conclusion would be $\widetilde{\widetilde{K_1}}=\widetilde{\widetilde{K_2}}$ by \cite[Theorem 6]{KN16}, leading to finitely many choices of the extension $K/k$ that share the same $L$-function. The upper bound would then be $\ll 1$ ($\tau=0$). This is the case when $G=C_n$, $n$ prime. For $G=C_n$, $n$ composite, we show $\tau=0$ in Section \ref{ublb} by more delicate means.
\end{rem}

\end{subsection}

\begin{subsection}{Proof of Theorem \ref{gencdt}}\label{pfgencdt}
For any $\ep>0$, we let $\kappa=\delta/8=\frac{\ep}{2(c_{j,1}+\frac{|G|}{2}c_{j,2})}$, where $\delta$ is defined in (\ref{smalldelta}) and $c_{j,1},c_{j,2}$ defined in (\ref{cj1cj2}). Let $K \in Z_n^\cF(k,G;X)$ be a field such that $\zeta_{\widetilde{K}}/\zeta_k$ has no zeros in the region (\ref{lkzf2}). The hypothesis of Theorem \ref{gencdt} and Section \ref{genub} show that (\ref{piljpi}) holds, so by Proposition \ref{ch4}, all but $\ll X^{\tau+\ep}$ fields in the family $Z_n^\cF(k,G;X)$ satisfy this property, for any $0<\ep<1$. For such a $K$, define $E(x):=x^{-\kappa}+T^{-\frac{1}{24}}e^{-\frac{1}{24}\sqrt{c_4(\log x)/n_{\widetilde{K}}}}+T^{-\frac{1}{24}}e^{-\frac{1}{24}\frac{c_4 \log x}{\log D_{\widetilde{K}}}}$ to be the term appearing in Theorem \ref{cdtzf}, where $T=Q(\log Q)^{24}$, $Q=X^{|G|/2}$. Since $T>Q=X^{|G|/2} \gg D_{\widetilde{K}}$ by Lemma \ref{disccomp}, 
\begin{equation*}
E(x) \ll E'(x):=x^{-\kappa}+D_{\widetilde{K}}^{-\frac{1}{24}}e^{-\frac{1}{24}\sqrt{c_4(\log x)/n_{\widetilde{K}}}}+D_{\widetilde{K}}^{-\frac{1}{24}}e^{-\frac{1}{24}\frac{c_4 \log x}{\log D_{\widetilde{K}}}}.
\end{equation*}
It suffices to prove that for another absolute constant $c_3>0$, and for $K$ as above,
\begin{equation*}
E'(x) \ll
\begin{cases}
x^{-\kappa} & \text{if } (\log D_{\widetilde{K}})^{2/\kappa} \le x<D_{\widetilde{K}}^{1/(24\kappa)}, \\
 \frac{1}{\exp(c_3(\log x)^{1/2}|G|^{-1/2}n_k^{-1/2})} & \text{if } x \ge D_{\widetilde{K}}^{1/(24\kappa)}.
\end{cases}
\end{equation*}
For $(\log D_{\widetilde{K}})^{2/\kappa} \le x<D_{\widetilde{K}}^{1/(24\kappa)}$, direct computation shows $E'(x) \ll x^{-\kappa}$. For $\frac{1}{24\kappa}\log D_{\widetilde{K}} \le \log x \le c_4^{-1}|G|n_k(\log D_{\widetilde{K}})^2$, computation shows $E'(x) \ll D_{\widetilde{K}}^{-\frac{1}{24}} \ll e^{-\frac{1}{24}\sqrt{c_4(\log x)|G|^{-1}n_k^{-1}}}$. For $\log x \ge c_4^{-1}|G|n_k(\log D_{\widetilde{K}})^2$, we have $E'(x) \ll e^{-\frac{1}{24}\sqrt{c_4(\log x)|G|^{-1}n_k^{-1}}}+e^{-\frac{c_4\log x}{24\log D_{\widetilde{K}}}} \ll e^{-\frac{1}{24}\sqrt{c_4(\log x)|G|^{-1}n_k^{-1}}}$. Theorem \ref{gencdt} then follows with $c_3=\frac{1}{24}\sqrt{c_4}$.
\end{subsection}

\begin{subsection}{Proof of Theorem \ref{cdt}}\label{ublb}

To deduce Theorem \ref{cdt} from the very general ``meta theorem'' of Theorem \ref{gencdt}, we choose $G=C_n$ and $\cF$ comprising of all generators of $G$. The strong Artin conjecture is true for $G$ by \cite{Art31}. Theorem \ref{gencdt} directly gives the result of Theorem \ref{cdt} once we prove that $\tau=0$ and $\beta=\frac{1}{n-1}$ in this specific case. To do so, we follow \cite{Wr89} and \cite{PTW17}, and insert our ramification restriction in the argument.

By class field theory, $G$-extensions $K$ of $k$ correspond to open subgroups $U$ of the id\`{e}le class group $C_k=I_k/k^\times$ such that $\mathrm{Gal}(K/k) \cong C_k/U$, where $I_k$ is the group of id\`{e}les of $k$. Moreover, the relative discriminant of $K/k$ may be expressed in terms of conductors of characters of $C_k/U$.

Define the Dirichlet series
\begin{equation*}
A(s)=\sum_{m \ge 1} \frac{a_m}{m^s}=\sum_{K \in Z_n^\cF(k,G)} \frac{1}{\cD(K/k)^s}
\end{equation*}
where $\cD(K/k)=\mathrm{Nm}_{k/\Q}(\mathrm{Disc}(K/k))$ and $a_m$ is the number of fields $K \in Z_n^\cF(k,G)$ up to isomorphism such that $\cD(K/k)=a_m$,
and define the generating series of conductors by
\begin{equation*}
F(s)=\sum_{\chi \in C_n(C_k)} \Phi_G(\chi,s), \text{ where } \Phi_G(\chi,s)=\prod_{0 \le a <n} \frac{1}{\mathrm{Cond}(\chi^a)^s}
\end{equation*}
where $C_n(C_k)$ is the group of all continuous characters $\chi$ of $C_k$ (regarded as characters of $I_k$ that are trivial on $k^\times$) such that $\chi_v$ is trivial for all but one place $v_0$ and $\chi_{v_0}^n=1$, $\chi_{v_0}^m \neq 1$ for any $1 \le m<n$. 
Then
\begin{equation*}
F(s)=\Phi(n)A(s)
\end{equation*}
since one group $U$ corresponds to $\Phi(n)$ characters $\chi \in C_n(C_k)$ such that $\ker(\chi)=U$. By transferring to the context of conductors, we will have Euler products. We let $S$ be a finite set of places of $k$ that includes all infinite places, finite places dividing $n$, and finite places of $S$ that generate the class group of $k$. Let $C_{k,S}$ be the group of id\`{e}les which have components in $\cO_v^\times$ for all places $v \notin S$. Let $\cO_S=C_{k,S} \cap k^\times$ be the ring of $S$-integers of $k$ (elements of $k$ with non-negative valuation at all places not in $S$), then $\cO_S$ has class number 1. By \cite[Lemma 2.8]{Woo10}, we have an isomorphism
\begin{equation*}
C_{k,S}/\cO_S^\times \cong C_k.
\end{equation*}
We let $\cA_n(S)$ be a fixed finite set of representatives of $\cO_S/\cO_S^n$ and denote $a_n(S)=|\cA_n(S)|$. To sieve out those characters that vanish on $k^\times$, we use the delta function
\begin{equation*}
\delta_n(\chi)=\frac{1}{a_n(S)} \sum_{\ep \in \cA_n(S)} \chi(\ep),
\end{equation*}
which is 1 for $\chi(\cA_n(S))=1$ and is 0 otherwise. Then
\begin{equation}\label{newdelta}
F(s)=\sum_{\chi \in C_n(C_{k,S})} \delta_n(\chi) \Phi_G(\chi,s).
\end{equation}
We interchange the sum for $\chi$ and for $\ep$ in the sum (\ref{newdelta}) and obtain
\begin{equation*}
F(s)=\sum_{\ep \in \cA_n(S)} F(s,\ep)
\end{equation*}
where
\begin{equation*}
F(s,\ep)=\sum_{\chi \in C_n(C_{k,S})} \chi(\ep) \Phi_G(\chi,s).
\end{equation*}
The Euler factorization of $F(s,\ep)$ is
\begin{equation*}
F(s,\ep)=\prod_{v \in S} \sum_{\chi_v \in C_n(k_v^\times)} \chi_v(\ep_v) \Phi_G(\chi_v,s) \times \prod_{v \notin S} \sum_{\chi_v \in C_n(\cO_v^\times)} \chi_v(\ep_v) \Phi_G(\chi_v,s) =: \prod_v F_v(s,\ep).
\end{equation*}
There are finitely many places in $S$ and for each $v \in S$, there are finitely many characters of $k_v^\times$ such that $\chi_v^n=1$. As a consequence, $\displaystyle{\prod_{v \in S} F_v(s,\ep)}$ is a polynomial of $s$. We denote the polynomial as $P(s,\ep)$.
Now it suffices to consider $F_v(s,\ep)$ for $v \notin S$.

Our ramification restriction enforces that for $\chi_v \in C_n(\cO_v^\times)$, $\chi_v(\cO_v^\times)$ is trivial or generates $G=C_n$. By Proposition 4.3 of \cite{Wr89}, for $v \notin S$ such that $\mathrm{Nm}(v) \equiv 1 (\mathrm{mod} \ n)$, we have
\begin{equation*}
F_v(s,\ep)=1+\Phi(n)\mathrm{Nm}(v)^{-(n-1)s}+O(\mathrm{Nm}(v)^{-2(n-1)s}).
\end{equation*}
For $v \notin S$ such that $\mathrm{Nm}(v) \not\equiv 1 (\mathrm{mod} \ n)$, we have $F_v(s,\ep)=1$.
Thus,
\begin{equation*}
F(s)=\sum_{\ep \in \cA_n(S)} F(s,\ep)=\sum_{\ep \in \cA_n(S)} P(s,\ep) \prod_{\substack{v \notin S \\ \mathrm{Nm}(v) \equiv 1 (\mathrm{mod} \ n)}}(1+\Phi(n)\mathrm{Nm}(v)^{-(n-1)s}+O(\mathrm{Nm}(v)^{-2(n-1)s})).
\end{equation*}
Therefore, $a_m \ll_n \Phi(n)^{n\omega(m)} \ll_{n,\ep} m^\ep$ for any $\ep>0$. 

We are ready to verify the assumption in Theorem \ref{gencdt} on intermediate fields for our family $Z_n^\cF(k,C_n)$ with $\tau=0$.
For a fixed nontrivial extension $F$ over $k$, and for $K \in Z_n^\cF(k,C_n)$ such that $\widetilde{K}$ is an extension of $F$, we know by Lemma \ref{69} that every prime $\cP \subset \cO_k$ dividing $\mathrm{Disc}(K/k)$ must divide $\mathrm{Disc}(F/k)$ and the number $\alpha$ such that $\cP^\alpha \| \mathrm{Disc}(K/k)$ is uniquely determined. Thus, the contribution to $\cD(K/k)$ from tamely ramified primes not in $S$ is fixed. Since $a_m \ll_{n,\ep} m^\ep$, we learn that the assumption holds with $\tau=0$.

Now we consider the lower bound for the number of fields in our family. We write, for $\Re(s)>1$,
\begin{equation*}
B(s)=\prod_{\chi} \left( \sum_{\mathfrak{m}} \frac{\chi(\mathrm{Nm(\mathfrak{m})})}{(\mathrm{Nm(\mathfrak{m})})^s} \right)=\prod_{\chi} \left( \prod_v (1-\chi(\mathrm{Nm}(v))(\mathrm{Nm}(v))^{-s})^{-1} \right),
\end{equation*}
where $\chi$ runs over all Dirichlet characters modulo $n$, $\mathfrak{m}$ runs over all ideals of $\cO_k$ coprime to the prime ideals in $S$, and $v$ runs over all prime ideals of $\cO_k$ not in $S$. Then $B(s)$ has an analytic continuation to $\C$ with the only simple pole at $s=1$ (see e.g., \cite[p.129]{IK04}). Moreover,
\begin{equation*}
B(s)=\prod_v \mu_v(s)^{-1}
\end{equation*}
where
\begin{eqnarray*}
\mu_v(s)&=&1-\sum_\chi \chi(\mathrm{Nm}(v))\mathrm{Nm}(v)^{-s}+O(\mathrm{Nm}(v)^{-2s}) \\
&=& \begin{cases}
1-\Phi(n)\mathrm{Nm}(v)^{-s}+O(\mathrm{Nm}(v)^{-2s}) & \text{ if } \mathrm{Nm}(v) \equiv 1 (\mathrm{mod} \ n), \\
1+O(\mathrm{Nm}(v)^{-2s}) & \text{ if }\mathrm{Nm}(v) \not\equiv 1 (\mathrm{mod} \ n).
\end{cases}
\end{eqnarray*}
Then $\frac{F(s)}{B((n-1)s)}$ is holomorphic on $\Re(s)>\frac{1}{2(n-1)}$. Thus, $F(s)$ can be analytically continued to $\Re(s)>\frac{1}{2(n-1)}$ with a simple pole at $\frac{1}{n-1}$. Moreover, $F(s)$ inherits a standard convexity estimate from $B(s)$; see, e.g., \cite[Lemma 5.2, Theorem 5.23]{IK04}. So, by a standard Tauberian theorem (see, e.g., \cite[Section 6.4]{Nar00}), we have
\begin{equation*}
|Z_n^\cF(k,G;X)|=c_nX^{\frac{1}{n-1}}+o(X^{\frac{1}{n-1}}),
\end{equation*}
for some constant $c_n$. This proves $\beta=\frac{1}{n-1}$ in the notation of Theorem \ref{gencdt}.

\end{subsection}

\end{section}

\begin{section}{Application to bounding $\ell$-torsion in class groups}

Let $K$ be a number field. Recall the $\ell$-torsion subgroup
\begin{equation*}
\Cl_K[\ell]=\{ [\mathfrak{a} ] \in \Cl_K: [\mathfrak{a} ]^\ell=\Id \}.
\end{equation*}
A trivial bound for the $\ell$-torsion subgroup derives from any upper bound for the class group. In particular,
$|\Cl_K[\ell]| \le |\Cl_K| \ll_{d,\ep} D_K^{1/2+\ep}$,
where $d=[K:\Q]$, by the Minkowski bound.
On the other hand,  the $\ell$-torsion conjecture states that $|\Cl_K[\ell]| \ll_{d,\ell,\ep} D_K^{\ep}$, for any $\ep>0$. 

In \cite[Proposition 3.1]{EV07}, Ellenberg and Venkatesh show that under the Generalized Riemann Hypothesis, for any $\ep>0$, one obtains
\begin{equation}\label{grh}
|\Cl_K[\ell]| \ll_{d,\ell,\ep} D_K^{\frac12-\frac{1}{2\ell(d-1)}+\ep}.
\end{equation}
We use our effective Chebotarev density theorem to prove the bound (\ref{grh}) unconditionally for almost all fields in the family $Z_n^\ast(k;X)=Z_n^\cF(k,C_n;X)$ considered in Theorem \ref{ltorsion}. 
Our method is analogous to that of Pierce, Turnage-Butterbaugh, and Wood; see \cite{PTW17}. 

We first state a ``meta theorem'' based on the very general setting summarized in Theorem \ref{gencdt}.
\begin{thm}\label{genltor}
Let $Z_n^\cF(k,G), \tau, \beta$ satisfy the conditions assumed in Theorem \ref{gencdt}. Then for every $\ell \in \Z_{\ge 1}$ and every $X \ge 1$, $0<\ep<1$, aside from at most $\ll_{\cF,\ep} X^{\tau+\ep}$ possible exceptions, each field $K \in Z_n^\cF(k,G;X)$ has the property that
\begin{equation*}
|\Cl_K[\ell]| \ll_{n,n_k,D_k,\ell,|G|,\ep} D_K^{\frac12-\frac{1}{2\ell(n-1)}+\ep}.
\end{equation*}
\end{thm}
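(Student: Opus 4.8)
The plan is to run the Ellenberg--Venkatesh argument \cite[Proposition~3.1]{EV07}, in the form adapted to families of fields in \cite{PTW17}, now relative to the base field $k$. First I would dispose of small discriminants: by Hermite's theorem there are only finitely many $K/\Q$ of degree $nn_k$ with $D_K$ below any fixed bound, so the asserted inequality is trivial (its right-hand side being $\gg 1$) whenever $D_K$ is at most a constant depending only on $n,n_k,D_k,\ell,|G|,\ep$, and I may therefore assume $D_K$ exceeds such a constant. Then I would invoke Theorem \ref{gencdt} directly: under the standing hypotheses (shrinking $\ep$ if necessary, which only weakens the conclusion), aside from $\ll_{\cF,\ep}X^{\tau+\ep}$ fields $K\in Z_n^\cF(k,G;X)$, namely the exceptional fields of Theorem \ref{gencdt}, the pair $(\widetilde K/k,\{1\})$ obeys the effective Chebotarev density estimate of Theorem \ref{gencdt} with $\kappa=\kappa(n,n_k,|G|,\ep)$. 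I will prove the torsion bound for these non-exceptional fields; since by hypothesis there are $\gg X^\beta$ fields in all and $\beta>\tau$, the exceptional set has density zero.

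Fix a non-exceptional $K$ and set $x=D_K^{1/(2\ell(n-1))}$. Using the discriminant tower $D_{\widetilde K}=D_K^{[\widetilde K:K]}\,\mathrm{Nm}_{K/\Q}(\mathrm{Disc}(\widetilde K/K))\ge D_K$ together with $D_{\widetilde K}\ll D_K^{|G|/2}$ from Lemma \ref{disccomp}, and the smallness of $\kappa$ for small $\ep$, I would check that $x$ lies in the window $(\log D_{\widetilde K})^{2/\kappa}\le x<D_{\widetilde K}^{1/(24\kappa)}$ of Theorem \ref{gencdt} once $D_K$ is large. There the error term is $\ll\tfrac1{|G|}x^{1-\kappa}=o\!\left(\tfrac1{|G|}x/\log x\right)$, so
\[ \pi_{\{1\}}(x,\widetilde K/k)\ge\frac{1}{|G|}\pi(x)-O\!\left(\frac{x^{1-\kappa}}{|G|}\right)\gg_{|G|}\frac{x}{\log x}. \]
Every prime $\fp\subseteq\cO_k$ of norm at most $x$ splitting completely in $\widetilde K$ splits completely in $K$, and hence contributes $n$ prime ideals $\mathfrak P\subseteq\cO_K$ with $f(\mathfrak P/\fp)=1$ and $\mathrm{Nm}_{K/\Q}(\mathfrak P)\le x$. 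Therefore $K$ has at least $\gg D_K^{1/(2\ell(n-1))-\ep}$ prime ideals of residue degree $1$ over $k$ and absolute norm $\le D_K^{1/(2\ell(n-1))}$, with implied constant depending only on $n,n_k,|G|,\ell,\ep$.

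Finally I would feed this into the Ellenberg--Venkatesh inequality \emph{relative to $k$}: if $[K:k]=n$ and $K$ has at least $D_K^{\delta_0}$ prime ideals of residue degree $1$ over $k$ and absolute norm $\le D_K^{1/(2\ell(n-1))}$, then $|\Cl_K[\ell]|\ll_{n,n_k,D_k,\ell,\ep}D_K^{1/2-\delta_0+\ep}$. This is the relative analogue over $k$ of the lemma underlying \cite[Proposition~3.1]{EV07}, used over $\Q$ in \cite{PTW17}; the appearance of $n-1=[K:k]-1$ (rather than $nn_k-1=[K:\Q]-1$) is exactly what the residue-degree-one-over-$k$ hypothesis buys, since it permits carrying out the geometry-of-numbers step inside $\cO_K$ viewed as a rank-$n$ lattice over $\cO_k$, the dependence on $D_k$ entering through the relation (\ref{reldisc}). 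Taking $\delta_0=\tfrac1{2\ell(n-1)}-\ep$, which is legitimate by the previous paragraph, gives $|\Cl_K[\ell]|\ll D_K^{1/2-1/(2\ell(n-1))+\ep}$, as claimed.

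I anticipate the main work to be twofold. The first part is the bookkeeping of ranges and implied constants: one must check that the single cutoff $x=D_K^{1/(2\ell(n-1))}$ both lies in the range where Theorem \ref{gencdt} is unconditional with a genuine power-saving error (which forces $\kappa$, hence $\ep$, to be small in terms of $\ell$ and $n$) and is small enough that $\ell$-fold products of the produced primes remain in the geometry-of-numbers range required by Ellenberg--Venkatesh, all while ensuring that every implied constant depends only on $n,n_k,D_k,\ell,|G|,\ep$; here Lemma \ref{disccomp}, the crude bound $n_{\widetilde K}\le (n!)\,n_k$, and the discriminant tower formula are the needed inputs. The second, more structural, part is to formulate and prove the relative Ellenberg--Venkatesh lemma with exponent $[K:k]-1$: over $\Q$ this is classical, but over a general $k$ the geometry-of-numbers argument must be redone in $\cO_K$ relative to $\cO_k$, with the dependence on $D_k$ (equivalently, on $\cD(K/k)$ versus $D_K$) tracked explicitly.
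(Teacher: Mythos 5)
Your proposal is correct and follows essentially the same route as the paper: for the non-exceptional fields of Theorem \ref{gencdt} one applies the effective Chebotarev estimate to the class $\cC=\{1\}$ to produce $\gg \cD(K/k)^{\sigma}/\log D_K$ split primes of small norm (the paper's Proposition \ref{manyideals}) and feeds them into the Ellenberg--Venkatesh lemma. The only substantive difference is that the ``relative'' Ellenberg--Venkatesh inequality you anticipate having to prove is already available as \cite[Lemma 2.3]{EV07} (the paper's Lemma \ref{ev07}), which is stated for an arbitrary extension $K/k$ of degree $d$ with exponent $\tfrac{1}{2\ell(d-1)}$ and norm cutoff $\cD(K/k)^{\delta}$, so no new geometry-of-numbers argument over $\cO_k$ is needed---only the bookkeeping you already flag, namely taking the cutoff $\cD(K/k)^{\delta}$ with $\delta$ slightly below $\tfrac{1}{2\ell(n-1)}$ instead of $D_K^{1/(2\ell(n-1))}$, which costs only the $\ep$ in the exponent.
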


Theorem \ref{ltorsion} follows from Theorem \ref{genltor} by choosing $G=C_n$ and $\cF$ comprising of all generators of $G$, in which case $\tau=0,\beta=\frac{1}{n-1}$, and we know Theorem \ref{gencdt} holds unconditionally (Theorem \ref{cdt}).

Our approach to prove Theorem \ref{genltor} is similar to \cite[Theorem 7.2]{PTW17}.
We need the following lemma of Ellenberg and Venkatesh.

\begin{lemma}[Lemma 2.3 of \cite{EV07}]\label{ev07}
Suppose $K/k$ is an extension of number fields of degree $d$, let $\ell$ be a positive integer, and let $\delta<\frac{1}{2\ell(d-1)}$. Suppose that $\{p_1,\dots, p_M \}$ are prime ideals of $k$ of norm at most $\cD(K/k)^\delta=\mathrm{Nm}(\mathrm{Disc}(K/k))^\delta$ that are unramified and are not extensions of prime ideals from any proper subfield of $K$ containing $k$. Then
\begin{equation*}
|\mathrm{Cl}_K[\ell]| \ll_{[K:\Q],\ep,\ell} D_K^{\frac12+\ep}M^{-1}.
\end{equation*}
\end{lemma}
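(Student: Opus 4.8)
Since the final displayed statement is (essentially) Lemma 2.3 of Ellenberg--Venkatesh \cite{EV07}, the plan is to reproduce their argument. The aim is to show that \emph{every} class $c\in\Cl_K[\ell]$ has an integral representative of norm $\ll_{[K:\Q],\ell,\ep}D_K^{1/2+\ep}/M$ -- that is, to beat the Minkowski bound by a factor comparable to $M$ -- and then to compare with the standard count
\[
\#\{\mathfrak{a}\subseteq\cO_K\ \text{integral}:\ \mathrm{Nm}_{K/\Q}\mathfrak{a}\le Y\}\ \ll_{[K:\Q],\ep}\ D_K^{\ep}\,Y\qquad(Y\ge1),
\]
which follows from the elementary upper bound $\operatorname{Res}_{s=1}\zeta_K(s)\ll_{[K:\Q],\ep}D_K^{\ep}$. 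Summing the improved representatives over the classes in $\Cl_K[\ell]$ and feeding this count in would give $|\Cl_K[\ell]|\cdot M\ll_{[K:\Q],\ell,\ep}D_K^{1/2+\ep}$, which is exactly the asserted inequality.

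The mechanism for improving the Minkowski representative is the prescribed family $\{p_1,\dots,p_M\}$. First I would fix $c\in\Cl_K[\ell]$, choose for each $p_i$ a prime $\mathfrak{P}_i\subseteq\cO_K$ above it (so $\mathrm{Nm}_{K/\Q}\mathfrak{P}_i\le\cD(K/k)^{\delta}\le D_K^{\delta}$, the $\mathfrak{P}_i$ are unramified, and none is the extension of a prime from a proper field between $k$ and $K$), and start from an integral representative $\mathfrak{b}\in c$ with $\mathrm{Nm}_{K/\Q}\mathfrak{b}\ll_{[K:\Q]}D_K^{1/2}$. Since $c^{\ell}=1$, the ideal $\mathfrak{b}^{\ell}$ is principal; the next step is to pick a multi-exponent $(a_1,\dots,a_M)$ with $0\le a_i<\ell$ so that the principal ideal $\mathfrak{b}^{\ell}\prod_i\mathfrak{P}_i^{\,\ell a_i}$ has a generator $\gamma\in\cO_K$ whose archimedean size is as small as possible, and then to ``descend'': removing the common $\ell$-th-power part from $(\gamma)$ and from $\prod_i\mathfrak{P}_i^{\ell a_i}$ returns an integral ideal of $\cO_K$ in the class $c$ whose norm has been genuinely reduced by a power of the $\mathrm{Nm}\,\mathfrak{P}_i$'s. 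Quantitatively, the Minkowski start contributes $D_K^{\ell/2+\ep}$ and each $\mathfrak{P}_i$ a further $D_K^{\delta}$ to $|\mathrm{Nm}_{K/\Q}\gamma|$, which must be weighed against the covolume $\asymp R_K\sqrt{D_K}\gg\sqrt{D_K}$ of the unit lattice: the hypothesis $\delta<\tfrac1{2\ell(d-1)}$ (with $d=[K:k]$, the degree in the present statement, playing the role of $[K:\Q]$ in \cite{EV07}) is precisely the balance point that makes this gain survive.

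The step I expect to be the crux, and where the two hypotheses are used in an essential way, is bounding the \emph{multiplicity} of this construction -- equivalently, controlling how often two distinct $(c,\text{exponent})$-data can yield the same ideal, i.e. a relation $\mathfrak{b}^{\ell}\prod_i\mathfrak{P}_i^{\ell a_i}=\mathfrak{b}'^{\ell}\prod_i\mathfrak{P}_i^{\ell a_i'}$. Such a relation forces the auxiliary generator $\gamma$ to be an algebraic integer that is simultaneously very small in many archimedean places and has divisor supported on the $\mathfrak{P}_i$; the condition $\delta<\tfrac1{2\ell(d-1)}$ is what makes the totality of such $\gamma$ thin (indeed forces $\gamma=0$ unless it genuinely involves the $\mathfrak{P}_i$), while the ``not an extension of a prime from a proper subfield'' condition is exactly what prevents $\gamma$ from lying in a proper intermediate field $k\subseteq E\subsetneq K$, where short elements are abundant and would wreck the count. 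Assembling the three ingredients -- Minkowski to produce the starting representative, the ideal count as the terminal estimate, and the size/subfield constraints to keep the multiplicity $O_{[K:\Q],\ell,\ep}(D_K^{\ep})$ -- yields $|\Cl_K[\ell]|\ll_{[K:\Q],\ell,\ep}D_K^{1/2+\ep}M^{-1}$; this is the content of \cite[Lemma 2.3]{EV07}, and in the present paper one simply invokes that result.
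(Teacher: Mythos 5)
First, note that the paper does not prove this statement at all: it is quoted as Lemma~2.3 of \cite{EV07} and used as a black box, so a citation is all that is required, and you do acknowledge this at the end. The problem is that the reconstruction you offer in between is not a workable outline of the Ellenberg--Venkatesh argument. Its central intermediate claim --- that \emph{every} class $c\in\Cl_K[\ell]$ contains an integral ideal of norm $\ll D_K^{1/2+\ep}M^{-1}$, i.e.\ that one can beat Minkowski class by class by a factor $M$ --- is false and cannot follow from the hypotheses. Take $k=\Q$, $\ell=2$, $K$ imaginary quadratic of discriminant $-4pq$ with primes $p\asymp q\asymp D_K^{1/2}$: the ambiguous class attached to the reduced form $px^2+qy^2$ is $2$-torsion and its smallest integral ideal has norm $\min(p,q)\asymp D_K^{1/2}$, while (say under GRH, which only makes the hypotheses easier to satisfy) one may take $M\asymp D_K^{1/4-\ep}$ split primes of norm at most $D_K^{1/4}$, so the representative of norm $\ll D_K^{1/4+2\ep}$ that your plan requires simply does not exist. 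More structurally, the number of integral ideals of norm $\le Y$ in a fixed class is on average about $Y\cdot(\mathrm{Res}_{s=1}\zeta_K(s))/h_K$, so a classwise statement of the kind you posit is essentially equivalent to the inequality being proved and cannot serve as a stepping stone. The second gap sits exactly at the point you flag as the crux: choosing a generator $\gamma$ of $\mathfrak{b}^{\ell}\prod_i\mathfrak{P}_i^{\ell a_i}$ of small archimedean size. Generators are only determined up to units, the relevant covolume is the regulator $R_K$ (not ``$R_K\sqrt{D_K}$''), and $R_K$ can be as large as $D_K^{1/2+o(1)}$, so no choice of unit makes the archimedean absolute values of a generator comparable to $|\mathrm{Nm}_{K/\Q}\gamma|^{1/[K:\Q]}$ in general; consequently the ``balance point'' computation that is supposed to produce the threshold $\delta<\frac{1}{2\ell(d-1)}$ is never actually carried out.

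The argument of \cite{EV07} is organized differently: it is global in the $M$ primes rather than proceeding one torsion class at a time. The unramified/non-subfield hypothesis is used to guarantee that certain auxiliary elements built from the $\mathfrak{q}_i$ generate $K$ over $k$; the exponent $\frac{1}{2\ell(d-1)}$ then comes from playing the lower bound $\cD(K/k)\le\bigl|\mathrm{Nm}_{k/\Q}\,\mathrm{disc}_{K/k}(\gamma)\bigr|$ for such a generator against a height (Mahler-measure) upper bound of the shape $|\mathrm{disc}|\ll_d H(\gamma)^{2(d-1)}$, and the factor $D_K^{1/2+\ep}$ enters only once, through the class-number/Minkowski bound, with the saving $M^{-1}$ produced by a counting argument over all the primes simultaneously. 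For the purposes of this paper you should simply cite \cite[Lemma 2.3]{EV07} (as the author does); if you want to include a proof, it has to follow that global route, since the classwise strengthening you propose is false.
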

If we have $M$ prime ideals of $k$ of norm at most $\mathrm{Nm}(\mathrm{Disc}(K/k))^\delta$ that are unramified and split completely in $K$, then the condition in Lemma \ref{ev07} is satisfied and the trivial $\ell$-torsion bound for $K$ is improved by a factor $M^{-1}$. 

Theorem \ref{genltor} can be derived by combining our effective Chebotarev density Theorem \ref{gencdt} and Lemma \ref{ev07}. In particular, we prove the following proposition, analogous to \cite[Corollary 3.16]{PTW17}.

\begin{prop}\label{manyideals}
Let $Z_n^\cF(k,G), \tau, \beta$ satisfy the conditions assumed in Theorem \ref{gencdt}. Assume that there exists $\tau \ge 0$ such that for every $X \ge 1$, any $\ep_1>0$, and for certain nontrivial field extensions $F/k$ (described in Section \ref{genub}), there are $\ll X^{\tau+\ep_1}$ fields in $Z_n^\cF(k,G;X)$ that are extensions of $F$. 
Then for any $\sigma>0$ and any $0<\ep<1$, there exists a constant $D_0$ such that except for at most $\ll X^{\tau+\ep}$ fields, every field $K \in Z_n^\cF(k,G;X)$ with $D_K \ge D_0$ has the property that for any fixed conjugacy class $\cC$ of $G$,
\begin{equation*}
\pi_\cC(\cD(K/k)^\sigma,\widetilde{K}/k) \gg_{|G|,n,n_k,D_k,\sigma} \frac{D_K^\sigma}{\log D_K}.
\end{equation*}
\end{prop}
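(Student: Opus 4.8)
The plan is to read off Proposition \ref{manyideals} from the effective Chebotarev density estimate of Theorem \ref{gencdt}, by substituting $x=\cD(K/k)^\sigma$ into its conclusion and checking that the main term $\frac{|\cC|}{|G|}\pi(x)$ swamps the error once $D_K$ is large. First I would fix $\sigma>0$ and $0<\ep<1$ and apply Theorem \ref{gencdt} with this $\ep$: it supplies an exponent $\kappa=\kappa(n,n_k,|G|,\ep)>0$ and, for every $X\ge 1$, a ``good set'' consisting of all but $\ll_\ep X^{\tau+\ep}$ of the fields $K\in Z_n^\cF(k,G;X)$, each of which satisfies the displayed two-range bound for $\bigl|\pi_\cC(x,\widetilde{K}/k)-\frac{|\cC|}{|G|}\pi(x)\bigr|$ for every conjugacy class $\cC\subseteq G$. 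From here on I restrict to $K$ in the good set, which is permissible because the remaining fields already number $\ll_\ep X^{\tau+\ep}$, matching the exceptional set claimed in the proposition.

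Next I would locate $x=\cD(K/k)^\sigma$ inside the two ranges of Theorem \ref{gencdt}. By the relative discriminant formula (\ref{reldisc}), $\cD(K/k)=D_KD_k^{-n}$, so $x\asymp_{n,D_k,\sigma}D_K^\sigma$; and by Lemma \ref{disccomp}, $D_{\widetilde{K}}\ll D_K^{|G|/2}$, so $(\log D_{\widetilde{K}})^{2/\kappa}\ll_{|G|,\kappa}(\log D_K)^{2/\kappa}$ is merely polylogarithmic in $D_K$. Since $x$ is a fixed positive power of $D_K$ up to constants, there is a threshold $D_1=D_1(|G|,n,n_k,D_k,\sigma,\ep)$ such that $D_K\ge D_1$ forces $x\ge(\log D_{\widetilde{K}})^{2/\kappa}$; the two ranges of Theorem \ref{gencdt} then jointly cover $x$, so one of the two error bounds applies (and, for $D_0$ large, $D_{\widetilde K}\ge D_K$ is itself large, so the first range is genuinely nonempty).

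Then I would argue that the error is $o(\pi(x))$ in either case. By the prime ideal theorem for $k$ one has $\pi(x)\sim x/\log x$. In the first range the error is $\ll\frac{|\cC|}{|G|}x^{1-\kappa}=o(x/\log x)$; in the second range it is $\ll\frac{|\cC|}{|G|}\,x\exp\!\bigl(-c_3(\log x)^{1/2}|G|^{-1/2}n_k^{-1/2}\bigr)$, again $o(x/\log x)$ because $\exp\!\bigl(c_3(\log x)^{1/2}\cdots\bigr)$ outgrows every power of $\log x$. Hence there is $x_0=x_0(|G|,n,n_k,\ep)$ with $\pi_\cC(x,\widetilde{K}/k)\ge\frac12\cdot\frac{|\cC|}{|G|}\pi(x)\ge\frac{1}{2|G|}\pi(x)$ whenever $x\ge x_0$. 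Finally I would choose $D_0\ge D_1$ large enough that $D_K\ge D_0$ guarantees $\cD(K/k)^\sigma\ge x_0$ (possible since $\cD(K/k)^\sigma\asymp_{n,D_k,\sigma}D_K^\sigma$), and conclude, using $\pi(x)\gg x/\log x$ together with $x\asymp D_K^\sigma$ and $\log x\asymp_{n,D_k,\sigma}\log D_K$, that $\pi_\cC(\cD(K/k)^\sigma,\widetilde{K}/k)\gg_{|G|,n,n_k,D_k,\sigma}D_K^\sigma/\log D_K$.

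There is no serious obstacle here beyond Theorem \ref{gencdt} itself; the argument is essentially bookkeeping. The one point demanding genuine care is uniformity: the thresholds $D_1,x_0,D_0$ and every implied constant must depend only on $|G|,n,n_k,D_k,\sigma,\ep$, never on the individual field $K$---this is precisely what makes the ``aside from $\ll X^{\tau+\ep}$ exceptions'' formulation meaningful---and it holds because the error term in Theorem \ref{gencdt} is itself uniform over the good set. I would state this explicitly and note that no new exceptional fields are introduced beyond those already discarded by Theorem \ref{gencdt}.
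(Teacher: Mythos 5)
Your proposal is correct and follows essentially the same route as the paper: apply Theorem \ref{gencdt} at $x=\cD(K/k)^\sigma$, use (\ref{reldisc}) and Lemma \ref{disccomp} to ensure $x$ exceeds the threshold $(\log D_{\widetilde K})^{2/\kappa}$ once $D_K$ is large, and note that the error term is dominated by the main term $\asymp x/\log x \asymp D_K^\sigma/\log D_K$, with the exceptional set being exactly the one discarded by Theorem \ref{gencdt}. The only cosmetic difference is that you phrase the main term via $\pi(x)$ and the prime ideal theorem while the paper writes it with $\mathrm{Li}$, which changes nothing.
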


\begin{proof}
Let $\kappa$ be the parameter as in Theorem \ref{gencdt}.
For fixed $\sigma>0$ and any $\ep^{'}>0$, there is a threshold $D_0^{'}$ such that for $D_K \ge D_0^{'}$,
\begin{equation*}
\cD(K/k)^{\ep^{'}} \ge (\log D_{\widetilde{K}})^{2/\kappa}.
\end{equation*}
This is clear once we have the formula of the relative discriminant (\ref{reldisc})
and Lemma \ref{disccomp}. By Theorem \ref{gencdt}, for every $X \ge 1$ and any $0<\ep<1$, aside from $\ll X^{\tau+\ep}$ exceptions, every field $K \in Z_n^\cF(k,G;X)$ with $D_K \ge D_0^{'}$ satisfies 
\begin{equation*}
\left| \pi_\cC(\cD(K/k)^\sigma,\widetilde{K}/k)-\frac{|\cC|}{|G|}\mathrm{Li}(\cD(K/k)^\sigma) \right| \le \frac{|\cC|}{|G|} \frac{\cD(K/k)^\sigma}{\exp(c_3(\sigma \log \cD(K/k))^{1/2}|G|^{-1/2}n_k^{-1/2})}.
\end{equation*}
There exists a threshold $D_1^{'}$ such that for $D_K \ge D_1^{'}$, 
\begin{equation*}
\frac{|\cC|}{|G|} \frac{\cD(K/k)^\sigma}{\exp(c_3(\sigma \log \cD(K/k))^{1/2}|G|^{-1/2}n_k^{-1/2})} \le \frac12 \frac{|\cC|}{|G|}\mathrm{Li}(\cD(K/k)^\sigma)
\end{equation*}
and 
\begin{equation*}
\frac12 \frac{|\cC|}{|G|}\mathrm{Li}(\cD(K/k)^\sigma) \gg_{|G|,n,D_k,\sigma} \frac{D_K^\sigma}{\log D_K}.
\end{equation*}
The bound in Proposition \ref{manyideals} is obtained by choosing $D_0=\max \{D_0^{'},D_1^{'} \}$.
\end{proof}

By Proposition \ref{manyideals}, we can choose $M=D_K^{\frac{1}{2\ell(d-1)}-\ep}$ for any $\ep>0$ in Lemma \ref{ev07}, obtaining the bound in Theorem \ref{genltor}.

\end{section}

\section*{Acknowledgement}

The contents in this paper are part of the author's Ph.D. dissertation at Duke University. The author thanks his advisor, Lillian Pierce, who suggested the topic and gave much guidance during the research, and Jesse Thorner for pointing an error in the previous version. The author also thanks Jayce Getz, Robert Lemke Oliver, Jiuya Wang, and Asif Zaman for helpful discussions.

\footnotesize{}

\bibliographystyle{alpha}

\bibliography{NoThBibliography}

\newcommand{\etalchar}[1]{$^{#1}$}
\begin{thebibliography}{PTBW21}

\bibitem[AC89]{AC89}
James Arthur and Laurent Clozel.
\newblock {\em Simple algebras, base change, and the advanced theory of the
  trace formula}.
\newblock Princeton University Press, 1989.

\bibitem[An20]{An18}
Chen An.
\newblock $\ell$-torsion in class groups of certain families of ${D}_4$-quartic
  fields.
\newblock {\em Journal de Th{\'e}orie des Nombres de Bordeaux}, 32(1):1--23,
  2020.

\bibitem[An22]{An22}
Chen An.
\newblock A generalization of {G}raham's estimate on the {B}arban-{V}ehov
  problem.
\newblock {\em arXiv preprint arXiv:2206.10104}, 2022.

\bibitem[Art31]{Art31}
Emil Artin.
\newblock Zur theorie der {$L$}-reihen mit allgemeinen gruppencharakteren.
\newblock In {\em Abhandlungen aus dem Mathematischen Seminar der
  Universit{\"a}t Hamburg}, volume 8, no.1, pages 292--306. Springer, 1931.

\bibitem[BH97]{BH97}
Colin~John Bushnell and Guy Henniart.
\newblock An upper bound on conductors for pairs.
\newblock {\em journal of number theory}, 65(2):183--196, 1997.

\bibitem[BM18]{BM18}
Farrell Brumley and Djordje Mili{\'c}evi{\'c}.
\newblock Counting cusp forms by analytic conductor.
\newblock {\em arXiv preprint arXiv:1805.00633}, 2018.

\bibitem[Bom87]{Bom87}
Enrico Bombieri.
\newblock Le grand crible dans la th{\'e}orie analytique des nombres.
\newblock {\em Soc. Math. de France, Asterisque}, 18, 1987.

\bibitem[Bru06]{Bru06}
Farrell Brumley.
\newblock Effective multiplicity one on ${G}{L}$({$n$}) and narrow zero-free
  regions for {R}ankin-{S}elberg {$L$}-functions.
\newblock {\em American Journal of Mathematics}, 128(6):1455--1474, 2006.

\bibitem[BTZ{\etalchar{+}}21]{TZ18}
Farrell Brumley, Jesse Thorner, Asif Zaman, Colin~J Bushnell, and Guy Henniart.
\newblock Zeros of {R}ankin-{S}elberg ${L}$-functions at the edge of the
  critical strip.
\newblock {\em Journal of the European Mathematical Society}, 2021.

\bibitem[EV07]{EV07}
Jordan~S. Ellenberg and Akshay Venkatesh.
\newblock Reflection principles and bounds for class group torsion.
\newblock {\em International Mathematics Research Notices}, 2007, 2007.

\bibitem[FW18]{FW17}
Christopher Frei and Martin Widmer.
\newblock Average bounds for the $\ell$-torsion in class groups of cyclic
  extensions.
\newblock {\em Research in Number Theory}, 4(3):1--25, 2018.

\bibitem[Gal70]{Gal70}
Patrick~X Gallagher.
\newblock A large sieve density estimate near $\sigma$= 1.
\newblock {\em Inventiones mathematicae}, 11(4):329--339, 1970.

\bibitem[Har03]{Har03}
Gergely Harcos.
\newblock {\em New bounds for automorphic ${L}$-functions}.
\newblock ProQuest LLC, Ann Arbor, MI, 2003.
\newblock Thesis (Ph.D.)--Princeton University.

\bibitem[HT22]{HT20}
Peter Humphries and Jesse Thorner.
\newblock Towards a ${G}{L}_n$ variant of the {H}oheisel phenomenon.
\newblock {\em Transactions of the American Mathematical Society},
  375(03):1801--1824, 2022.

\bibitem[IK04]{IK04}
Henryk Iwaniec and Emmanuel Kowalski.
\newblock {\em Analytic number theory}, volume~53.
\newblock American Mathematical Soc., 2004.

\bibitem[IS00]{IS00}
Henryk Iwaniec and Peter Sarnak.
\newblock Perspectives on the analytic theory of ${L}$-functions.
\newblock In {\em Visions in Mathematics}, pages 705--741. Springer, 2000.

\bibitem[Jut78]{Jut78}
Matti Jutila.
\newblock On {L}innik's constant.
\newblock {\em Mathematica Scandinavica}, 41(1):45--62, 1978.

\bibitem[KM02]{KM02}
Emmanuel Kowalski and Philippe Michel.
\newblock Zeros of families of automorphic {$L$}-functions close to 1.
\newblock {\em Pacific journal of mathematics}, 207(2):411--431, 2002.

\bibitem[KN16]{KN16}
J{\"u}rgen Kl{\"u}ners and Florin Nicolae.
\newblock Are number fields determined by {A}rtin {$L$}-functions?
\newblock {\em Journal of Number Theory}, 167:161--168, 2016.

\bibitem[Lai19]{Lai19}
Nicholas Jian~Hao Lai.
\newblock On the zero-free region of certain families of {D}edekind
  zeta-functions.
\newblock Master's thesis, The University of British Columbia (Vancouver),
  2019.

\bibitem[Lan80]{Lan80}
Robert~P. Langlands.
\newblock {\em Base change for {${\rm GL}(2)$}}.
\newblock Annals of Mathematics Studies, No. 96. Princeton University Press,
  Princeton, N.J.; University of Tokyo Press, Tokyo, 1980.

\bibitem[LO77]{LO75}
J.~C. Lagarias and A.~M. Odlyzko.
\newblock Effective versions of the {C}hebotarev density theorem.
\newblock In {\em Algebraic number fields: {$L$}-functions and {G}alois
  properties ({P}roc. {S}ympos., {U}niv. {D}urham, {D}urham, 1975)}, pages
  409--464, 1977.

\bibitem[LOT19]{LT15}
Robert~J. Lemke~Oliver and Jesse Thorner.
\newblock Effective log-free zero density estimates for automorphic
  {$L$}-functions and the {S}ato-{T}ate conjecture.
\newblock {\em Int. Math. Res. Not. IMRN}, (22):6988--7036, 2019.

\bibitem[LOTZ20]{LOTZ}
Robert~J. Lemke~Oliver, Jesse Thorner, and Asif Zaman.
\newblock An approximate form of {A}rtin's holomorphy conjecture and
  non-vanishing of artin ${L}$-functions.
\newblock {\em arXiv preprint arXiv:2012.14422}, 2020.

\bibitem[Mar03]{Mar03}
Kimball Martin.
\newblock A symplectic case of {A}rtin's conjecture.
\newblock {\em Math. Res. Lett.}, 10(4):483--492, 2003.

\bibitem[Mon69]{Mon69}
Hugh~L. Montgomery.
\newblock Zeros of {$L$}-functions.
\newblock {\em Invent. Math.}, 8:346--354, 1969.

\bibitem[Mon71]{Mon71}
Hugh~L. Montgomery.
\newblock {\em Topics in multiplicative number theory}.
\newblock Lecture Notes in Mathematics, Vol. 227. Springer-Verlag, Berlin-New
  York, 1971.

\bibitem[Mor73]{Mor73}
Carlos~Julio Moreno.
\newblock The {H}oheisel phenomenon for generalized {D}irichlet series.
\newblock {\em Proceedings of the American Mathematical Society}, 40(1):47--51,
  1973.

\bibitem[MW89]{MW89}
C.~Moeglin and J.~L. Waldspurger.
\newblock Poles des fonctions {$L$} de paires pour {$GL(N)$}, appendice.
\newblock {\em Ann. Sci. {\'E}cole Norm. Sup.(4)}, 22:667--674, 1989.

\bibitem[Nar00]{Nar00}
Wladyslaw Narkiewicz.
\newblock {\em The {D}evelopment of {P}rime {N}umber {T}heory: {F}rom {E}uclid
  to {H}ardy and {L}ittlewood}.
\newblock Springer Science \& Business Media, 2000.

\bibitem[Pra57]{Pra57}
Karl Prachar.
\newblock {\em Primzahlverteilung}.
\newblock Springer-Verlag, Berlin-G\"{o}ttingen-Heidelberg, 1957.

\bibitem[PTBW20]{PTW17}
Lillian~B. Pierce, Caroline~L. Turnage-Butterbaugh, and Melanie~Matchett Wood.
\newblock An effective {C}hebotarev density theorem for families of number
  fields, with an application to {$\ell$}-torsion in class groups.
\newblock {\em Invent. Math.}, 219(2):701--778, 2020.

\bibitem[PTBW21]{PTW19}
Lillian~B. Pierce, Caroline~L. Turnage-Butterbaugh, and Melanie~Matchett Wood.
\newblock On a conjecture for {$\ell$}-torsion in class groups of number
  fields: from the perspective of moments.
\newblock {\em Math. Res. Lett.}, 28(2):575--621, 2021.

\bibitem[Sel72]{Sel72}
Atle Selberg.
\newblock Remarks on sieves.
\newblock In {\em Proceedings of the {N}umber {T}heory {C}onference ({U}niv.
  {C}olorado, {B}oulder, {C}olo., 1972)}, pages 205--216, 1972.

\bibitem[Ser81]{Ser81}
Jean-Pierre Serre.
\newblock Quelques applications du th{\'e}oreme de densit{\'e} de {C}hebotarev.
\newblock {\em Publications Math{\'e}matiques de l'Institut des Hautes
  {\'E}tudes Scientifiques}, 54(1):123--201, 1981.

\bibitem[Tun81]{Tun81}
Jerrold Tunnell.
\newblock Artin's conjecture for representations of octahedral type.
\newblock {\em Bulletin of the American Mathematical Society}, 5(2):173--175,
  1981.

\bibitem[TZ21]{TZ19}
Jesse Thorner and Asif Zaman.
\newblock An unconditional {${\rm GL}_n$} large sieve.
\newblock {\em Adv. Math.}, 378:Paper No. 107529, 24, 2021.

\bibitem[TZ22]{TZ19b}
Jesse Thorner and Asif Zaman.
\newblock A zero density estimate for {D}edekind zeta functions.
\newblock {\em International Mathematics Research Notices}, 03 2022.
\newblock rnac015.

\bibitem[Wan21]{Wan21}
Jiuya Wang.
\newblock Pointwise bound for $\ell$-torsion in class groups: Elementary
  abelian extensions.
\newblock {\em Journal f{\"u}r die reine und angewandte Mathematik (Crelles
  Journal)}, 2021(773):129--151, 2021.

\bibitem[Woo10]{Woo10}
Melanie~Matchett Wood.
\newblock On the probabilities of local behaviors in abelian field extensions.
\newblock {\em Compositio Mathematica}, 146(1):102--128, 2010.

\bibitem[Wri89]{Wr89}
David~J. Wright.
\newblock Distribution of discriminants of abelian extensions.
\newblock {\em Proc. London Math. Soc. (3)}, 58(1):17--50, 1989.

\end{thebibliography}

  \textsc{Department of Mathematics, Duke University, 120 Science Drive, Durham NC 27708 USA} \par  
  \textit{E-mail address}: \texttt{chen.an@duke.edu, chen.an.nku@gmail.com} \par
  \addvspace{\medskipamount}

\end{document}